\newtheorem{theorem}{Theorem}[section]
\newtheorem{lemma}[theorem]{Lemma}
\newtheorem{definition}{Definition}[section]
\renewcommand*\env@matrix[1][\arraystretch]{%
  \edef\arraystretch{#1}%
  \hskip -\arraycolsep
  \let\@ifnextchar\new@ifnextchar
  \array{*\c@MaxMatrixCols c}}
\journal{}
\begin{document}

\begin{frontmatter}

\title{Formulation of Entropy-Stable schemes for the multicomponent compressible Euler equations}

\author{Ayoub Gouasmi}
\author{Karthik Duraisamy}
\address{University of Michigan, Department of Aerospace Engineering, Ann Arbor, MI, USA}

\author{Scott M. Murman}
\address{NASA Ames Research Center, NASA Advanced Supercomputing Division, Moffett field, CA, USA}


\begin{abstract}
   In this work, Entropy-Stable (ES) schemes are formulated for the multicomponent compressible Euler equations. Entropy-conservative (EC) and ES fluxes are derived. Particular attention is paid to the limit case of zero partial densities where the structure required by ES schemes breaks down (the entropy variables are no longer defined). It is shown that while an EC flux is well-defined in this limit, a well-defined upwind ES flux requires appropriately averaged partial densities in the dissipation matrix. A similar result holds for the high-order TecNO reconstruction. However, this does not prevent the numerical solution from developing negative partial densities or internal energy. Numerical experiments were performed on one-dimensional and two-dimensional interface and shock-interface problems. The present scheme exactly preserves stationary interfaces. On moving interfaces, it produces spurious pressure oscillations typically observed with conservative schemes [Karni, \textit{J. Comput. Phys.}, 112 (1994) 1]. We find that these anomalies, which are not present in the single-component case, violate neither entropy stability nor a minimum principle of the specific entropy. Finally, we show that the scheme is able to reproduce the physical mechanisms of the two-dimensional shock-bubble interaction problem [Haas \& Sturtevant, J. Fluid Mech. 181 (1987) 41, Quirk \& Karni, J. Fluid Mech. 318 (1996) 129]. 
\end{abstract}
\begin{keyword}
Entropy-Stable \sep
Multicomponent Euler equations \sep
Finite volume method \sep
Nonlinear hyperbolic conservation law
\end{keyword}
\end{frontmatter}


\section{Introduction}
\indent Entropy-Stable (ES) schemes \cite{Tadmor} have been gaining interest over the past decade, especially in the context of under-resolved simulations of compressible turbulent flows using high-order accurate numerical methods \cite{HO, Diosady, Pazner, Fernandez_ES}. ES schemes are attractive because they provide stability in an integral sense: the total entropy of the discrete system can be made non-decreasing (2nd principle) or conserved in which case the scheme is termed Entropy-Conservative (EC). ES schemes were initially motivated by the fact that some hyperbolic systems of conservation laws:
\begin{equation}\label{eq:PDE_1D}
\frac{\partial \mathbf{u}}{\partial t}  + \frac{\partial \mathbf{f}}{\partial x}  = 0,
\end{equation}
where $\mathbf{u}$ and $\mathbf{f}$ are the state and flux vectors, admit a convex extension \cite{Friedrichs, Harten} in the sense that they imply an additional conservation equation for an \textit{entropy} $U$:
\begin{equation}\label{eq:Entropy}
    \frac{\partial U}{\partial t} + \frac{\partial F}{\partial x} = 0, 
\end{equation}
where $(U, \ F) = (U(\mathbf{u}), F(\mathbf{u})) \in \mathbb{R}^2$ is an entropy pair satisfying $U_{,\mathbf{u}}^T\mathbf{f}_{,\mathbf{u}} = F_{,\mathbf{u}}^T$ (for equation (\ref{eq:PDE_1D}) to imply equation (\ref{eq:Entropy})) and $U$ is convex. Entropy functions play an important role in the stability analysis of systems of conservation laws (see \cite{Tadmor_acta} and references therein). The entropy inequality:
\begin{equation}\label{eq:Entropy_prod}
    \frac{\partial U}{\partial t} + \frac{\partial F}{\partial x} < 0,
\end{equation}
considered in the sense of distributions, is typically used to eliminate non-physical weak solutions. For the compressible Euler equations, this leads to the well known entropy conditions which must be satisfied across a shock \cite{Lax}. Note however that the entropy inequality (\ref{eq:Entropy_prod}) alone is not enough to uniquely determine the correct weak solution \cite{NES1, NES2, NES3, Gouasmi1}. \\
\indent Tadmor \cite{Tadmor} introduced finite-volume discretizations of equation (\ref{eq:PDE_1D}) which are locally consistent with either equation (\ref{eq:Entropy}) or inequality (\ref{eq:Entropy_prod}) for a given entropy pair. This is achieved in two fundamental steps. First, an EC flux, which does not introduce any entropy production or loss, is derived by solving a scalar entropy conservation condition \cite{Tadmor}. Second, a dissipation operator is added to the EC flux in order to produce entropy across discontinuities. Despite some developments \cite{Barth, LeFloch, Tadmor_acta}, an issue that hindered the use of EC/ES schemes is the non-closed form of the first EC flux \cite{Tadmor} (path integral in phase space) in the general nonlinear systems case. Roe \cite{Roe1} addressed it by showing, for the compressible Euler equations, how to solve the EC condition algebraically to obtain a closed-form EC flux. Additionally, he proposed an upwind-like ES dissipation operator \cite{Roe1} that has become a default choice when constructing ES schemes. From there, a number a developments followed, mostly focused on high-order discretizations \cite{Fisher, Fried, Fjordholm, Pazner, Fernandez_ES}. To the best of the authors' knowledge, the most concrete improvement ES schemes brought about in Computational Fluid Dynamics (CFD) is in improving the robustness of under-resolved turbulent flow calculations using high-order methods \cite{Diosady, Pazner, Fernandez_ES}. Efforts are being spent on the development of code infrastructures which take advantage of the benefits of ES schemes. An example is the \textit{eddy} solver \cite{eddy0, Diosady} developed at NASA Ames Research Center for the simulation of turbulent separated flows. The present work is part of an effort to expand the field of ES schemes towards multi-physics applications \cite{eddy} and identify relevant research directions. \\
\indent In this work, we consider the multicomponent ($N$ species) compressible Euler equations consisting of the conservation equations for partial densities, momentum and total energy. This system can be viewed as the compressible Euler equations (conservation of total mass, momentum and total energy) complemented with $N-1$ species conservation equations. This observation motivated early multicomponent schemes such as the one by Habbal \textit{et al.} \cite{Habbal}, where the Roe scheme \cite{Roe} is applied to the Euler part of the equations and the $N-1$ remaining equations are treated separately. In Larrouturou \cite{Larrouturou}, such approaches are termed uncoupled as opposed to fully coupled approaches, which treat the multicomponent system as a whole. An example of a fully coupled approach is the extension of the Roe scheme by Fernandez \& Larroutouru \cite{Fernandez}, and Abgrall \cite{ Abgrall0}. It might be then tempting to use an existing entropy-stable scheme for the Euler part and evolve the remaining $N-1$ species equations with another scheme. A case in point can be found in Derigs \textit{et al.} \cite{Derigs} (section 3.8). While this approach has the benefit of simplicity (minimal programming and computational effort), it is lacking from a theoretical viewpoint. This approach implicitly assumes that the entropy of the single component system is an admissible entropy for the multicomponent system, meaning that it is a conserved quantity in the smooth regime and a convex function of the conserved variables. The necessity of a fully coupled approach to develop entropy-stable schemes, in the sense of Tadmor \cite{Tadmor}, for multicomponent flows is motivated by mathematical and physical arguments. \\
\indent At this juncture, it is important to note that the schemes we are interested in this work achieve entropy-stability in a specific way. That is to say that there is more than one way that a scheme can be made stable in an entropy sense, and hence be called 'entropy-stable'. Osher's family of E-schemes \cite{Osher} and Barth's space-time discontinuous galerkin schemes \cite{Barth} are conservative schemes which satisfy an entropy inequality, but their construction is different. There are also non-conservative schemes which can be designed to conserve or produce entropy \cite{Castro}. Entropy stability can be understood in a different way as well. The scheme introduced by Ma \textit{et al.} \cite{Ihme} is termed entropy-stable because it enforces a minimum principle of the specific entropy, proved by Tadmor for the Euler equations \cite{Tadmor}. In their work, stability is sought in a point-wise sense (a scheme which preserves the positivity of density and satisfies the minimum principle cannot crash in principle), not in an integral sense. Integral stability and point-wise stability are both important concepts, and in principle they do not imply each other. In either case, it is important to emphasize that the correct formulation of these schemes depends on the structure of the equations they are applied to. Chalot \textit{et al.} \cite{Chalot} and Giovangigli \cite{Giovangigli} demonstrated that the multicomponent compressible Euler equations do possess the structure that ES schemes require. Additionally, a minimum principle of the mixture's specific entropy was recently shown in Gouasmi \textit{et al.} \cite{Gouasmi4}. It is not clear to the authors whether these results extend to the systems considered in \cite{Ihme}. \\
\indent Integral entropy stability is not the only trait of ES schemes. An interesting feature of Tadmor's framework is that the amount of entropy produced by the scheme can be precisely quantified and controlled to some extent. Ismail \& Roe's work on the carbuncle problem \cite{Ismail, IsmailThesis} relies on this feature to delve into the question of appropriate entropy production across shocks. There is a sense of how entropy is being managed locally, and neither the choice of EC flux nor the choice of dissipation operator is unique. However, this freedom in the design of the scheme leaves the user the outstanding challenge of figuring out what is the right way to manage entropy in a discrete flow field. This is an extremely complex question, the scope of which is by far not limited to shocks, but also includes rarefactions \cite{Gouasmi1}, contact discontinuities (of relevance to this work) and the under-resolution of turbulence \cite{MurmanCTR}. The construction of our ES scheme follows a classic procedure \cite{Tadmor, Roe}: from the definition of the hyperbolic system, the entropy pair and the corresponding entropy variables, we derive the potential flux functions, a baseline EC flux \cite{Tadmor, Roe1} and a scaling matrix \cite{Merriam, Barth} needed to define the entropy-stable dissipation operator of Roe \cite{Roe1}. Hence, our work does not explore this question in depth, but it does provides a lucid view of how the current state of the art of ES schemes fares in multicomponent flows. \\
\indent We have come across a few obstacles during the course of this work. The first one is theoretical: the structure required by ES schemes collapses when one of the partial densities is zero. The entropy $U$, which we take as the opposite of the mixture's thermodynamic entropy \cite{Chalot, Giovangigli}, is no longer convex and the entropy variables, which are key in constructing ES schemes, are no longer defined. Upon closer examination, we observed that the EC flux we derived is well-defined in this limit and still satisfies the Entropy Conservation condition. We also found that the dissipation operator remains defined provided that the averaged partial densities involved in the dissipation matrix are evaluated in a certain way. The second issue is that while the overall scheme is always defined, there is no guarantee that it will not produce negative densities or pressure, even at first order, at the next time step. Last but not least, numerical experiments showed that while the ES scheme can handle shocks and stationary contact discontinuities correctly, it fails to preserve pressure equilibrium and constant velocity when a moving interface is simulated. This is in fact a longtime and well-known \cite{Abgrall, Karni} failure of conservative finite-volume schemes on multicomponent flows (there are no such anomalies in the single component case). We found that these anomalies neither violate entropy-stability nor a minimum principle on the specific entropy of the mixture \cite{Gouasmi4}. \\
\indent The paper is organized as follows. In section 2, we present the modeling assumptions, the governing equations and their symmetrization using the entropy variables \cite{Chalot, Giovangigli}. Section 3 is dedicated to the construction of ES schemes for multicomponent flows. We formulate an EC flux and an ES interface flux based on upwinding \cite{Roe1, Ismail} and discuss their definition in the limit of vanishing partial densities. In section 4, we discuss how this limit impacts standard high-order ES discretizations. In section 5, the numerical scheme is tested on one-dimensional and two-dimensional interface and shock-interface problems. 

\section{Governing equations, entropy variables and symmetrization}
\indent The governing equations describe the conservation of species mass, momentum and total energy. In 1D, that is system (\ref{eq:PDE_1D}) with the vector conserved variables $\mathbf{u}$ and the vector of fluxes $\mathbf{f}$ defined by:
\begin{equation*}
    \mathbf{u} := \begin{bmatrix} \rho_1 & \hdots & \rho_N & \rho u & \rho e^{t} \end{bmatrix}^T, \ 
    \mathbf{f} := \begin{bmatrix} \rho_1 u & \hdots & \rho_N u & \rho u^2 + p & (\rho e^{t} + p)u \end{bmatrix}^T.
\end{equation*}
$\rho_k$ denotes the partial density of species k, $\rho := \sum_{k=1}^N\rho_k$ denotes the total density, $u$ denotes the velocity and $e^t$ denotes the specific total energy. The pressure $p$ is given by the ideal gas law:
\begin{equation*}
p := \sum_{k=1}^N\rho_k  r_k T, \ r_k = \frac{R}{m_k},
\end{equation*}
where $m_k$ is the molar mass of species k and $R$ is the gas constant.  The temperature $T$ is determined by the internal energy $\rho e := \rho e^{t} - (\rho u)^2/(2\rho)$ which in this work is modeled following a calorically perfect gas assumption:
\begin{equation}\label{eq:T}
    \rho e = \sum_{k} \rho_k e_k, \ e_k := e_{0k} + c_{vk} T.
\end{equation}
For species k, $e_{0k}$ is a constant and $c_{vk}$ is the constant volume specific heat. $T$ is computed by solving equation (\ref{eq:T}). An additional conservation equation \cite{Chalot, Giovangigli} can be derived from the governing equations:
\begin{equation}\label{eq:PDEentropy}
    \frac{\partial \rho s}{\partial t} + \frac{\partial \rho u s}{\partial x} = 0.
\end{equation}
$\rho s$ is the thermodynamic entropy of the mixture given by:
\begin{gather*}
\rho s := \sum_{k=1}^N\rho_k s_k, \ 
s_k := c_{vk} \ln({T}) - r_k \ln (\rho_k),
\end{gather*}
where $s$ denotes the specific entropy of the mixture and $s_k$ denotes the specific entropy of species $k$. Equation (\ref{eq:PDEentropy}) can be rewritten in the form of equation (\ref{eq:Entropy}) with $(U, \ F) = (-\rho s, \ -\rho u s)$. For $\rho_k > 0$ and $T > 0$, $U$ is a convex function of the conserved variables and $(U, \ F)$ is a valid entropy pair for the multicomponent system \cite{Chalot, Giovangigli}. \\
\indent At this point, we draw the attention of the reader to the difference between \textit{thermodynamic} entropy $\rho s$ and \textit{mathematical} entropy $U$, which is a general concept proper to hyperbolic PDEs. For the Burgers equation for instance, $U = \frac{1}{2}u^2$ is an entropy. In our context, the mathematical entropy is the opposite of the thermodynamic entropy, and the statement of integral entropy stability can be interpreted either as dissipation of (mathematical) entropy or as production of (thermodynamic) entropy. We adopt the latter throughout this paper. \\
\indent For systems of conservation laws with an entropy pair, Mock \cite{Mock} proved that the mapping $\mathbf{u} \rightarrow \mathbf{v}$ where $\mathbf{v}$ denotes the entropy variables defined as:
\begin{equation*}
    \mathbf{v} := \bigg( \frac{\partial U}{\partial \mathbf{u}}\bigg)^T,
\end{equation*}
symmetrizes the original system. This means that, starting from the quasi-linear form of (\ref{eq:PDE_1D}):
\begin{equation}\label{eq:PDE_QL_1D}
    \frac{\partial \mathbf{u}}{\partial t} + A \frac{\partial \mathbf{u}}{\partial x} = 0, \ A := \frac{\partial \mathbf{f}}{\partial \mathbf{u}},
\end{equation}
where $A$ is the flux Jacobian, the change of variables $\mathbf{u} \rightarrow \mathbf{v}$ turns (\ref{eq:PDE_QL_1D}) into a system of the form:
\begin{equation}
    H\frac{\partial \mathbf{v}}{\partial t} + B \frac{\partial \mathbf{u}}{\partial x} = 0, \ H := \frac{\partial \mathbf{u}}{\partial \mathbf{v}}, \ B := \frac{\partial \mathbf{f}}{\partial \mathbf{v}} = A H,
\end{equation}
where $B$ and $H$ are symmetric, and $H$ is symmetric positive definite (such systems are called symmetric hyperbolic and are well appreciated in the analysis of PDEs \cite{Friedrichs, Giovangigli}). The matrices $A$ and $H$ will be derived in section \ref{sec:ES_flux}. For completeness, we mention that Godunov \cite{Godunov} showed that the converse of Mock's result holds as well, namely that if there exists a mapping which symmetrizes the system, then there exists a valid entropy pair. \\
\indent The fact that the entropy variables symmetrize the system (or equivalently, the fact that the matrix $H$ symmetrizes the flux Jacobian $A$ from the right) will be a key component in constructing the ES dissipation operator in section \ref{sec:ES_flux}. In addition to the entropy variables, the construction of ES schemes also involve potential functions $(\mathcal{U}, \mathcal{F})$ defined by:
\begin{equation*}
    \mathcal{U} := \mathbf{v} \cdot \mathbf{u} - U, \ \mathcal{F} := \mathbf{v} \cdot \mathbf{f} - F.
\end{equation*}
The potential functions satisfy the relationships:
\begin{equation*}
    \mathbf{u} = \bigg(\frac{\partial \mathcal{U}}{\partial \mathbf{v}}\bigg)^T, \ 
    \mathbf{f} = \bigg(\frac{\partial \mathcal{F}}{\partial \mathbf{v}}\bigg)^T.
\end{equation*}
In order to derive them, we must first derive the entropy variables. Following \cite{Chalot, Giovangigli}, we use a chain rule: 
\begin{equation*}
    \frac{\partial U}{\partial \mathbf{u}} = \frac{\partial U}{\partial Z} \bigg ( \frac{\partial \mathbf{u}}{\partial Z}\bigg)^{-1}, \ 
    Z := \begin{bmatrix} \rho_1 & \hdots & \rho_N & u & T \end{bmatrix}^T,
\end{equation*}
where $Z$ denotes the vector of primitive variables. The Gibbs identity can be written as:
\begin{equation}\label{eq:Gibbs}
    T d(\rho s) = d(\rho e) - \sum_{k=1}^Ng_k d\rho_k,
\end{equation}
where $g_k :=  h_k - T s_k$ is the Gibbs function of species k and $h_k := e_k + r_k T$ is the specific enthalpy of species $k$. We have:
\begin{equation}\label{eq:dE}
    d(\rho e) = \sum_{k=1}^Ne_k d \rho_k + \rho c_v dT, \ \rho c_v := \sum_{k=1}^N \rho_k c_{vk}.
\end{equation}
Combining equations (\ref{eq:dE}) and (\ref{eq:Gibbs}), one obtains:
\begin{equation*}
    d(\rho s) = \frac{1}{T} \bigg (\sum_{k=1}^N(e_k - g_k) d\rho_k + \rho c_v dT \bigg).
\end{equation*}
This gives:
\begin{equation}\label{eq:dSdZ}
    \frac{\partial U}{\partial Z} = \frac{1}{T} 
    \begin{bmatrix} (g_1 - e_1) & \hdots & (g_N - e_N) & 0 & -\rho c_v \end{bmatrix}.
\end{equation}
The Jacobian of the mapping $Z \rightarrow \mathbf{u}$ is given by:
\begin{equation}\label{eq:dudz}
    \frac{\partial \mathbf{u}}{\partial Z} = 
        \begin{bmatrix} 
            1 &        & 0 &    0   &    0   \\
              & \ddots &   & \vdots & \vdots \\
            0 &        & 1 &    0   &    0   \\
            u & \hdots & u &  \rho  &    0   \\
            e_{1} + \frac{1}{2}u^2 & \hdots & e_{N} + \frac{1}{2}u^2 & \rho u & \rho c_v 
        \end{bmatrix}.
\end{equation}
The inverse of this matrix is given by:
\begin{equation}\label{eq:dudz_inv}
    \bigg(\frac{\partial \mathbf{u}}{\partial Z}\bigg)^{-1} = 
        \begin{bmatrix} 
            1 &        & 0 &    0   &    0   \\
              & \ddots &   & \vdots & \vdots \\
            0 &        & 1 &    0   &    0   \\
            -u \rho^{-1} & \hdots & -u \rho^{-1} &  \rho^{-1}  &    0   \\
            (\frac{1}{2}u^2 - e_{1})(\rho c_v)^{-1} & \hdots & (\frac{1}{2}u^2 - e_{N})(\rho c_v)^{-1} &  -u (\rho c_v)^{-1} & (\rho c_v)^{-1} 
        \end{bmatrix}
\end{equation}
Combining equations (\ref{eq:dudz_inv}) and (\ref{eq:dSdZ}) yields the entropy variables \cite{Chalot, Giovangigli}:
\begin{equation}\label{eq:v}
    \mathbf{v} = \bigg(\frac{\partial U}{\partial \mathbf{u}}\bigg)^T = \frac{1}{T} 
    \begin{bmatrix} g_1 - \frac{1}{2} u^2 & \hdots & g_N - \frac{1}{2}u^2 & u & -1 \end{bmatrix}^T.
\end{equation}
From this expression, the potential functions $(\mathcal{U}, \ \mathcal{F})$ can be derived:
\begin{equation}\label{eq:fluxpotential}
    \mathcal{U} = \sum_{k=1}^Nr_k \rho_k , \ \mathcal{F} = \sum_{k=1}^Nr_k \rho_k u.
\end{equation}
\indent We conclude this section with two remarks:
\begin{enumerate}
    \item Denote $\mathbf{v} = [v_{1,k} \ \dots \ v_{1,N} \ v_2 \ v_3 ]^T$. In order to derive the mapping from entropy variables to primitive variables, one can first compute the temperature, velocity, gibbs functions and specific entropies as:
\begin{equation*}
    T := -\frac{1}{v_3}, \ u := -\frac{v_2}{v_3}, \ g_k := -\frac{v_{1,k}}{v_3} + \frac{1}{2}\bigg(\frac{v_2}{v_3}\bigg)^2, \ s_k(T,\rho_k) = c_{pk} - v_{1,k} + \frac{1}{2}\frac{v_2^2}{v_3} 
\end{equation*}
The partial densities are inferred from the specific entropies:
\begin{equation*}
    \rho_k := \exp{ \bigg( \frac{m_k}{R} (c_{vk}ln(T) - s_k) \bigg)} =
        \exp{ \Bigg( \frac{m_k}{R} \bigg(-c_{vk}ln(v_3) - c_{pk} + v_{1,k} - \frac{1}{2}\frac{v_2^2}{v_3} \bigg) \Bigg)}.
\end{equation*} 
    The requirement that $\rho_k > 0$ and $T > 0$ manifests in the definition of the entropy variables, which require the evaluation of $\ln(\rho_k)$ and $\ln(T)$. On the other hand, it is interesting to note that if one works with the entropy variables instead of the conservative variables, the requirement that $\rho_k > 0$ and $T > 0$ boils down to the single requirement that $v_3 < 0$. The remaining entropy variables can be of any sign in principle. The authors are not aware of any physical consideration which would impose the sign of $g_k - \frac{1}{2}u^2$, namely the difference between gibbs energy and kinetic energy.
    \item In the compressible Euler case with $e_{0k} = 0$, it is easy to show, using the ideal gas law $p = \rho r T$ and the Mayer relation $c_p - c_v = r$ that the vector of entropy variables (\ref{eq:v}) simplifies to :
    \begin{equation*}
        \mathbf{v} = r \begin{bmatrix} \frac{\gamma - \bar{s}}{\gamma - 1} - \frac{\rho u^2}{2p} & \frac{\rho u}{p} & -\frac{\rho}{p}
        \end{bmatrix}^T, \ \bar{s} = \ln p  - \gamma \ln \rho - \ln r.
    \end{equation*}
    This differs by a constant factor $r$ from the expression that is usually used when designing entropy stable schemes for the compressible Euler equations (see \cite{Barth, Roe1} for instance). The trivial difference comes from a different choice of entropy pair $(U,\ F)$.
\end{enumerate}

\section{Formulation}\label{sec:formulation}
The first subsection covers the definitions and fundamental results relevant to the construction of ES schemes. The following subsections cover the details relevant to the multicomponent system.
\subsection{Background}
Here, the subscript $j$ denotes the cell index, and the subscript $j + \frac{1}{2}$ refers to the interface between cell $j$ and $j+1$. We also use the notation $[.]_{j+\frac{1}{2}}$ to refer to the jump between cell $j+1$ and $j$. \\
\indent EC and ES schemes are defined as follows:
\begin{definition}[\cite{Tadmor}]
The finite volume scheme:
    \begin{equation}\label{eq:FVM}
\frac{d}{dt}\mathbf{u}_{j}(t) + \frac{1}{\Delta x}(\mathbf{f}_{j + \frac{1}{2}} - \mathbf{f}_{j - \frac{1}{2}}) = 0,
\end{equation}
where $\mathbf{f}_{j + \frac{1}{2}}$ is a consistent interface flux, is called entropy conservative if it satisfies the equation: 
 \begin{equation*}
\frac{d}{dt}U(\mathbf{u}_{j}(t)) + \frac{1}{\Delta x}[F_{j + \frac{1}{2}} - F_{j - \frac{1}{2}}] = 0,
\end{equation*}
where $F_{j+\frac{1}{2}}$ is a consistent entropy interface flux, and entropy stable if it satisfies the inequality:
\begin{equation}\label{eq:FVM_INEQ}
   \frac{d}{dt}U(\mathbf{u}_{j}(t)) + \frac{1}{\Delta x}[F_{j + \frac{1}{2}} - F_{j - \frac{1}{2}}] < 0. 
\end{equation}

\end{definition}
\indent The first step in the construction of an ES scheme is to construct an EC scheme, following:
\begin{theorem}[\cite{Tadmor}]
    The finite volume scheme (\ref{eq:FVM}) is entropy conservative if and only if the interface flux $\mathbf{f}_{j+\frac{1}{2}}$ satisfies the condition:
\begin{equation}\label{eq:ECcond0}
    [\mathbf{v}]_{j+\frac{1}{2}} \cdot \mathbf{f}_{j+\frac{1}{2}} = [\mathcal{F}]_{j+\frac{1}{2}},
\end{equation}
where $\mathcal{F}_{j} = \mathcal{F}(\mathbf{u}_j)$, is the potential function evaluated at cell $j$. In this case, $\mathbf{f}_{j+\frac{1}{2}}$ is called an entropy-conservative flux and the corresponding entropy flux $F_{j+\frac{1}{2}}$ is given by the formula:
\begin{equation*}
    F_{j+\frac{1}{2}} = \frac{1}{2}(\mathbf{v}_j + \mathbf{v}_{j+1}) \cdot \mathbf{f}_{j+\frac{1}{2}} - \frac{1}{2}(\mathcal{F}_j + \mathcal{F}_{j+1}).
\end{equation*}
\end{theorem}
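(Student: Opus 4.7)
The plan is to multiply the semi-discrete scheme (\ref{eq:FVM}) on the left by $\mathbf{v}_j^T$ and exploit the chain-rule identity $\mathbf{v}_j^T \frac{d\mathbf{u}_j}{dt} = \frac{d}{dt}U(\mathbf{u}_j)$, which is just the definition $\mathbf{v} = (\partial U/\partial \mathbf{u})^T$ applied cell-wise. This produces the basic working identity
\begin{equation*}
\frac{d}{dt}U(\mathbf{u}_j) = -\frac{1}{\Delta x}\mathbf{v}_j^T\bigl(\mathbf{f}_{j+\frac{1}{2}} - \mathbf{f}_{j-\frac{1}{2}}\bigr),
\end{equation*}
and the question reduces to whether the right-hand side is a telescoping difference of some consistent numerical entropy flux.

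For the ``if'' direction I would \emph{define} $F_{j+\frac{1}{2}}$ by the displayed formula and verify the claim directly. Consistency is immediate from $\mathcal{F} = \mathbf{v}\cdot\mathbf{f} - F$: when $\mathbf{u}_j = \mathbf{u}_{j+1} = \mathbf{u}$ the formula collapses to $\mathbf{v}\cdot\mathbf{f}(\mathbf{u}) - \mathcal{F}(\mathbf{u}) = F(\mathbf{u})$. I would then compute $F_{j+\frac{1}{2}} - F_{j-\frac{1}{2}}$, regroup the entropy-variable contributions into symmetric $\mathbf{v}_j$-pieces and jump-pieces $[\mathbf{v}]_{j\pm\frac{1}{2}}$, substitute the condition (\ref{eq:ECcond0}) to replace each jump piece by $[\mathcal{F}]_{j\pm\frac{1}{2}}$, and check that the remaining $\mathcal{F}_{j\pm 1}, \mathcal{F}_j$ terms cancel exactly, leaving $\mathbf{v}_j^T(\mathbf{f}_{j+\frac{1}{2}} - \mathbf{f}_{j-\frac{1}{2}})$. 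Combined with the working identity, this gives the discrete entropy equation.

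For the ``only if'' direction, assume the scheme is entropy conservative with some consistent two-point flux $F_{j+\frac{1}{2}} = F_{j+\frac{1}{2}}(\mathbf{u}_j,\mathbf{u}_{j+1})$. Equating the two expressions for $\frac{d}{dt}U(\mathbf{u}_j)$ gives
\begin{equation*}
\mathbf{v}_j\cdot\mathbf{f}_{j+\frac{1}{2}}(\mathbf{u}_j,\mathbf{u}_{j+1}) - F_{j+\frac{1}{2}}(\mathbf{u}_j,\mathbf{u}_{j+1}) = \mathbf{v}_j\cdot\mathbf{f}_{j-\frac{1}{2}}(\mathbf{u}_{j-1},\mathbf{u}_j) - F_{j-\frac{1}{2}}(\mathbf{u}_{j-1},\mathbf{u}_j).
\end{equation*}
The left-hand side is independent of $\mathbf{u}_{j-1}$ and the right of $\mathbf{u}_{j+1}$, so each side is a function only of $\mathbf{u}_j$; setting $\mathbf{u}_{j\pm 1} = \mathbf{u}_j$ and invoking consistency of both numerical fluxes evaluates this common function as $\mathcal{F}_j$. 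Repeating the argument with the scheme at cell $j+1$ multiplied by $\mathbf{v}_{j+1}^T$ yields the twin identity $\mathbf{v}_{j+1}\cdot\mathbf{f}_{j+\frac{1}{2}} - F_{j+\frac{1}{2}} = \mathcal{F}_{j+1}$. Subtracting these two identities produces the pointwise condition (\ref{eq:ECcond0}); adding them and halving recovers the stated formula for $F_{j+\frac{1}{2}}$.

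The main obstacle is the variable-separation step in the ``only if'' half, which relies on $F_{j+\frac{1}{2}}$ depending on exactly the same two arguments as $\mathbf{f}_{j+\frac{1}{2}}$. This stencil hypothesis is natural for a consistent two-point entropy flux, but it should be stated explicitly: without it, the pointwise characterization (\ref{eq:ECcond0}) would not follow, since a wider-stencil numerical entropy flux could, in principle, distribute the conservation balance differently between neighboring interfaces.
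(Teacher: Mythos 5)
Your argument is correct and is precisely the classical proof of Tadmor that the paper cites without reproducing: the ``if'' direction by defining $F_{j+\frac{1}{2}}$ via the displayed formula and checking the telescoping identity $F_{j\pm\frac{1}{2}} - \mathbf{v}_j\cdot\mathbf{f}_{j\pm\frac{1}{2}} = -\mathcal{F}_j$, and the ``only if'' direction by the separation-of-variables argument yielding the twin identities $\mathbf{v}_j\cdot\mathbf{f}_{j+\frac{1}{2}} - F_{j+\frac{1}{2}} = \mathcal{F}_j$ and $\mathbf{v}_{j+1}\cdot\mathbf{f}_{j+\frac{1}{2}} - F_{j+\frac{1}{2}} = \mathcal{F}_{j+1}$. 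Your closing caveat about the two-point stencil hypothesis on the numerical entropy flux is well taken and is indeed part of Tadmor's standing assumptions, so no gap remains.
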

For scalar PDEs, $\mathbf{v}$ is a scalar and the entropy conservation condition (\ref{eq:ECcond0}) has only one solution $\mathbf{f}_{j+\frac{1}{2}} = [\mathcal{F}]_{j+\frac{1}{2}}/[\mathbf{v}]_{j+\frac{1}{2}}$. For systems, equation (\ref{eq:ECcond0}) does not uniquely determine the interface flux. The first two EC fluxes proposed by Tadmor \cite{Tadmor, Tadmor_acta} have the inconvenience of either not having a closed-form \cite{Tadmor} expression or being relatively expensive to compute \cite{Tadmor_acta}. Using algebraic manipulations analogous to that of \cite{Roe}, Roe \cite{Roe1} proposed a simple, closed-form EC flux for the Euler equations that is more popular. The EC flux we derive for the multicomponent system in section \ref{sec:EC_flux} uses the same method. \\
\indent An ES scheme is built by complementing an EC scheme with appropriately defined dissipation operators as follows:
\begin{theorem}[\cite{Tadmor}]\label{theorem:ES}
    The finite-volume scheme (\ref{eq:FVM}) with the interface flux $\mathbf{f}_{j+\frac{1}{2}}$ defined as:
    \begin{equation*}
    \mathbf{f}_{j+\frac{1}{2}} = \mathbf{f}^*_{j+\frac{1}{2}} - D_{j+\frac{1}{2}}[\mathbf{v}]_{j+\frac{1}{2}}.
    \end{equation*}
    where $f^{*}_{j+\frac{1}{2}}$ satisfies the entropy conservation condition (\ref{eq:ECcond0}), and $D_{j+\frac{1}{2}}$ is a positive definite matrix, satisfies
    \begin{equation}\label{eq:FVM_ES}
    \frac{d}{dt}U(\mathbf{u}_{j}(t)) + \frac{1}{\Delta x}[F_{j + \frac{1}{2}} - F_{j - \frac{1}{2}}] = - \mathcal{E}_j.
    \end{equation}
    where $\mathcal{E}_j$ is given by:
    \begin{equation}
        \mathcal{E}_j = \frac{1}{2 \Delta x}([\mathbf{v}]_{j+\frac{1}{2}}^T D_{j+\frac{1}{2}} [\mathbf{v}]_{j+\frac{1}{2}} + [\mathbf{v}]_{j-\frac{1}{2}}^T D_{j-\frac{1}{2}} [\mathbf{v}]_{j-\frac{1}{2}}) > 0,
    \end{equation}
    and is therefore entropy stable. In this case, the interface entropy flux $F_{j+\frac{1}{2}}$ is given by:
    \begin{equation}\label{eq:SESflux}
        F_{j+\frac{1}{2}} = F^{*}_{j+\frac{1}{2}} - \frac{1}{2}(\mathbf{v}_{j}+\mathbf{v}_{j+1}) D_{j+\frac{1}{2}}[\mathbf{v}]_{j+\frac{1}{2}},
    \end{equation}
    where $F^{*}_{j+\frac{1}{2}}$ is the entropy flux associated with $f^{*}_{j+\frac{1}{2}}$.
\end{theorem}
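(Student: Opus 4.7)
The plan is to take the inner product of the semi-discrete update \eqref{eq:FVM} with the cell entropy variables $\mathbf{v}_j$ and rearrange the right-hand side until a telescoping entropy flux difference and a manifestly negative dissipation term appear. By the chain rule $\partial U/\partial\mathbf{u} = \mathbf{v}^T$, the time derivative on the left collapses to $dU(\mathbf{u}_j)/dt$, so the whole task reduces to showing
\begin{equation*}
\mathbf{v}_j^T\bigl(\mathbf{f}_{j+\frac{1}{2}} - \mathbf{f}_{j-\frac{1}{2}}\bigr) \;=\; F_{j+\frac{1}{2}} - F_{j-\frac{1}{2}} + \tfrac{1}{2}[\mathbf{v}]_{j+\frac{1}{2}}^T D_{j+\frac{1}{2}}[\mathbf{v}]_{j+\frac{1}{2}} + \tfrac{1}{2}[\mathbf{v}]_{j-\frac{1}{2}}^T D_{j-\frac{1}{2}}[\mathbf{v}]_{j-\frac{1}{2}}.
\end{equation*}

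I would handle the EC piece and the dissipative piece separately, and for each invoke the splitting $\mathbf{v}_j = \tfrac{1}{2}(\mathbf{v}_j+\mathbf{v}_{j+1}) - \tfrac{1}{2}[\mathbf{v}]_{j+\frac{1}{2}}$ on the right and $\mathbf{v}_j = \tfrac{1}{2}(\mathbf{v}_{j-1}+\mathbf{v}_j) + \tfrac{1}{2}[\mathbf{v}]_{j-\frac{1}{2}}$ on the left. Applied to $\mathbf{f}^{*}_{j\pm\frac{1}{2}}$, the averaged terms produce $F^{*}_{j\pm\frac{1}{2}}+\tfrac{1}{2}(\mathcal{F}_{j\pm 1}+\mathcal{F}_j)$ by definition of $F^{*}$, while the jump terms evaluate via the EC condition \eqref{eq:ECcond0} to $\pm\tfrac{1}{2}[\mathcal{F}]_{j\pm\frac{1}{2}}$. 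Adding these in cell $j$ yields $\mathbf{v}_j^T\mathbf{f}^{*}_{j\pm\frac{1}{2}} = F^{*}_{j\pm\frac{1}{2}} + \mathcal{F}_j$, so the $\mathcal{F}_j$ pieces cancel in the flux difference and leave the clean telescoping $F^{*}_{j+\frac{1}{2}} - F^{*}_{j-\frac{1}{2}}$.

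For the dissipative correction $-D_{j\pm\frac{1}{2}}[\mathbf{v}]_{j\pm\frac{1}{2}}$, the same splitting gives, on the right-hand face, $-\mathbf{v}_j^T D_{j+\frac{1}{2}}[\mathbf{v}]_{j+\frac{1}{2}} = -\tfrac{1}{2}(\mathbf{v}_j+\mathbf{v}_{j+1})^T D_{j+\frac{1}{2}}[\mathbf{v}]_{j+\frac{1}{2}} + \tfrac{1}{2}[\mathbf{v}]_{j+\frac{1}{2}}^T D_{j+\frac{1}{2}}[\mathbf{v}]_{j+\frac{1}{2}}$; the first term is exactly the dissipative part of $F_{j+\frac{1}{2}}$ in \eqref{eq:SESflux}, the second is the advertised quadratic form. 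The analogous computation on the left face is sign-symmetric: the opposite sign in the $\mathbf{v}_j$-splitting is cancelled by the overall minus sign of $-\mathbf{f}_{j-\frac{1}{2}}$ in the flux difference, and the residual quadratic form comes out with the same (negative) sign as its $j+\tfrac{1}{2}$ counterpart when moved to the right-hand side of the entropy balance. Collecting all contributions reproduces the claimed identity \eqref{eq:FVM_ES}, and positive-definiteness of $D_{j\pm\frac{1}{2}}$ gives $\mathcal{E}_j\ge 0$ (strictly positive whenever at least one of the two jumps is nonzero), which is the desired entropy stability.

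The computation is purely algebraic and the only real point of care is tracking the sign pattern produced by $\mathbf{v}_j = \tfrac{1}{2}(\mathbf{v}_{j\pm 1}+\mathbf{v}_j) \mp \tfrac{1}{2}[\mathbf{v}]_{j\pm\frac{1}{2}}$ on the two faces, so that the quadratic forms coming from both faces end up contributing with matching signs after being pushed to the right-hand side of the entropy balance. No additional hypothesis on $\mathbf{f}^{*}$ beyond entropy conservation and no structure of $D$ beyond positive-definiteness is needed, so the proof does not require any of the multicomponent structure developed later in the paper.
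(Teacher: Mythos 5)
Your argument is correct: the cell-entropy balance obtained by contracting the scheme with $\mathbf{v}_j$, combined with the splittings $\mathbf{v}_j = \tfrac{1}{2}(\mathbf{v}_{j}+\mathbf{v}_{j\pm 1}) \mp \tfrac{1}{2}[\mathbf{v}]_{j\pm\frac{1}{2}}$, the EC condition (\ref{eq:ECcond0}) and the formula for $F^{*}_{j\pm\frac{1}{2}}$, yields exactly (\ref{eq:FVM_ES}) with the entropy flux (\ref{eq:SESflux}), and your sign bookkeeping on both faces checks out (including the caveat that $\mathcal{E}_j$ is only strictly positive when a jump is nonzero). The paper states this theorem without proof, citing Tadmor, and your derivation is the standard one from that reference, so there is nothing further to compare.
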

Summing over all cells and assuming periodic boundary conditions leads to the  integral entropy stability statement:
\begin{equation}\label{eq:ES_global}
    \frac{d}{dt} \sum_j U(\mathbf{u}_j) = - \sum_j \mathcal{E}_j < 0.
\end{equation}
This integral stability statement is obtained as a consequence of the local statement (\ref{eq:FVM_ES}), which in itself is not a stability statement. It does not necessarily imply for instance that in every cell $j$:
\begin{equation*}
    \frac{d}{dt}U(\mathbf{u}_j) < 0.
\end{equation*}
Looking at equation (\ref{eq:FVM_ES}), we see that the local variation of $U$ in time depends on both $\mathcal{E}_j$, which is determined by the dissipation operator, and the interface entropy flux $F_{j+\frac{1}{2}}$, which according to equation (\ref{eq:SESflux}) is determined by both the EC flux and the dissipation operator. \\
\indent The second principle of thermodynamics is often invoked to defend the consistency of ES schemes with physics. First, this principle only applies to closed systems, hence it can hardly be invoked when the boundary conditions are non-trivial. Second, the second principle is not a local statement. This means that it  does not explicitly support the local inequality (\ref{eq:FVM_INEQ}).   

\subsection{Entropy-conservative flux}\label{sec:EC_flux}
In compact notation, the entropy conservation condition (\ref{eq:ECcond0}) writes:
\begin{equation}\label{eq:ECcond1}
    [\mathbf{v}] \cdot \mathbf{f}^{*} = [\mathcal{F}],
\end{equation}
where $\mathbf{f}^{*} = [f_{1,1} \ \dots \ f_{1,N} \ f_2 \ f_3]$ denotes the interface flux. Define the set of algebraic variables:
\begin{equation*}
    \mathbf{z} = \begin{bmatrix} \rho_1 & \hdots & \rho_N & u & \frac{1}{T} \end{bmatrix} = \begin{bmatrix} z_{1,1} & \hdots & z_{1,N} & z_2 & z_3 \end{bmatrix}.
\end{equation*}
The first step of Roe's technique \cite{Roe1} consists in rewriting the jump terms in equation (\ref{eq:ECcond1}) as linear combinations of the jumps in the algebraic variables. For a given quantity $a$, let $\overline{a}$ and $a^{ln}$ be the arithmetic and logarithmic averages, respectively:
\begin{equation*}
    \overline{a} = \frac{1}{2}(a_L+a_R), \ a^{ln} = \begin{cases}
        a_L, \ \ \mbox{if $a_L = a_R$} \\
        0, \ \ \mbox{if $a_L = 0$ or $a_R = 0$} \\
        \frac{a_R - a_L}{\ln(a_R)-\ln(a_L)}, \ \ \mbox{else}
    \end{cases}
\end{equation*}
Note the identities:
\begin{equation*}
    [ab] = \overline{a}[b] + \overline{b}[a], \ \ \mbox{and} \ \ [\ln(a)] = [a]/a^{ln}.
\end{equation*}
The jump in the potential function can be rewritten as:
\begin{equation} \label{eq:jumps_1}
    [\mathcal{F}] = \sum_{k=1}^Nr_k [\rho_k u] = \bigg(\sum_{k=1}^Nr_k \overline{z_{1,k}}\bigg) [z_2] + \sum_{k=1}^Nr_k \overline{z_2} [z_{1,k}].
\end{equation}
For the jump in entropy variables, we need to examine the first N components. For $1 \leq k \leq N$:
\begin{align*}
\frac{1}{T}(g_k - \frac{1}{2}u^2) = \frac{e_{0k}}{T} + c_{vk}  + r_k - c_{vk}\ln T + r_k\ln(\rho_k) - \frac{1}{2}\frac{u^2}{T} = e_{0k}z_3 + c_{vk} + r_k 
+ c_{vk}\ln(z_3) + r_k \ln(z_{1,k}) - \frac{1}{2}z_2^2 z_3.
\end{align*}
The corresponding jumps then write:
\begin{align}
\bigg[\frac{1}{T}(g_k - \frac{1}{2}u^2)\bigg] =& \ [z_{1,k}] \frac{r_k}{z_{1,k}^{ln}} - [z_2] \overline{z_2} \ \overline{z_3} + [z_3] ( e_{0k} + \frac{c_{vk}}{z_3^{ln}} - \frac{1}{2}\overline{z_2^2}) \label{eq:jumps_1N}
\end{align}
The remaining jumps are given by:
\begin{equation} \label{eq:v_jumps23}
    \bigg[\frac{u}{T}\bigg] = \overline{z_3}[z_2] + \overline{z_2}[z_3], -\bigg[\frac{1}{T}\bigg] = -[z_3].
\end{equation}
Using equations (\ref{eq:jumps_1}), (\ref{eq:jumps_1N}) and (\ref{eq:v_jumps23}), the entropy conservation condition (\ref{eq:ECcond1}) can be rewritten as the requirement that a linear combination of the jumps in the algebraic variables equals zero:
\begin{multline*}
    \sum_{k=1}^N[z_{1,k}] \bigg(\frac{r_k}{z_{1,k}^{ln}} f_{1,k}\bigg) + [z_2] \bigg( (-\overline{z_3} \ \overline{z_2})\sum_{k=1}^Nf_{1,k} + \overline{z_3} f_2 \bigg) + [z_3] \bigg(  \sum_{k=1}^N(e_{0k} + c_{vk}\frac{1}{z_3^{ln}} - \frac{1}{2}\overline{z_2^2}) f_{1,k} + \overline{z_2} f_2 - f_3 \bigg) = \\
        [z_2] \bigg(\sum_{k=1}^Nr_k \overline{z_{1,k}}\bigg)  + \sum_{k=1}^N[z_{1,k}] r_k \overline{z_2}. 
\end{multline*}
The second step turns this scalar condition into
a system of $N + 3$ equations by invoking the independence of these jumps:
\begin{gather*}
   r_k \frac{1}{z_{1,k}^{ln}} = r_k \overline{z_2} f_{1,k}, \ 1 \leq k \leq N, \ \ 
   (-\overline{z_3} \ \overline{z_2})\sum_{k=1}^Nf_{1,k} + \overline{z_3} f_2 =  \bigg(\sum_{k=1}^Nr_k \overline{z_{1,k}}\bigg), \ \
    \sum_{k=1}^N(e_{0k} + c_{vk}\frac{1}{z_3^{ln}} - \frac{1}{2}\overline{z_2^2}) f_{1,k} + \overline{z_2} f_2 - f_3 = 0.
\end{gather*}
The solution under these assumptions is:
\begin{align*}
    f_{1,k} =& \ \overline{z_2} z_{1,k}^{ln}, \\
    f_2     =& \ \frac{1}{\overline{z_3}} \bigg(\sum_{k=1}^Nr_k \overline{z_{1,k}}\bigg) + \overline{z_2} \sum_{k=1}^Nf_{1,k}, \\
    f_3     =& \ \sum_{k=1}^N(e_{0k} + c_{vk}\frac{1}{z_3^{ln}} - \frac{1}{2}\overline{z_2^2}) f_{1,k} + \overline{z_2} f_2.
\end{align*}
Therefore we obtain:
\begin{theorem}
    The interface flux $f^{*} = [f_{1,1} \ \dots \ f_{1,N} \ f_{2} \ f_{3}]$ defined by:
    \begin{align}\label{eq:ECfluxMC}
    f_{1,k} =& \  \rho_{k}^{ln}\overline{u}, \ 1 \leq k \leq N \nonumber \\
    f_2     =& \ \frac{1}{\overline{1/T}} \bigg(\sum_{k=1}^Nr_k \overline{\rho_k}\bigg) + \overline{u} \sum_{k=1}^Nf_{1,k}, \\
    f_3     =& \ \sum_{k=1}^N(e_{0k} + c_{vk}\frac{1}{(1/T)^{ln}} - \frac{1}{2}\overline{u^2}) f_{1,k} + \overline{u} f_2. \nonumber
\end{align}
is entropy conservative for the multicomponent system defined in section 2.
\end{theorem}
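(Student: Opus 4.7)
The plan is to verify Tadmor's entropy conservation condition (\ref{eq:ECcond1}) directly for the candidate flux (\ref{eq:ECfluxMC}) by expressing both sides of $[\mathbf{v}]\cdot\mathbf{f}^{*} = [\mathcal{F}]$ as linear combinations of jumps in the algebraic variables $\mathbf{z} = (\rho_1,\ldots,\rho_N,u,1/T)$ and matching coefficients term by term.

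First, I would expand the entropy variables (\ref{eq:v}) as smooth functions of $\mathbf{z}$. Substituting $g_k = e_{0k} + c_{vk}T + r_k T - T s_k$ with $s_k = c_{vk}\ln T - r_k \ln \rho_k$ turns the first $N$ components of $\mathbf{v}$ into expressions that are affine in $\ln z_{1,k}$ and $\ln z_3$ plus polynomial terms in $z_2$ and $z_3$, while the last two components are polynomial in $z_2,z_3$ already. Their jumps then decompose using only two identities: the logarithmic mean $[\ln a]=[a]/a^{\ln}$ handles the $\ln \rho_k$ and $\ln T$ contributions, and the product rule $[ab]=\bar a[b]+\bar b[a]$ handles the quadratic pieces $u^2/T$ and $u/T$. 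In parallel, $[\mathcal{F}]=\sum_k r_k[\rho_k u]$ expands by a single application of the product rule.

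Second, after substitution both sides of (\ref{eq:ECcond1}) take the form $\sum_k C_k[z_{1,k}] + C_2[z_2] + C_3[z_3]$. Because the jumps $[z_{1,k}],[z_2],[z_3]$ are algebraically independent as the left/right states vary freely, the required scalar equality is equivalent to the system of $N+3$ coefficient equations that the authors exhibit. This system is triangular in the unknown flux components: the $[z_{1,k}]$ equations give $f_{1,k}$ in isolation, then the $[z_2]$ equation determines $f_2$ using the already-known $f_{1,k}$, and finally the $[z_3]$ equation determines $f_3$. Solving in this order should reproduce (\ref{eq:ECfluxMC}), with the logarithmic means $\rho_k^{\ln}$ and $(1/T)^{\ln}$ appearing precisely because the $\ln$ terms in $\mathbf{v}$ demand them.

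The main obstacle I anticipate is bookkeeping rather than conceptual: the terms $-u^2/(2T)$ and the enthalpy part of $g_k$ contribute simultaneously to the $[z_{1,k}]$, $[z_2]$, and $[z_3]$ coefficients, so it is easy to drop or double-count a cross term. A secondary point worth checking is the well-definedness of $\rho_k^{\ln}$ and $(1/T)^{\ln}$ at equal states (the piecewise definition given above makes these continuous), and, in view of the vanishing-partial-density issue flagged in the introduction, that the formula $f_{1,k} = \rho_k^{\ln}\bar u$ extends meaningfully when $\rho_k^L$ or $\rho_k^R$ vanishes; the case $\rho_k^L=\rho_k^R=0$ gives $f_{1,k}=0$, consistent with the consistency requirement at that state.
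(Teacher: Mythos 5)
Your proposal is correct and follows essentially the same route as the paper: both expand $[\mathbf{v}]\cdot\mathbf{f}^{*}$ and $[\mathcal{F}]$ as linear combinations of the jumps $[z_{1,k}],[z_2],[z_3]$ via the product rule and the logarithmic-mean identity, invoke independence of those jumps to reduce the scalar condition to a coefficient system, and solve it triangularly ($f_{1,k}$, then $f_2$, then $f_3$). The only cosmetic caveat is that matching coefficients is sufficient rather than equivalent to the scalar identity (the coefficients depend on the states through the averages), but since only sufficiency is needed to establish the theorem, this does not affect the argument; your closing remarks on the $\rho_k\to 0$ limit mirror the paper's own discussion following the theorem.
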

\indent This EC flux is the multicomponent version of Chandrasekhar's EC flux \cite{Chandrasekhar} for the compressible Euler equations. Chandrasekhar's flux was designed with the additional property of being Kinetic-Energy Preserving (KEP) in the sense of Jameson \cite{Jameson}, meaning that the kinetic energy equation is satisfied by the finite volume scheme (\ref{eq:FVM}) in a semi-discrete sense (in the same spirit as EC schemes). This property can be useful in turbulent flow simulations \cite{Subbareddy}. For the compressible Euler equations, Jameson \cite{Jameson} showed that this is achieved if the momentum flux $f^{\rho u}$ has the form $f^{\rho u} = \Tilde{p} + \overline{u} f^{\rho}$, where $f^{\rho}$ is the mass flux and $\tilde{p}$ is any consistent pressure average. The extension of Jameson's analysis \cite{Jameson} to the multicomponent case is straightforward and it can be showed that if the momentum flux has the same form as in the single component case (with $f^{\rho}$ denoting the \textit{total} mass flux), the KEP property is achieved. The EC flux we derived qualifies, with $\tilde{p}$ given by:
\begin{equation*}
    \tilde{p} = \frac{1}{\overline{(1/T)}}\sum_{k=1}^Nr_k \overline{\rho_k}.    
\end{equation*}
Note in passing that if $[p] = 0$ across the discontinuity, then $\tilde{p}$ defined above is exactly the pressure on each side. \\
\indent The flux (\ref{eq:ECfluxMC}) is well-defined in the limit case $\rho_k = 0$, where the entropy variable $v_{1,k}$ is undefined, but is the entropy conservation condition still met? Equation (\ref{eq:ECcond0}) writes 
\begin{equation*}
    \sum_{k=1}^N[v_{1,k}]f_{1,k} + [v_2]f_{2} + [v_3]f_{3} = [\mathcal{F}] .  
\end{equation*}    
The jumps in $v_{1,k}$ are undefined because they involve jumps in $\ln(\rho_{k})$. However, we note that the first term:
\begin{align*}
\sum_{k=1}^N[v_{1,k}] f_{1,k} =& \ \sum_{k=1}^N\bigg[\frac{h_k}{T} - \frac{1}{2}\frac{u^2}{T}\bigg] (\rho_k^{ln}\overline{u}) \ - \ \sum_{k} [s_k] (\rho_k^{ln}\overline{u}) \\
                         =& \ \sum_{k=1}^N\bigg[\frac{h_k}{T} - \frac{1}{2}\frac{u^2}{T}\bigg] (\rho_k^{ln}\overline{u}) \ - \ \sum_{k} c_{vk} [\ln T] (\rho_k^{ln}\overline{u}) + \ \sum_{k=1}^Nr_k [\ln \rho_k] (\rho_k^{ln}\overline{u}) \\
                         =& \ \sum_{k=1}^N\bigg[\frac{h_k}{T} - \frac{1}{2}\frac{u^2}{T}\bigg] (\rho_k^{ln}\overline{u}) \ - \ \sum_{k} c_{vk} [\ln T] (\rho_k^{ln}\overline{u}) + \ \sum_{k=1}^Nr_k [\rho_k] \overline{u}
\end{align*}
is well-defined at $\rho_k = 0$, because the logarithmic averages $\rho_k^{ln}$ compensated for the logarithmic jumps $[\ln \rho_k]$. We thus find that the entropy conservation condition is satisfied even in the limit $\rho_k = 0$. \\
\indent Note that the EC flux does not transfer mass across an interface separating two different species. 
\subsection{Entropy-stable flux}\label{sec:ES_flux}
\subsubsection{Upwind-type dissipation operator}
Let $A = R \Lambda R^{-1}$ be an eigendecomposition of $A$. A popular choice for the dissipation operator consists of recasting the upwind operator of Roe \cite{Roe} $\rightarrow \frac{1}{2} R|\Lambda|R^{-1}[\mathbf{u}]$ in terms of the entropy variables. With the differential relation $d\mathbf{u} = H d\mathbf{v}$, this leads to:
\begin{equation*}
    D[\mathbf{v}] = \frac{1}{2} R |\Lambda| R^{-1} H [\mathbf{v}].
\end{equation*}
For the compressible Euler equations, Merriam (\cite{Merriam}, section 7.3) pointed out that there exists a scaling of the columns of $R$ such that $H = R R^T$, which ultimately leads to a dissipation matrix $D = R |\Lambda| R^T$ which has the desirable property of being positive definite (Theorem \ref{theorem:ES}). Barth \cite{Barth} generalized this result to hyperbolic systems with a convex extension. The generalization takes the form of an eigenscaling theorem (Theorem 4 in \cite{Barth}) which states that for any diagonalizable matrix $A$ symmetrized on the right by a symmetric positive definite matrix $H$, there exists a symmetric block diagonal matrix $T$ that block scales the eigenvectors $R$ of $A$ in such a manner that:
\begin{equation}\label{eq:EigenScaling}
    A = (RT) \Lambda (RT)^T \ \ \mbox{and} \ \ H = (RT) (RT)^T.
\end{equation}
The dimensions of the blocks of $T$ correspond to the multiplicities of the eigenvalues of A. The second identity in equation (\ref{eq:EigenScaling}) provides an explicit expression for the squared scaling matrix $T^2 = R^{-1} H R^{-T}$. \\
\indent We now proceed to derive the scaling matrix for the multicomponent Euler system. First, the flux Jacobian is given by:
\begin{equation*}
    A = \frac{\partial \mathbf{f}}{\partial \mathbf{u}} = 
        \begin{bmatrix} 
            u(1 - Y_1) & \hdots &    -u Y_1  &  Y_1   &    0   \\
                               & \ddots &                    &  \vdots   &    0   \\
             - u Y_N    & \hdots & u(1 - Y_N) &  Y_N   &    0   \\
            \frac{(\gamma - 3)}{2}u^2 + d_1 & \hdots & \frac{(\gamma - 3)}{2}u^2 + d_N &  u(3 - \gamma)  & \gamma - 1   \\
           u(d_1 - h^t + \frac{u^2}{2}(\gamma - 1)) & \hdots & u(d_N - h^t + \frac{u^2}{2}(\gamma - 1)) &  h^t - u^2(\gamma-1)  & u \gamma 
        \end{bmatrix},
\end{equation*}
where $d_k = h_k - \gamma e_k$ and $h^t = h + \frac{1}{2}u^2$. We have $A = R \Lambda R^{-1}$ with:
\begin{gather*} 
   R = \begin{bmatrix}
            1 &        & 0 &    Y_1   &   Y_1  \\
              & \ddots &   &  \vdots  & \vdots \\
            0 &        & 1 &    Y_N   &   Y_N  \\
            u & \hdots & u &   u + a  &  u - a \\
            k - \frac{d_1}{\gamma - 1} & \hdots & k - \frac{d_N}{\gamma - 1} & h^t + u a & h^t - u a
        \end{bmatrix}, \  \Lambda = diag(u, \ \hdots , \ u, \ u + a, \ u -a), \ a^2 = \gamma r T,
\end{gather*}
where $k = \frac{u^2}{2}$. The Jacobian $H$ of the mapping $\mathbf{v} \rightarrow \mathbf{u}$ is given by \cite{Giovangigli}:
\begin{equation}\label{eq:dudv_1D}
    H = \frac{\partial \mathbf{u}}{\partial \mathbf{v}}  = 
    \begin{bmatrix}
       \rho_1/r_1 &        &             0              &  u\rho_1/r_1 & \rho_1 e_1^t / r_1 \\
                             & \ddots &                            &        \vdots          &              \vdots                        \\
                 0           &        &    \rho_N/r_N    &  u \rho_N/r_N & \rho_N e_N^t / r_1 \\
       u \rho_1/r_1 & \hdots &   u \rho_N/r_N    &  \rho T + u^2 \sum_{k=1}^N\rho_k/r_k & u(\rho T + \sum_{k=1}^N\rho_k e_k^t / r_k ) \\
        \rho_1 e_1^t / r_1 & \hdots &  \rho_N e_N^t/ r_N & u \rho T + u \sum_{k=1}^N\rho_k e_k^t / r_k  & 
        \rho T (u^2 + c_v T) + \sum_{k=1}^N\rho_k(e_k^t)^2 / r_k
    \end{bmatrix},
\end{equation}
where $e_k^t = e_k + k$. The squared scaling matrix is given by:
\begin{equation}\label{eq:T2}
    T^2 =  R^{-1}H R^{-T} = \frac{\rho}{\gamma r} diag(T^{2Y}, \ 1/2, \ 1/2),
\end{equation}
where $T^{2Y} \in \mathbb{R}^{N \times N}$ is given by:
\begin{align*}
    T^{2Y}_{ii} =& \ (\gamma - 1)Y_i^2 + \sum_{k \neq i} (\gamma r_k/r_i) Y_k Y_i, \ 1 \leq i \leq N, \\
    T^{2Y}_{ij} =& \ - Y_i Y_j, \ 1 \leq i \neq j \leq N.
\end{align*}
At this point, the dissipation operator writes:
\begin{equation}\label{eq:ES_diss1}
    D[\mathbf{v}] = \frac{1}{2} R |\Lambda| T^{2} R^{T} [\mathbf{v}], 
\end{equation}
and qualifies for the ES scheme because the matrix $R |\Lambda| T^{2} R^{T}$ is positive definite ($T^2$ and $|\Lambda|$ commute). However, as will be seen in section \ref{sec:TecNO}, a scaled form (\ref{eq:EigenScaling}) of the dissipation operator is necessary. \\
\indent For $N = 2$, we have:
\begin{equation*}
    T^{2Y} = \begin{bmatrix}
           (\gamma-1)Y_1^2 + (\gamma r_2/r_1) Y_1 Y_2  & -Y_1 Y_2 \\
            -Y_1 Y_2 & (\gamma - 1) Y_2^2 + (\gamma r_1/r_2) Y_1 Y_2 
          \end{bmatrix}.
\end{equation*}
$T^{2Y}$ is symmetric, real-valued with non-negative eigenvalues therefore there exists $T^{Y}$ with the same properties such that $T^{2Y} = (T^{Y})^2 = T^{Y} (T^{Y})^T$ ($T^{Y}$ is the square root of $T^{2Y}$). This matrix can be derived using an eigenvalue decomposition. However the expression of $T^{Y}$ is quite complicated. The square root of $T^{2Y}$ is not necessary to proceed. For $N=2$, $T^{2Y}$ can be rewritten as:
\begin{equation}\label{eq:fake_tilde}
    T^{2Y} = \mathcal{T}^{Y} (\mathcal{T}^{Y})^T, \ 
    \mathcal{T}^{Y} = 
    \begin{bmatrix}
        -\sqrt{Y_1Y_2} \sqrt{\gamma r_2/r_1} & Y_1 \sqrt{\gamma-1} \\
        \sqrt{Y_1Y_2} \sqrt{\gamma r_1/r_2} & Y_2 \sqrt{\gamma-1}
    \end{bmatrix}.
\end{equation}
$\mathcal{T}^{Y}$ is not the square root of $T^{2Y}$, however it is enough to obtain a scaled formulation (\ref{eq:EigenScaling}) because:
\begin{equation*}
    T^2 = \mathcal{T} \mathcal{T}^T, \ \mathcal{T} =  \sqrt{\frac{\rho}{\gamma r}} diag(\mathcal{T}^{Y}, \ 1/\sqrt{2}, \ 1/\sqrt{2}),
\end{equation*}
and $\mathcal{T}$ commutes with $|\Lambda|$ therefore the dissipation operator can be rewritten as:
\begin{equation}\label{eq:ESflux}
    D[\mathbf{v}] = \frac{1}{2} (R \mathcal{T}) |\Lambda | (R \mathcal{T})^T [\mathbf{v}].
\end{equation}
For $N > 2$, the expression for $T^{Y}$ becomes even more complicated. For $N = 3$, the alternative (\ref{eq:fake_tilde}) to $T^{Y}$ we proposed for $N = 2$ becomes: 
\begin{equation*}
    T^{2Y} = \mathcal{T}^Y (\mathcal{T}^Y)^T, \ 
    \mathcal{T}^{Y} = 
        \begin{bmatrix}
           -\sqrt{Y_1 Y_2} \sqrt{\gamma r_2/r_1} & -\sqrt{Y_1 Y_3} \sqrt{\gamma r_3/r_1}& 0 & -Y_1 \sqrt{\gamma-1} \\
           \sqrt{Y_1 Y_2} \sqrt{\gamma r_1/r_2} & 0 & -\sqrt{Y_2 Y_3} \sqrt{\gamma r_3/r_2} & -Y_2 \sqrt{\gamma-1} \\
           0 & \sqrt{Y_1 Y_3} \sqrt{\gamma r_1/r_3} &  \sqrt{Y_2 Y_3} \sqrt{\gamma r_2/r_3} & -Y_3 \sqrt{\gamma-1}
        \end{bmatrix}.
\end{equation*}
There is one more column compared to the $N=2$ case. The form (\ref{eq:ESflux}) still holds except that $R\mathcal{T} \in \mathbb{R}^{3 \times 4}$ instead of $\mathbb{R}^{3 \times 3}$ and $|\Lambda| \in \mathbb{R}^{4 \times 4}$ diagonal with an extra $|u|$ term. For $N$ species, the "pseudo" scaling matrix $\mathcal{T}^Y$ we described will be in $\mathbb{R}^{N \times (N(N-1)/2+1)}$. 
\subsubsection{Average state}
\indent To complete the definition of the dissipation operator, an average state (referred to with the superscript $*$) needs to be specified. \\
\indent We showed in section \ref{sec:EC_flux} that the EC flux (\ref{eq:ECfluxMC}) is well defined in the limit $\rho_k = 0$. What about the dissipation operator $R |\Lambda| T^2 R^{T} [\mathbf{v}]$? At first glance, the presence of the jump term $[\mathbf{v}]$ is problematic, because $[v_{1,k}]$ is undefined. For $N = 2$, the squared scaling matrix can be rewritten as:
\begin{equation}\label{eq:R}
    T^2 = \overline{T}^2 \mathcal{R}, \ \overline{T}^2 = \frac{1}{\gamma r}\begin{bmatrix}
            (\gamma-1) Y_1 +  \gamma r_2/r_1 Y_2  & -Y_2  & 0 & 0 \\
            -Y_1  & (\gamma-1) Y_2 + \gamma r_1/r_2 Y_1  & 0 & 0 \\
            0 & 0 & \frac{1}{2} & 0 \\
            0 & 0 & 0 & \frac{1}{2}
          \end{bmatrix}, \ \mathcal{R} = 
          \begin{bmatrix}
          \rho_1 &   0    &   0   &    0  \\
            0    & \rho_2 &   0   &    0  \\
            0    &   0    & \rho  &    0  \\
            0    &   0    &   0   &  \rho
          \end{bmatrix}.
\end{equation}
Denoting $D_k = k - \frac{d_k}{\gamma - 1}$, we have:
\begin{multline}\label{eq:rhoRdv}
   \mathcal{R}R^T[\mathbf{v}] = \begin{bmatrix}
          \rho_1^* &   0    &   0   &    0  \\
            0    & \rho_2^* &   0   &    0  \\
            0    &   0    & \rho^*  &    0  \\
            0    &   0    &   0   &  \rho^*
    \end{bmatrix}
    \begin{bmatrix}
            1   &  0  &  u^*  & D_1^* \\
            0   &  1  &  u^*  & D_2^*  \\
            Y_1^* & Y_2^* & u^*+a^* &  (h^t)^* + u^* a^* \\
            Y_1^* & Y_2^* & u^*-a^* &  (h^t)^* - u^* a^* 
    \end{bmatrix}
    \begin{bmatrix}
      \big [ v_{1,1} \big ] \\ \big [ v_{1,2} \big ] \\ \big [ v_2 \big ] \\ \big [ v_3 \big ]
    \end{bmatrix}
    = \\ \begin{bmatrix}
    \rho_1^* \big [ v_{1,1} \big ] + \rho_1^* u \big [ v_2 \big ] + \rho_1^* D_1^* \big [ v_3 \big ] \\
    \rho_2^* \big [ v_{1,2} \big ] + \rho_2^* u \big [ v_2 \big ] + \rho_2^* D_2^* \big [ v_3 \big ] \\
    \rho_1^* \big [ v_{1,1} \big ] + \rho_2^* \big [ v_{1,2} \big ] + \rho^* (u^* + a^*) \big [ v_2 \big ] + \rho^* ((h^t)^* + u^* a^*) \big [ v_3 \big ] \\
    \rho_1^* \big [ v_{1,1} \big ] + \rho_2^* \big [ v_{1,2} \big ] + \rho^* (u^* - a^*) \big [ v_2 \big ] + \rho^* ((h^t)^* - u^* a^*) \big [ v_3 \big ]
    \end{bmatrix}
\end{multline}
We can see that $\mathcal{R}R^{T}[\mathbf{v}]$ is well-defined if $\rho_k^{*} [v_{1,k}]$ is well-defined as well. Given that:
\begin{equation*}
    \rho_k^* [v_{1,k}] = \bigg(- c_{vk}[ln(T)] - \bigg[\frac{u^2}{2 T}\bigg] \bigg) \rho_k^* + \frac{R}{m_k} [\ln \rho_k] \rho_k^*,
\end{equation*}
we see that with $\rho_k^* = \rho_k^{ln}$ the dissipation operator is well-defined. For total density, one might be tempted to take $\rho^* = \sum_{k=1}^N\rho_k^{*} = \sum_{k=1}^N \rho_k^{ln}$. This definition makes $\rho^* = 0$ possible, which is undesirable given that $Y_k^* = \rho_k^* / \rho^*$. $\rho^* =\rho^{\ln}$ is a safer choice. \\
\indent The exact resolution of stationary contact discontinuities is a desirable property in the calculation of boundary and shear layers (even though it might produce carbuncles on blunt-body calculations, see \cite{Quirk_carbuncle} paragraph 2.4.). In this case, $[p] = 0, \ [u] = \overline{u} = 0$ and the EC flux we derived reduces to:
\begin{equation*}
    f_{1,k} = 0, \ f_2 = \frac{1}{\overline{1/T}} \bigg(\sum_{k=1}^N\frac{R}{m_k} \overline{\rho_k}\bigg), \ f_3 = 0.
\end{equation*}
From the ideal gas law we can state that $f_2$ is exactly the pressure on both sides of the contact. The dissipation term must therefore cancel out if we want the ES scheme to exactly preserve stationary contact discontinuities.
\begin{lemma}
 The dissipation operator given by equation (\ref{eq:ES_diss1}) vanishes at stationary contact discontinuities if the averaged state satisfies the relationship:
 \begin{equation}\label{eq:StatContact}
     \rho^* h^* = \sum_{k=1}^N\rho_k^{*} \bigg( e_{0k} + c_{vk} \frac{1}{(1/T)^{ln}}\bigg) + \overline{p}, \ \rho_k^{*} = \rho_k^{ln}.
 \end{equation}
\end{lemma}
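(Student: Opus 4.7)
The plan is to exploit the fact that at a stationary contact the eigenvalue matrix $|\Lambda^*|$ degenerates, so that the dissipation operator collapses onto the two acoustic modes and its vanishing reduces to a single scalar constraint. Expanding that constraint with the logarithmic-mean identities will then deliver the stated relation.

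First I would note that since $u_L=u_R=0$ at a stationary contact, $u^*=0$, $\overline{u}=0$ and $[u]=0$; with $v_2=u/T$ this forces $[v_2]=0$. Consequently $|\Lambda^*|=\mathrm{diag}(0,\ldots,0,a^*,a^*)$: the $N$ entropy waves drop out of the dissipation and only the two acoustic waves survive. Using the factorization $T^2=\overline{T}^2\mathcal{R}$ of~(\ref{eq:R}) together with the explicit form of $\mathcal{R}(R^*)^T[\mathbf{v}]$ in~(\ref{eq:rhoRdv}), substitution of $u^*=0$, $[v_2]=0$ and $h^{t,*}=h^*$ collapses rows $N+1$ and $N+2$ of $\mathcal{R}^*(R^*)^T[\mathbf{v}]$ to the common value
\begin{equation*}
  \alpha := \sum_{k=1}^N \rho_k^*[v_{1,k}] + \rho^* h^* [v_3].
\end{equation*}
Because $\overline{T}^{*2}$ carries $1/(2\gamma r^*)$ on each acoustic diagonal entry and its upper $N\times N$ block is annihilated by $|\Lambda^*|$, the vector $|\Lambda^*|T^{*2}(R^*)^T[\mathbf{v}]$ reduces to $(0,\ldots,0,\beta,\beta)^T$ with $\beta=a^*\alpha/(2\gamma r^*)$. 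At $u^*=0$ the two acoustic columns of $R^*$ are $(Y_1^*,\ldots,Y_N^*,\pm a^*,h^*)^T$, so their sum has zero momentum entry; multiplying by $R^*$ yields $\tfrac12 R^*|\Lambda^*|T^{*2}(R^*)^T[\mathbf{v}] = \beta\,(Y_1^*,\ldots,Y_N^*,0,h^*)^T$, which vanishes iff $\alpha=0$.

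It then remains to turn $\alpha=0$ into an averaging prescription. At $u=0$, $v_{1,k}=g_k/T=e_{0k}/T+c_{pk}-c_{vk}\ln T+r_k\ln\rho_k$, so $[v_{1,k}]=e_{0k}[1/T]-c_{vk}[\ln T]+r_k[\ln\rho_k]$. Invoking the identities $[\ln T]=-[1/T]/(1/T)^{\ln}$ and $[\ln\rho_k]=[\rho_k]/\rho_k^{\ln}$, and taking $\rho_k^*=\rho_k^{\ln}$, I would obtain
\begin{equation*}
  \rho_k^*[v_{1,k}] \;=\; \rho_k^{\ln}\!\left(e_{0k}+\frac{c_{vk}}{(1/T)^{\ln}}\right)[1/T] + r_k[\rho_k].
\end{equation*}
The ideal gas law $p/T=\sum_k\rho_k r_k$ then gives $\sum_k r_k[\rho_k]=[p/T]=\overline{p}[1/T]+\overline{1/T}\,[p]$, which reduces to $\overline{p}[1/T]$ by the contact condition $[p]=0$. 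Substituting into $\alpha$ and using $[v_3]=-[1/T]$ factors a common $[1/T]$ out of every term and leaves precisely the stated relation for $\rho^*h^*$. The work is almost entirely bookkeeping; the conceptual point, and the only genuine obstacle, is recognising that the logarithmic-mean choice $\rho_k^*=\rho_k^{\ln}$ is exactly what tames the otherwise-singular $[\ln\rho_k]$ jumps so that $[1/T]$ can be factored cleanly out of $\alpha$.
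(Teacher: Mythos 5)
Your proposal is correct and follows essentially the same route as the paper: reduce the dissipation to the single scalar acoustic constraint $\sum_k\rho_k^*[g_k/T]-\rho^*h^*[1/T]=0$ at $u=0$, expand the jumps with the logarithmic-mean identities, and invoke the ideal gas law with $[p]=0$ to factor out $[1/T]$. The only (harmless) difference is organizational — you work through the $\mathcal{R}$ factorization and $\rho_k^*[v_{1,k}]$ directly, while the paper passes through $Y_k^*[g_k/T]$ and $[s_k]$ before multiplying by $\rho^*$ — and you additionally note that the vanishing is in fact an "iff".
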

\begin{proof}
Since $u = 0$, we have: 
\begin{gather*}
    R^T = \begin{bmatrix}
            1   &        &  0  &     0    &   d_1^*/(\gamma-1)  \\
                & \ddots &     &  \vdots  &             \vdots                \\
            0   &        &  1  &     0    &   d_N^*/(\gamma-1)  \\
            Y_1^* & \hdots & Y_N^* &     a^*    &  h^*             \\
            Y_1^* & \hdots & Y_N^* &    -a^*    &  h^* 
        \end{bmatrix}, \ 
    [\mathbf{v}] = \begin{bmatrix} 
                        \big[\frac{g_1}{T}\big] & \hdots & \big[\frac{g_N}{T}\big] & 0 & -\big[\frac{1}{T}\big]
                    \end{bmatrix}^T,
\end{gather*}
therefore:
\begin{equation*}
    R^{T}[\mathbf{v}] = 
        \begin{bmatrix}
            \big[ \frac{g_1}{T} \big] - d_1^*/(\gamma-1) \big[ \frac{1}{T} \big] \\
            \vdots \\
            \big[ \frac{g_N}{T} \big] - d_N^*/(\gamma-1) \big[ \frac{1}{T} \big] \\
            \sum_{k=1}^NY_k^* \big[\frac{g_k}{T}\big] - \big[\frac{1}{T}\big]h^* \\
            \sum_{k=1}^NY_k^* \big[\frac{g_k}{T}\big] - \big[\frac{1}{T}\big]h^*
        \end{bmatrix}.
\end{equation*}
$|u^*| = 0$ so the product of the eigenvalue matrix $|\Lambda|$ and the squared scaling matrix $T^2$ simplifies to:
\begin{equation*}
    |\Lambda| T^2 = \frac{\rho^* a^*}{2 \gamma^* r^*}
            \begin{bmatrix}
                0 & 0 & 0 & 0 \\
                0 & 0 & 0 & 0 \\
                0 & 0 & 1 & 0 \\
                0 & 0 & 0 & 1
            \end{bmatrix}.
\end{equation*}
Therefore, the dissipation term $R |\Lambda|T^{2}R^{T}[\mathbf{v}]$ cancels out if:
\begin{equation*}
    \sum_{k=1}^NY_k^* \bigg[\frac{g_k}{T}\bigg] - \bigg[\frac{1}{T}\bigg]h^* = 0.
\end{equation*}
This is equivalent to:
\begin{equation}\label{eq:SC1}
    \sum_{k=1}^N\rho_k^* \bigg( e_{0k}\bigg[ \frac{1}{T}\bigg] -\big[s_k\big] \bigg) - \bigg[\frac{1}{T}\bigg] \rho^* h^* = 0
\end{equation}
Using:
\begin{equation*}
    -[s_k] = c_{vk} \bigg[\frac{1}{T}\bigg]\frac{1}{(1/T)^{ln}} + r_k \frac{[\rho_k]}{\rho^{ln}},
\end{equation*}
equation (\ref{eq:SC1}) can then be rewritten as:
\begin{equation}\label{eq:SC2}
    \sum_{k=1}^N \bigg(\frac{\rho_k^*}{\rho_k^{ln}}\bigg) r_k[\rho_k]  - \bigg[\frac{1}{T}\bigg] \bigg(\rho^* h^* - \sum_{k=1}^N\rho_k^* \bigg(e_{0k} + c_{vk}\frac{1}{(1/T)^{ln}}\bigg)\bigg)= 0
\end{equation}
The ideal gas law along with the assumption of constant pressure allows us to relate the jumps in partial densities and temperature:
\begin{equation}\label{eq:cons_p}
    \sum_{k=1}^Nr_k [\rho_k] = \overline{p} \bigg[\frac{1}{T}\bigg].
\end{equation}
If $\rho_k^* = \rho_k^{ln}$, then using equation (\ref{eq:cons_p}), equation (\ref{eq:SC2}) simplifies to:
\begin{equation*}
    \bigg[ \frac{1}{T} \bigg] \bigg( \bar{p} + \sum_{k=1}^N\rho_k^* \bigg( e_{0k} + c_{vk} \frac{1}{(1/T)^{ln}}\bigg) - \rho^* h^* \bigg) = 0.
\end{equation*}
This leads to the condition (\ref{eq:StatContact})
\end{proof}
The remaining averages are taken as $u^* = \overline{u}, \ T^* = 1/(1/T)^{ln}, \ r^* = \bar{r}, \ \gamma^* = \overline{\gamma}$ and $a^* = \sqrt{\gamma^* r^* T^*}$. 
\subsection{Additional considerations}
\subsubsection{Time integration}\label{sec:TE}
\indent The first-order finite volume scheme we derived is ES at the semi-discrete level only. Entropy stability or entropy conservation at the fully discrete level can be obtained using a variety of techniques \cite{Barth, LeFloch, Tadmor_acta, DeepThesis, Diosady, Gouasmi1, Gouasmi2, Fried} which can be applied to the multicomponent compressible Euler system. However, this typically requires implicit time-integration schemes. For simplicity, we use explicit Runge-Kutta schemes in time, which do not guarantee entropy stability at the fully discrete level. \\
\indent For the sake of completeness, is it worthwhile to recall the two fundamental results by Tadmor \cite{Tadmor_acta} at the fully discrete level. Denote $\lambda = \Delta t / \Delta x$, and consider the Forward Euler (FE) scheme in time applied to (\ref{eq:FVM}):
\begin{equation}\label{eq:FVM_FE}
    \mathbf{u}_j^{n+1} - \mathbf{u}_j^{n} + \lambda (\mathbf{f}_{j+\frac{1}{2}}^{n} - \mathbf{f}_{j-\frac{1}{2}}^{n}) = 0,
\end{equation}
where the interface flux can be either EC or ES. Tadmor showed that (\ref{eq:FVM_FE}) implies the following equation for entropy:
\begin{equation}
    U(\mathbf{u}_j^{n+1}) - U(\mathbf{u}_j^{n}) + \lambda (F_{j+\frac{1}{2}}^{n} - F_{j-\frac{1}{2}}^{n}) = - \mathcal{E}_j^{n} + \mathcal{E}_j^{FE}, \ \mathcal{E}_j^{FE} > 0.
\end{equation}
If the interface flux is EC, $\mathcal{E}_j^{n} = 0$ and the fully discrete scheme violates the entropy inequality in every cell. The numerical solution will develop non-physical oscillations growing in time. If the interface flux is ES, then the fully discrete entropy stability depends on the relative magnitudes of $\mathcal{E}^{n}_{j}$ (entropy production in space) and $\mathcal{E}^{FE}$ (entropy loss in time). In the scalar case, time step conditions (similar to CFL conditions) such that the fully discrete scheme is entropy-stable can be derived \cite{Zakerzadeh}. The systems case is more complicated. In practice, a small enough time step is enough to ensure that the entropy losses in time do not overtake the entropy produced in space. \\
\indent Tadmor also showed that the Backward Euler (BE) scheme in time applied to (\ref{eq:FVM})
\begin{equation}\label{eq:FVM_BE}
    \mathbf{u}_j^{n+1} - \mathbf{u}_j^{n} + \lambda (\mathbf{f}_{j+\frac{1}{2}}^{n+1} - \mathbf{f}_{j-\frac{1}{2}}^{n+1}) = 0,
\end{equation}
implies:
\begin{equation}\label{eq:FVM_ES_BE}
    U(\mathbf{u}_j^{n+1}) - U(\mathbf{u}_j^{n}) + \lambda (F_{j+\frac{1}{2}}^{n+1} - F_{j-\frac{1}{2}}^{n+1}) = - \mathcal{E}_j^{n+1} - \mathcal{E}_j^{BE}, \ \mathcal{E}_j^{BE} > 0.
\end{equation}
In this case, the entropy inequality is always satisfied, even if an EC flux is used in space. The expressions for $\mathcal{E}_j^{BE}$ and $\mathcal{E}_j^{FE}$ can be found in \cite{Tadmor_acta} (section 7).

\subsubsection{Positivity}\label{sec:positive}
\indent We showed in sections \ref{sec:EC_flux} and \ref{sec:ES_flux} that despite the fact that the entropy variables are undefined in the limit $\rho_k \rightarrow 0$, the EC flux is well-defined and that the ES flux remains well-defined if the averaged partial densities are properly chosen (we show a similar result for high-order TecNO schemes in section \ref{sec:TecNO}). This does not guarantee that the resulting scheme will not produce negative densities and/or negative pressures. In the one-dimensional test problems, the first order semi-discrete ES scheme fortunately did not produce negative densities or pressure but the high-order TecNO scheme (section \ref{sec:TecNO}) systematically did. In the original setup of the two-dimensional shock-bubble interaction problem \cite{SBI_Marquina, Quirk}, the first-order scheme had the same issue. \\
\indent There are schemes which are conservative, entropy stable and can preserve the correct sign of density and pressure. The Godunov scheme as well as the Lax-Friedrichs scheme qualify if an appropriate CFL condition is met \cite{Tadmor2}. Note also the recent work of Guermond \textit{et al.} \cite{Guermond_IDP, Guermond_IDP2}. A common trait of these schemes is that they take root in the notion of a Riemann problem and the assumption that there exists a solution satisfying all entropy inequalities. These schemes also require an algorithm capable of computing the maximum speed of propagation (or an upper bound) for the Riemann problem at each interface. One such algorithm is discussed in Guermond \& Popov \cite{Guermond_speed}. We could in principle adopt a hybrid approach where these schemes are used in areas where ours fails to maintain positive densities and pressure. Whether this can effectively be accomplished is left for future work. 

\subsubsection{Construction for thermally perfect gases}
Here we briefly discuss the construction of an ES scheme in the case where the specific heats are not constant but functions of temperature. In this configuration, the specific internal energies and entropies are defined as: 
\begin{equation*}
    e_k := e_{0k} + \int_{0}^T c_{vk}(\tau)d\tau, \ s_k := \int_{0}^T \frac{c_{vk}(\tau)}{\tau}d\tau - r_k \ln \rho_k.    
\end{equation*}
The structure that ES schemes build on \cite{Giovangigli, Chalot} is still present. The multicomponent system still admits an additional conservation equation for the thermodynamic entropy of the mixture $\rho s$ and $(U, F) = (-\rho s, \ -\rho u s)$ is a valid entropy pair for $\rho_k > 0, \ T > 0$. In practice, the specific heats are represented using polynomial interpolation:
\begin{equation*}
    c_{vk} := c_{k0} + \sum_{j=1}^P c_{kj}T^{j},
\end{equation*}
where $c_{kj}, \ 0 \leq j \leq P$ are constants. This gives:
\begin{equation*}
    e_k = e_{0k} + c_{k0} T + \sum_{j=1}^p \frac{c_{kj}}{j+1}T^{j+1}, \ s_k = c_{k0}\ln T + \sum_{j=1}^P \frac{c_{kj}}{j} T^{j} - r_k \ln \rho_k.
\end{equation*}
The expressions of the entropy variables and potential functions remain unchanged. Regarding the construction of the EC flux, we have for $1 \leq k \leq N$:
\begin{equation}\label{eq:dvk}
    [v_{1,k}] = \bigg[ \frac{1}{T}\bigg(g_k - \frac{u^2}{2}\bigg)\bigg] = \bigg[ \frac{e_k}{T} - s_k \bigg] - \bigg[ \frac{u^2}{2T}\bigg] = e_{0k} \bigg[\frac{1}{T} \bigg] - c_{k0} [\ln T] - \sum_{j=1}^P \frac{c_{kj}}{j (j+1)}[T^{j}] + r_k [\ln \rho_k] - \frac{\overline{u^2}}{2}\bigg[\frac{1}{T}\bigg] - \overline{\frac{1}{T}}\overline{u}[u]
\end{equation}
For a given quantity $a$, let's define the product operator $a^{\times} = a_L a_R$.  For $a \neq 0$, we have the following identity:
\begin{equation*}
    [a] = - a^{\times} \ \bigg[\frac{1}{a}\bigg]
\end{equation*}
It can be easily showed, by induction for instance, that for $j \geq 1$, there exists an averaging operator $f_j(a)$, consistent with $a^{j-1}$, such that $[a^{j}] = j  f_j(a) [a]$ ($f_1(a) = 1$, $f_2(a) = \overline{a}$, $f_3(a) = (2/3)\overline{a} \ \overline{a} + (1/3)\overline{a^2}$ and so on using product rules). Equation (\ref{eq:dvk}) can therefore be rewritten as:
\begin{align*}
    [v_{1,k}] =& \ e_{0k} \bigg[\frac{1}{T} \bigg] - c_{k0} [\ln T] + \sum_{j=1}^P \frac{c_{kj}}{(j+1)} (f_j(T) T^{\times}) \bigg[ \frac{1}{T} \bigg] + r_k [\ln \rho_k]  - \frac{\overline{u^2}}{2}\bigg[\frac{1}{T}\bigg] - \overline{\frac{1}{T}}\overline{u}[u] \\
              =& \ \bigg[ \frac{1}{T} \bigg] \bigg( 
              e_{0k} + c_{k0} \frac{1}{(1/T)^{ln}} + \sum_{j=1}^P \frac{c_{kj}}{(j+1)} (f_j(T) T^{\times})  - \frac{\overline{u^2}}{2} \bigg) + [\rho_k] \frac{r_k}{\rho_k^{ln}} - [u] \overline{\bigg(\frac{1}{T}\bigg)}\overline{u}
\end{align*}
Repeating the procedure outlined in section \ref{sec:EC_flux}, we obtain an EC flux:
\begin{align}\label{eq:ECfluxMC_TPG}
    f_{1,k} =& \  \rho_{k}^{ln}\overline{u}, \nonumber \\
    f_2     =& \ \frac{1}{\overline{1/T}} \bigg(\sum_{k=1}^Nr_k \overline{\rho_k}\bigg) + \overline{u} \sum_{k=1}^Nf_{1,k}, \\
    f_3     =& \ \sum_{k=1}^N \bigg(e_{0k} + c_{k0}\frac{1}{(1/T)^{ln}} + \sum_{j=1}^P \frac{c_{kj}}{(j+1)} (f_j(T) T^{\times}) - \frac{1}{2}\overline{u^2} \bigg) f_{1,k} + \overline{u} f_2. \nonumber
\end{align}
which is consistent ($f_j(T)T^{\times}$ is consistent with $T^{j+1}$) and differs from the expression we obtained in the calorically perfect gas case (equation (\ref{eq:ECfluxMC})) in the total energy component only. \\
\indent We will not go into the details of the upwind dissipation operator. Barth's eigenscaling theorem applies because $H$ still symmetrizes $A$ from the right. Therefore  the scaling matrix exists ($T^{2} = R^{-1}HR^{-T}$) and the dissipation operator constructed in section \ref{sec:ES_flux} can be constructed for thermally perfect gases as well.
\subsubsection{Total mass form}
\indent Instead of solving the conservation equations for the N partial densities $\rho_k$, one might want to solve for the conservation of the total density $\rho$ and N-1 partial densities. The state and flux vectors are then:
\begin{equation*}
    \mathbf{\tilde{u}} := \begin{bmatrix} \rho & \rho_2  & \hdots & \rho_N & \rho u & \rho e^t \end{bmatrix}^T, \
    \mathbf{\tilde{f}} := \begin{bmatrix} \rho u & \rho_2 u & \hdots & \rho_N u & \rho u^2 + p & (\rho e^t + p)u \end{bmatrix}^T.
\end{equation*}
The entropy variables in this configuration can be easily obtained using a chain rule:
\begin{equation*}
    \mathbf{\tilde{v}}^T = -\frac{\partial \rho s}{\partial \mathbf{\tilde{u}}} = - \frac{\partial \rho s}{\partial \mathbf{u}} \bigg( \frac{\partial \mathbf{\tilde{u}}}{\partial \mathbf{u}} \bigg)^{-1} = \mathbf{v}^T \bigg( \frac{\partial \mathbf{\tilde{u}}}{\partial \mathbf{u}} \bigg)^{-1} = \mathbf{v}^T \bigg( \frac{\partial \mathbf{u}}{\partial \mathbf{\tilde{u}}} \bigg)
\end{equation*}
$\mathbf{\tilde{u}}$ and $\mathbf{u}$ only differ in the first component, $ \rho_1 = \rho - \sum_{k=2}^{N} \rho_k$ therefore:
\begin{equation*}
    \frac{\partial \mathbf{u}}{\partial \mathbf{\tilde{u}}} =  \begin{bmatrix}
        1 &   -1    & \hdots & -1 & 0 & 0 \\
          &    1    &        &  0 & 0 & 0 \\
          &         & \ddots &  0 & 0 & 0 \\
        0 &    0    & \hdots &  1 & 0 & 0 \\
        0 &    0    & \hdots &  0 & 1 & 0 \\
        0 &    0    & \hdots &  0 & 0 & 1
    \end{bmatrix}, \
    \mathbf{\tilde{v}} =
    \frac{1}{T} \begin{bmatrix}
    (g_1 - \frac{1}{2}u^2) & (g_2 - g_1) & \hdots & (g_N - g_1) & u & -1
    \end{bmatrix}^T.
\end{equation*}
The corresponding potential functions are unchanged because:
\begin{equation*}
    \mathbf{\tilde{v}} \cdot \mathbf{\tilde{f}} = \mathbf{v}^T \bigg( \frac{\partial \mathbf{u}}{\partial \mathbf{\tilde{u}}} \bigg) \bigg(  \frac{\partial \mathbf{\tilde{u}}}{\partial \mathbf{u}} \bigg) \mathbf{f} = \mathbf{v} \cdot \mathbf{f}, \ \mathbf{\tilde{v}} \cdot \mathbf{\tilde{u}} = \mathbf{v}^T \bigg( \frac{\partial \mathbf{u}}{\partial \mathbf{\tilde{u}}} \bigg) \bigg(  \frac{\partial \mathbf{\tilde{u}}}{\partial \mathbf{u}} \bigg) \mathbf{u} = \mathbf{v} \cdot \mathbf{u}
\end{equation*}
Accordingly, an EC flux $\mathbf{\tilde{f}_{EC}}$ for the total mass form can simply be obtained by mapping an EC flux in the first form $\mathbf{f_{EC}}$:
\begin{equation*}
    \mathbf{\tilde{f}_{EC}} = \bigg(\frac{\partial \mathbf{\tilde{u}}}{\partial \mathbf{u}}\bigg) \mathbf{f}_{EC}.
\end{equation*}
Likewise, applying the same mapping to an ES flux $\mathbf{f^*}$ in the first form given by:
\begin{equation*}
    \mathbf{f^*} = \mathbf{f_{EC}} - D [\mathbf{v}]
\end{equation*}
where $D$ is positive definite, results in a flux $\mathbf{\tilde{f}^*}$ given by:
\begin{equation*}
   \mathbf{\tilde{f}^*} = \bigg(\frac{\partial \mathbf{\tilde{u}}}{\partial \mathbf{u}}\bigg) \mathbf{f^*} = \mathbf{\tilde{f}_{EC}} - \bigg(\frac{\partial \mathbf{\tilde{u}}}{\partial \mathbf{u}}\bigg) D [\mathbf{v}] = \mathbf{\tilde{f}_{EC}} - \bigg(\frac{\partial \mathbf{\tilde{u}}}{\partial \mathbf{u}}\bigg) D \bigg(\frac{\partial \mathbf{\tilde{u}}}{\partial \mathbf{u}}\bigg)^T  [\mathbf{\tilde{v}}] = \mathbf{\tilde{f}_{EC}} - \tilde{D}  [\mathbf{\tilde{v}}].
\end{equation*}
$\tilde{D}$ is positive definite by congruence therefore $\mathbf{\tilde{f}^*}$ is entropy stable. 

\section{High-order discretizations}
\indent In this section, we are essentially interested in how the fundamental issues highlighted in the previous sections manifest in a high-order setting. We examine two high-order ES formulations: TecNO schemes \cite{Fjordholm} and Discontinuous Galerkin \cite{Reed, Cockburn} (DG) schemes discretizing the entropy variables \cite{Barth, Hughes}. High-order ES schemes are not limited to these two options (formulations based on Summation-By-Parts operators \cite{Fisher, Fried} for instance are actively being developed), but the issues their formulation raises are no different. 
\subsection{Discontinuous Galerkin}
\indent In \cite{Hughes}, Hughes \textit{et al.} showed, under the assumption of exact numerical quadrature, that continuous finite element solutions to the compressible Navier-Stokes equations become consistent with the entropy equation when the entropy variables are discretized instead of conservative variables. Furthermore, they showed that with suitably defined dissipation operators, the discrete solution satisfies the Clausius-Duhem inequality. \\
\indent For Discontinuous Galerkin discretizations of the Compressible Euler Equations, the same result follows if ES fluxes are used. Yet, the formulation of ES DG schemes by Barth \cite{Barth} does not proceed along these lines. The entropy variables are discretized but entropy-stable fluxes are defined in a different way. This is discussed in \ref{appendix:Barth}. \\
\indent The fact that the entropy variables are undefined in the limit $\rho_k \rightarrow 0$ poses a daunting problem, unless the flow configuration is such that one can expect $\rho_k > 0$ at all times. A way around this issue (other than not discretizing the entropy variables) has not been found by the authors. There might be other entropy functions for which the corresponding entropy variables are well-defined in this limit. However, in the context of the multicomponent Navier-Stokes equations (including viscous stresses, heat conduction, multicomponent diffusion), the \textit{a priori} entropy stability resulting from discretizing the entropy variables might be lost \cite{Hughes, Giovangigli_Mat}. 
\subsection{TecNO schemes}\label{sec:TecNO}
\indent TecNO schemes (Fjordholm \textit{et al.} \cite{Fjordholm}) are high-order entropy-stable finite volume schemes that combine the high-order EC flux formulation of LeFloch \textit{et al.} \cite{LeFloch}, the stencil selection procedure of ENO schemes \cite{HartenENO, ShuOsher} and entropy-stable dissipation operators \cite{Tadmor, Roe1}. \\
\indent The dissipation term of a first order ES flux typically takes the form:
\begin{equation*}
    D [\mathbf{v}] = (R T) |\Lambda| (R T)^T [\mathbf{v}],
\end{equation*}
where $R$ is the matrix of right eigenvectors of $A$ and $T$ is a scaling matrix. The reconstruction used by TecNO schemes is motivated by the sign property of the ENO reconstruction. It was shown \cite{Fjordholm2} that for any vector $\mathbf{w} \in \mathbb{R}^{N}$, the ENO reconstruction is such that:
\begin{equation*}
    \left<\mathbf{w}\right> = B [\mathbf{w}], \ B = diag([b_0, \ \dots, \ b_{N-1}]), \ b_i \geq 0,
\end{equation*}
where $\left< \mathbf{w} \right>$ and $[\mathbf{w}]$ are the reconstructed and initial jumps, respectively. Let $\mathbf{w}$ be such that $[\mathbf{w}] = (RT)^T [\mathbf{v}]$, then the dissipation operator $\Tilde{D} [\mathbf{w}] = (RT) |\Lambda| \left< \mathbf{w} \right>$ is ES because:
\begin{equation*}
    (RT) |\Lambda| \left< \mathbf{w} \right> = (RT) |\Lambda| B [\mathbf{w}] = (RT) (|\Lambda| B) (RT)^T [\mathbf{v}]
\end{equation*}
$|\Lambda| B$ is a positive diagonal matrix, therefore the high-order dissipation operator $\Tilde{D} [ \mathbf{w} ]$ is ES. The TecNO approach does require the knowledge of the scaled eigenvectors $(RT)$. If the dissipation operator is only known in the form:
\begin{equation*}
    D[\mathbf{v}] = R |\Lambda| T^2 R^{T}[\mathbf{v}],
\end{equation*}
then choosing $\mathbf{w}$ such that $[\mathbf{w}] = (T^2 R^T)[\mathbf{v}]$ for the ENO reconstruction will result in a high-order dissipation operator:
\begin{equation*}
    \tilde{D} [ \mathbf{w} ] =  R |\Lambda| \left< \mathbf{w} \right> = R (|\Lambda| B T^2) R^T [\mathbf{v}]. 
\end{equation*}
$|\Lambda|$, $B$ and $T^2$ are all symmetric and at least positive semi-definite, however their product is no necessarily positive semi-definite. If the squared scaling matrix is diagonal then entropy stability is preserved. In the general case of a block-diagonal scaling matrix, entropy stability is no longer ensured. The same problem would arise if $\mathbf{w}$ was chosen such that $[\mathbf{w}] = R^{T}[\mathbf{v}]$ (the question would be whether $(|\Lambda| T^2 B)$ is positive definite). \\
\indent If the dissipation operator is expressed according to equation (\ref{eq:ESflux}) with the pseudo-scaling matrix $\mathcal{T}$ given by equation (\ref{eq:fake_tilde}), the vector of reconstructed variables $\mathbf{w}$ such that $ = [\mathbf{w}] = (R\mathcal{T})^T [\mathbf{v}]$ will have as many components as the number of rows of $\mathcal{T}$. For $N > 2$, the number of rows of $\mathcal{T}$ grows as $\mathcal{O}(N^2)$, so for a large enough $N$, one might prefer working with $T$ instead of $\mathcal{T}$ and avoid having to reconstruct too many variables. \\
\indent In section \ref{sec:ES_flux}, we showed that the dissipation term expressed as $R |\Lambda| T^2 R^{T}$ is well defined in the limit $\rho_k \rightarrow 0$ provided that $\rho_k^* = \rho_k^{ln}$. This was possible because one could extract from $T^2$ a diagonal matrix $\mathcal{R}$ (see equation (\ref{eq:R})) of partial densities. The TecNO algorithm requires the isolated evaluation of the scaled entropy variables defined by the jump relation $[\mathbf{w}] = (R \mathcal{T})^{T}[\mathbf{v}]$ or $[\mathbf{w}] = (R T)^{T}[\mathbf{v}]$. The matrix $\mathcal{R}$ that was extracted from the squared scaling matrix, might not be extracted from $T$ or $\mathcal{T}$ without leaving $1/\rho_k^*$ terms behind. However, $\mathcal{R}^{1/2}$ can be extracted from the pseudo-scaling matrix $\mathcal{T}$ and since:
\begin{equation*}
    [\ln(\rho_k)] = [2 \ \ln(\sqrt{\rho_k})] = 2 \ \frac{[\sqrt{\rho_k}]}{(\sqrt{\rho_k})^{ln}},
\end{equation*}
it can be shown that $[\mathbf{w}] = (R \mathcal{T})^T[\mathbf{v}]$ is well-defined in the limit $\rho_k \rightarrow 0$, provided that $\rho_k^* = ((\sqrt{\rho_k})^{ln})^2$. The decomposition:
\begin{equation}\label{eq:R_half}
    T^2 = \mathcal{R}^{1/2} \widetilde{T}^2 (\mathcal{R}^{1/2})^T, \ \widetilde{T}^2 = \frac{1}{\gamma r}\begin{bmatrix}
            (\gamma-1) Y_1 + \gamma r_2/r_1 Y_2  & -\sqrt{Y_1 Y_2}  & 0 & 0 \\
            -\sqrt{Y_1 Y_2} & (\gamma-1) Y_2 + \gamma r_1/r_2 Y_1 & 0 & 0 \\
            0 & 0 & \frac{1}{2} & 0 \\
            0 & 0 & 0 & \frac{1}{2}
          \end{bmatrix},
\end{equation}
suggests that a TecNO reconstruction based on the scaling matrix $T$ would still be defined in the limit with the same averaging. Note that this average is not compatible with the stationary contact preservation condition (\ref{eq:StatContact}) we derived in section \ref{sec:ES_flux}. That is because equation (\ref{eq:StatContact}) requires $\rho_k^{*} = \rho_k^{ln}$. 
\section{Numerical experiments}\label{sec:cases}
\indent In this section, we present and discuss numerical results on 1D and 2D test problems involving interfaces and shocks. A 3D formulation of the scheme (entropy variables, EC flux, ES flux) is provided in \ref{appendix:3D}. \\
\indent In the 1D problems, the first-order finite volume scheme with the ES flux in space and forward Euler in time with a CFL of 0.3 on three grids (100, 300, and 1000 cells, respectively) was applied. All the figures in the next three subsections use the same legend as figure \ref{fig:1D_moving_interface_velocity}-(a). A fourth-order TecNO scheme is used in the 2D problem (section \ref{sec:SBI}).
\subsection{Moving interface}
\indent The first test problem is the advection of a contact discontinuity (constant velocity and constant pressure) separating two different species. The initial conditions are given by:
\begin{align*}
\begin{cases}
    (\rho_1, \ \rho_2, \ u, \ p) =& (0.1, \ 0.0, \ 1.0, \ 1.0), \ 0 \leq x \leq 0.5, \\
    (\rho_1, \ \rho_2, \ u, \ p) =& (0.0, \ 1.0, \ 1.0, \ 1.0), \ 0.5 < x \leq 1 .
\end{cases}
\end{align*}
with $\gamma_1 = 1.4, \ \gamma_2 = 1.6$, $c_{v1} = c_{v2} = 1$. The velocity and pressure profiles at $t = 0.022 s$ and $t = 0.1s$ are shown in figures \ref{fig:1D_moving_interface_velocity} and \ref{fig:1D_moving_interface_pressure}. These profiles show overshoots and undershoots that are typically observed with conservative schemes (see figure 4 in Abgrall \& Karni \cite{Abgrall} for instance). These anomalies are often termed oscillations in the literature \cite{Abgrall, Karni, Abgrall1, SBI_Marquina, SBI_Johnsen}. Upon closer examination, we can see three wave packets propagating at different speeds. The one moving to the left has the fastest propagation speed, roughly $-3$. The remaining two are moving to the right with propagation speeds of roughly $1$ (the speed of the contact) and $2$. From figure \ref{fig:1D_moving_interface_acoustic}, which shows the acoustic eigenvalues $u \pm a$, it clearly appears that that the first and third wave packets are acoustic. \\
\indent Figure \ref{fig:1D_moving_interface_entropy} shows the total entropy $\rho s = \rho_1 s_1 + \rho_2 s_2$ profiles at $t = 0.02 s$ and $t = 0.1 s$. The wave structure of the pressure and velocity anomalies is more apparent, and we can see that each wave is carrying an spurious increase in entropy. Figure \ref{fig:1D_interface_total_entropy} shows the evolution of the total entropy over time (the contributions from the boundaries were removed). This shows that the anomalies observed do not violate entropy stability. On the contrary, it appears that inappropriate production of entropy is the issue. Additionally, we find (\ref{appendix:interface_BE}) that these anomalies are still present when the upwind dissipation operator is discarded and the Backward Euler scheme is used to ensure entropy stability (recall the discussion in section \ref{sec:TE} and equation (\ref{eq:FVM_ES_BE})). In the same vein, we find that these anomalies do not violate a minimum principle of the specific entropy \cite{Gouasmi4} (see figure \ref{fig:1D_moving_interface_spec_entropy}). \\
\indent That these anomalies are not linked to entropy stability or the minimum entropy principle does not come as a surprise considering that they were already observed with the Godunov scheme \cite{Abgrall, Karni}, which is both entropy stable and satisfies a minimum entropy principle \cite{Tadmor2}. \\
\indent It is important to understand that these anomalies are intrinsic to multicomponent flows. There are no such anomalies on moving contacts in the compressible Euler equations (\ref{appendix:interface_Euler}). Furthermore, these anomalies are produced by first-order schemes. They are therefore different than the oscillations typically observed with high-order schemes when discontinuous solutions are sought. Intrigued readers are strongly encouraged to read early studies of this problem \cite{Karni, Abgrall, Abgrall}. \\
\indent For this problem and the following, high-order TecNO schemes were found to produce non-physical values of density and pressure. 
\begin{figure}[h!]
    \centering
    \subfigure[t = 0.022 s]{\includegraphics[scale = 0.75]{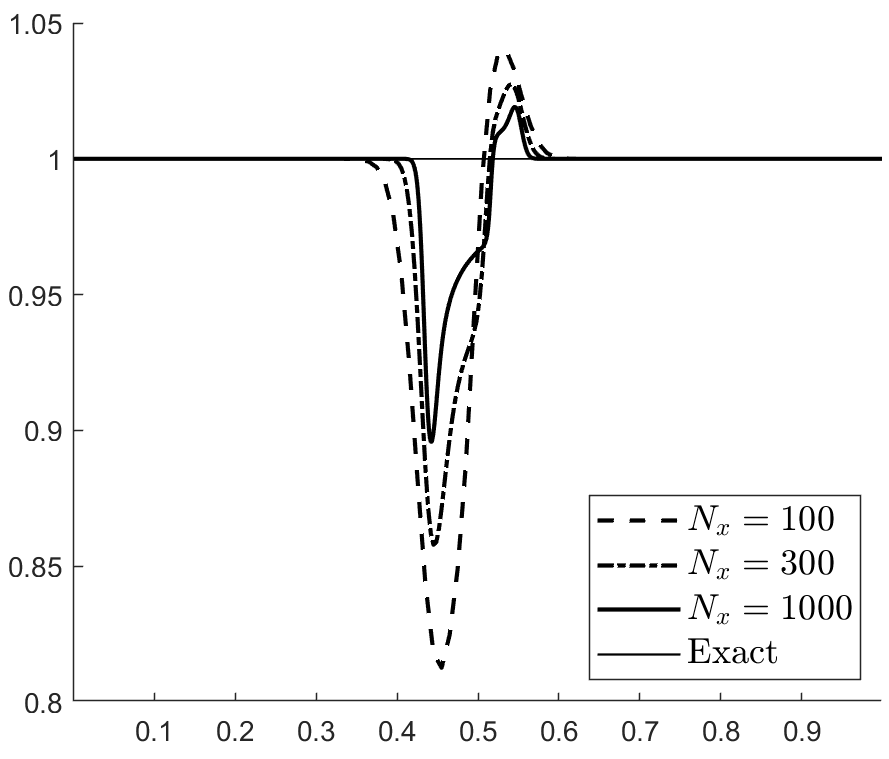}}
    \subfigure[t = 0.1 s]{\includegraphics[scale = 0.75]{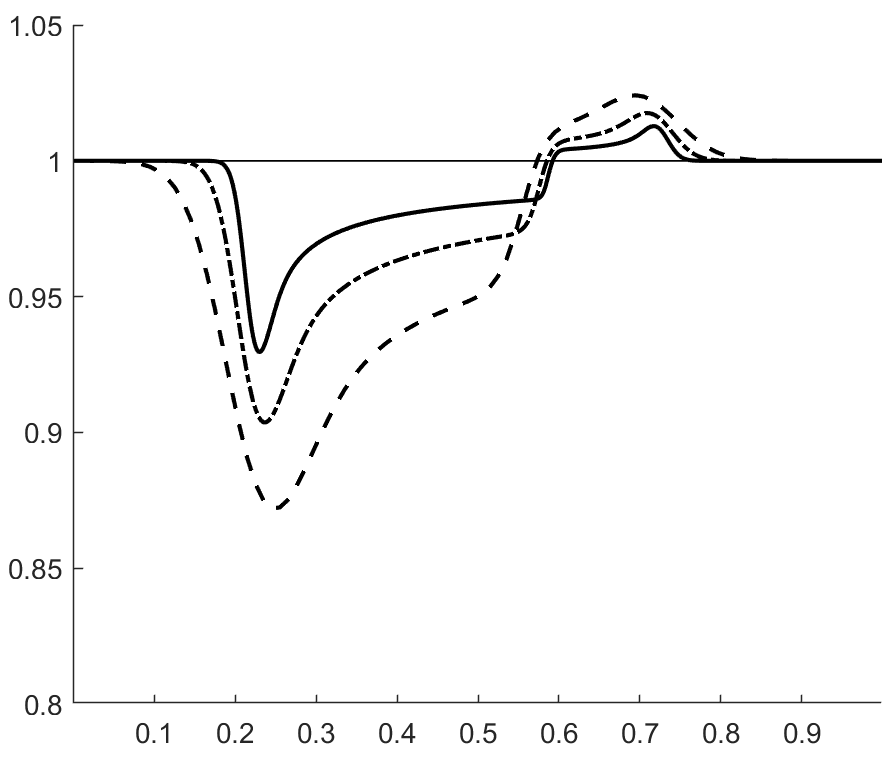}}
    \caption{Velocity profiles for the moving interface problem.}
    \label{fig:1D_moving_interface_velocity}
\end{figure}

\begin{figure}[h!]
    \centering
    \subfigure[t = 0.022 s]{\includegraphics[scale = 0.75]{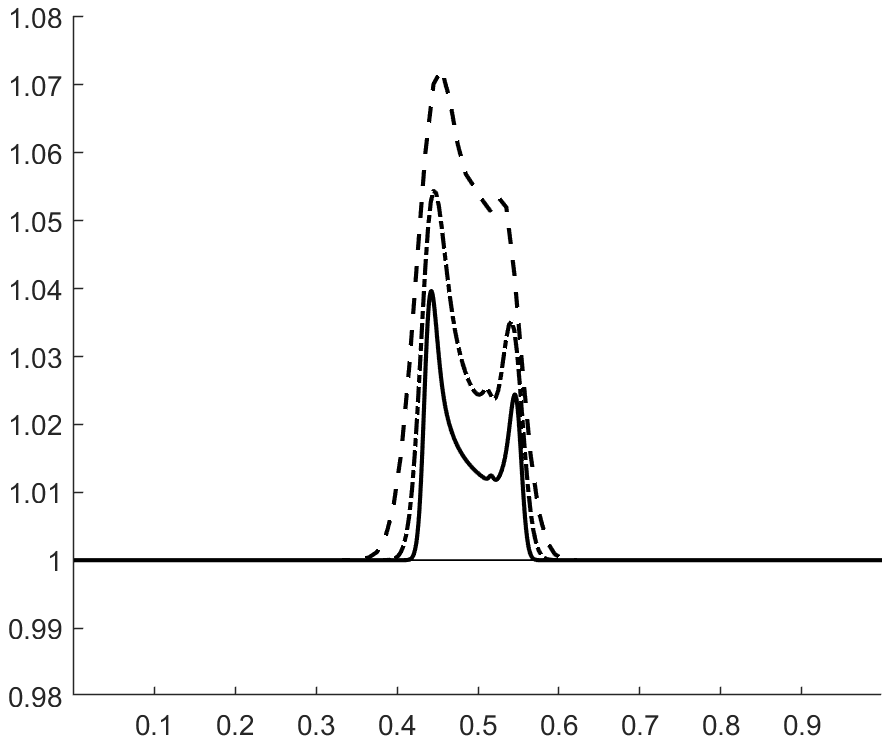}}
    \subfigure[t = 0.1 s]{\includegraphics[scale = 0.75]{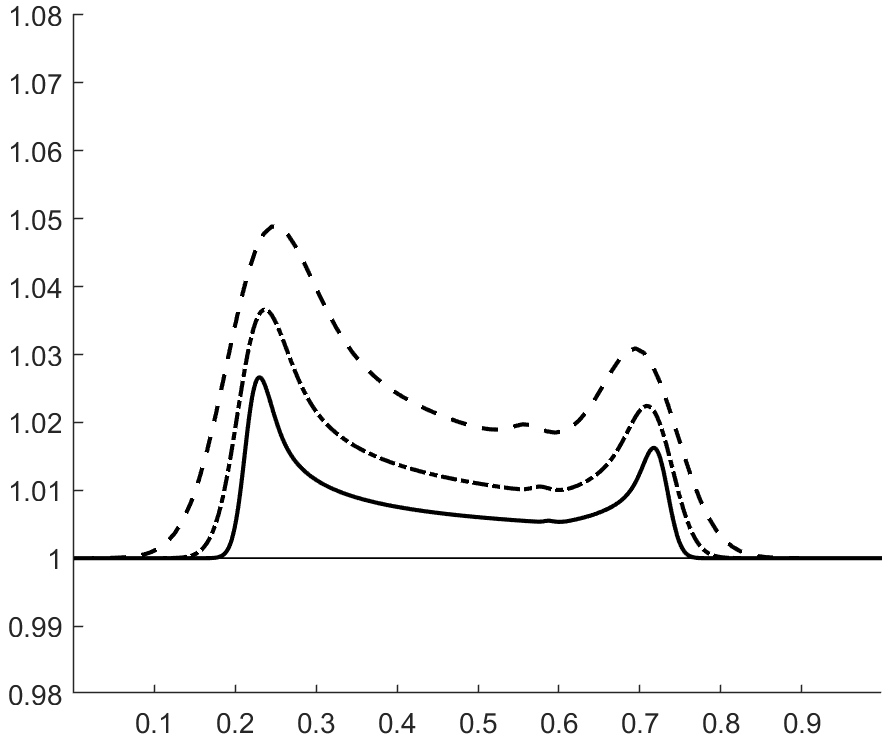}}
    \caption{Pressure profiles for the moving interface problem.}
     \label{fig:1D_moving_interface_pressure}
\end{figure}

\begin{figure}[h!]
    \centering
    \subfigure[$\lambda = u - a$]{\includegraphics[scale = 0.75]{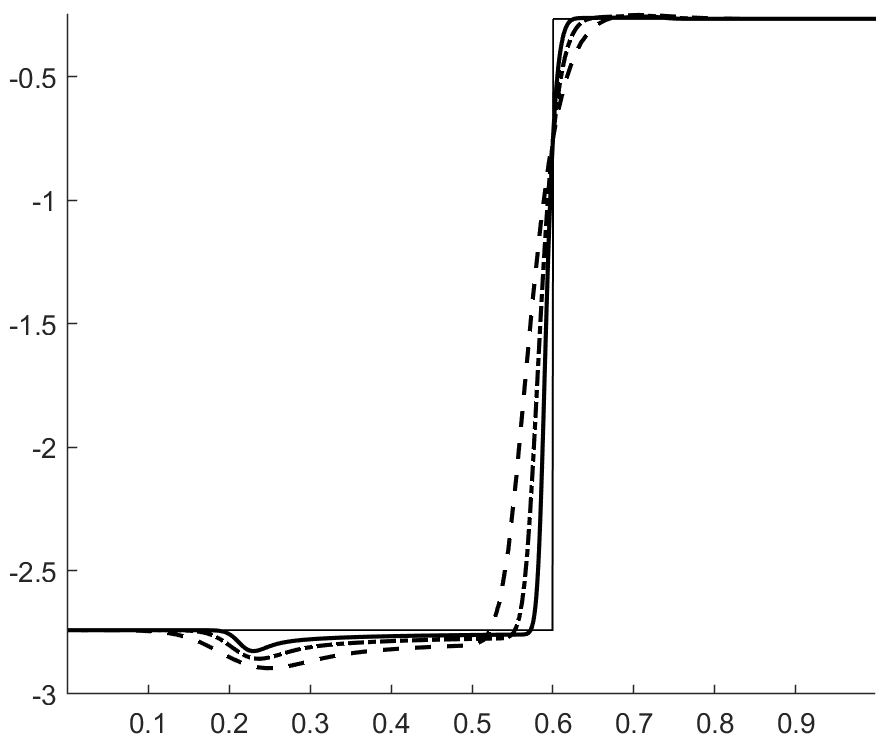}}
    \subfigure[$\lambda = u + a$]{\includegraphics[scale = 0.75]{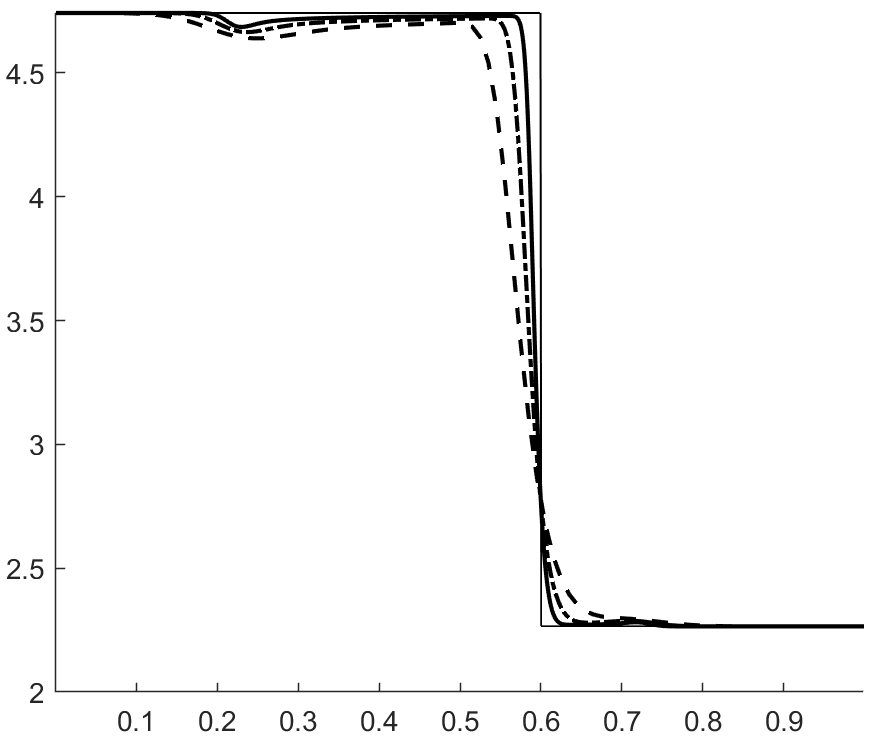}}
    \caption{Acoustic eigenvalues for the moving interface problem at $t = 0.1$ s.}
     \label{fig:1D_moving_interface_acoustic}
\end{figure}

\begin{figure}[h!]
    \centering
    \subfigure[t = 0.022 s]{\includegraphics[scale = 0.75]{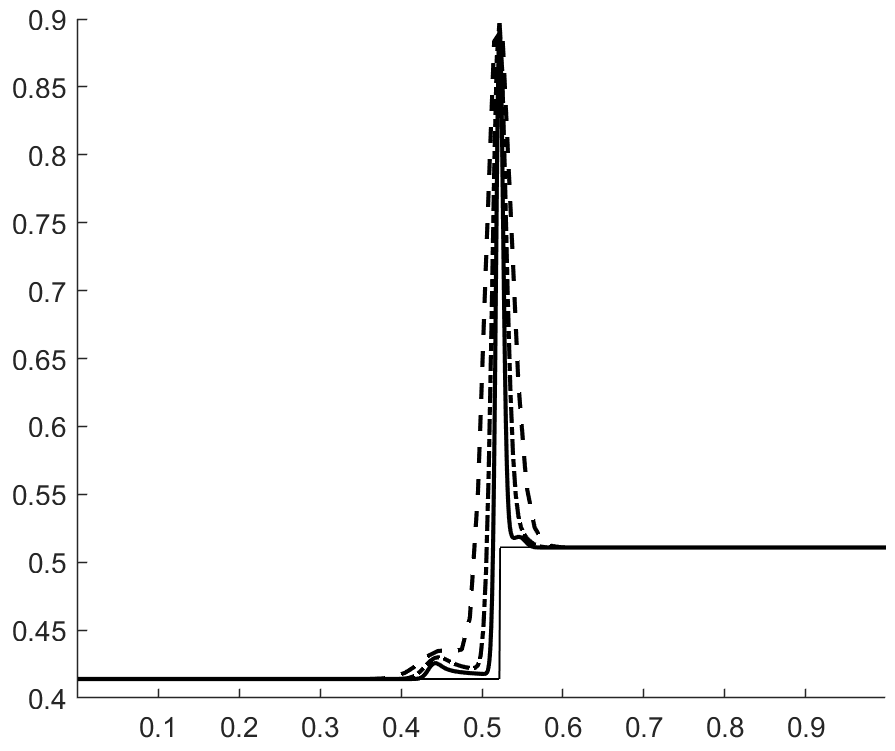}}
    \subfigure[t = 0.1 s]{\includegraphics[scale = 0.75]{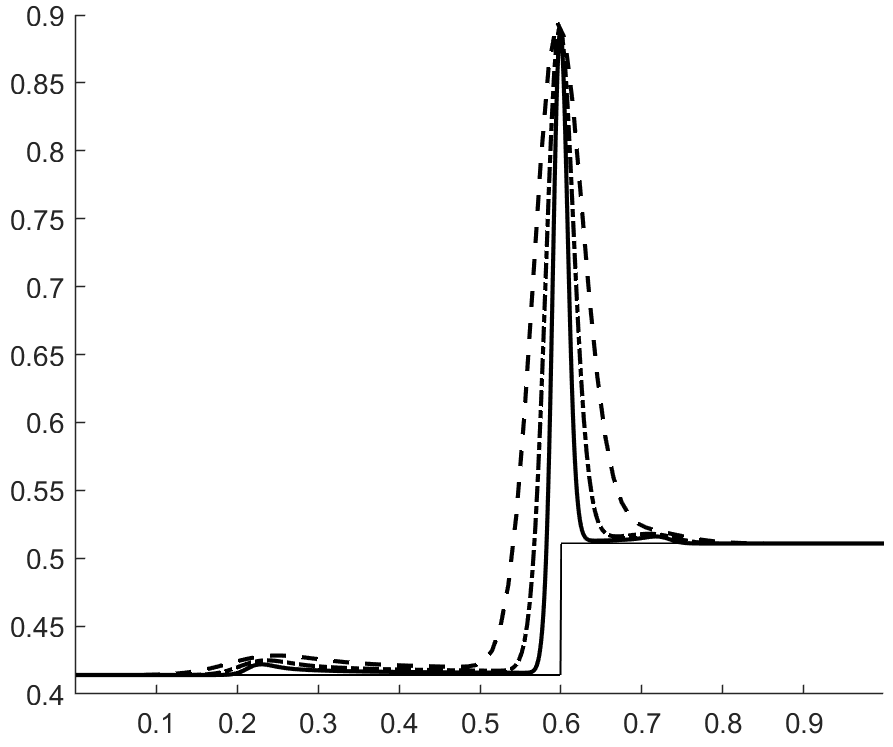}}
    \caption{Entropy ($\rho s = \rho_1 s_1 + \rho_2 s_2$) profiles for the moving interface problem.}
    \label{fig:1D_moving_interface_entropy}
\end{figure}

\begin{figure}[h!]
    \centering
    \subfigure[t = 0.022 s]{\includegraphics[scale = 0.75]{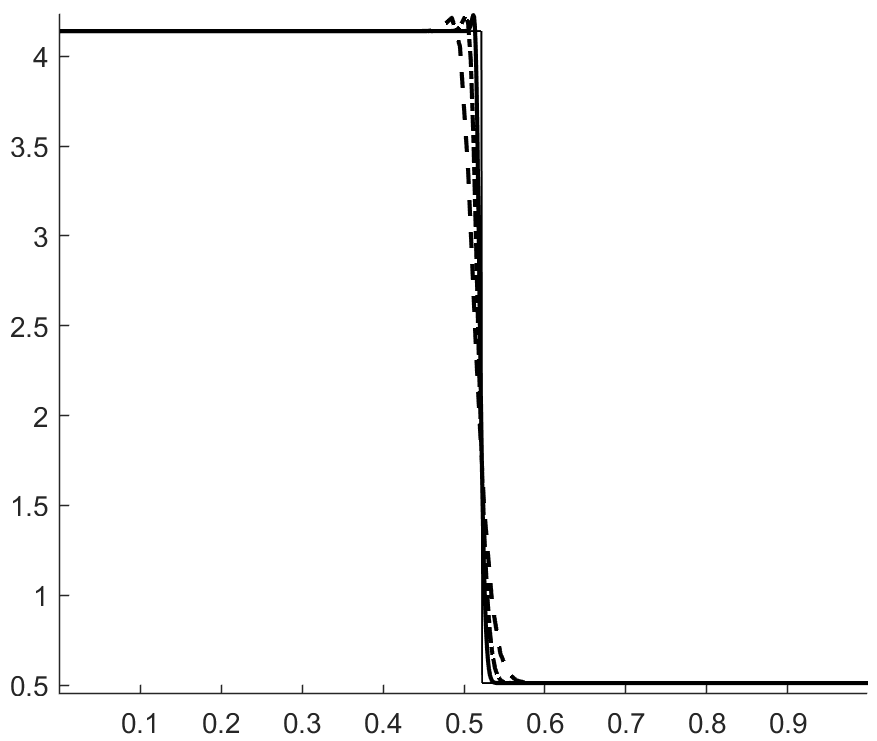}}
    \subfigure[t = 0.1 s]{\includegraphics[scale = 0.75]{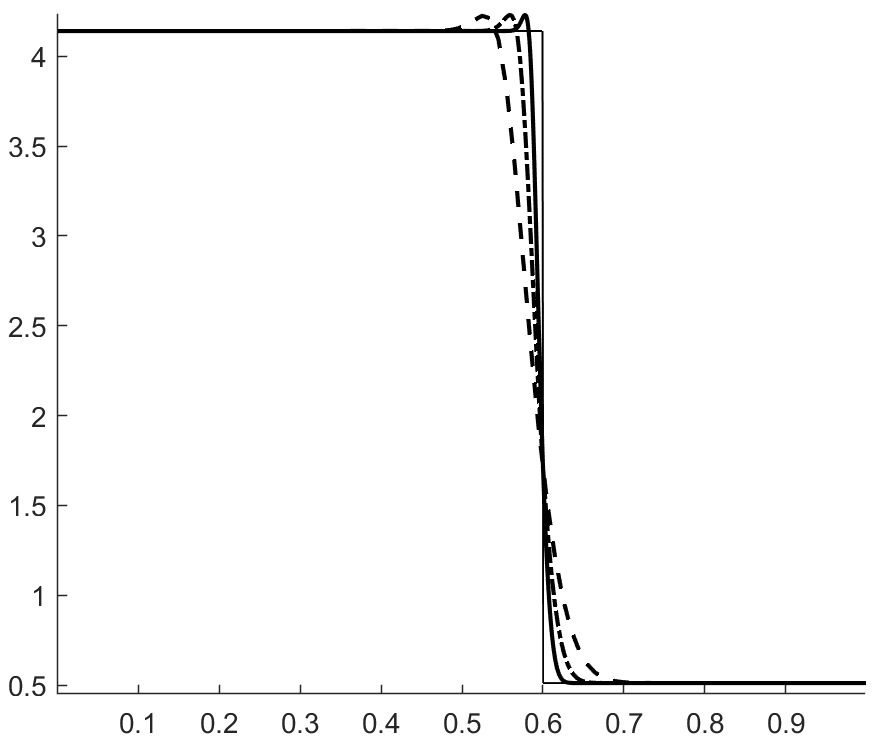}}
    \label{fig:1D_moving_interface_spec_entropy}
    \caption{Specific entropy ($s = Y_1 s_1 + Y_2 s_2$) profiles for the moving interface problem.}
\end{figure}

\begin{figure}[h!]
    \centering
    \includegraphics[scale = 0.65]{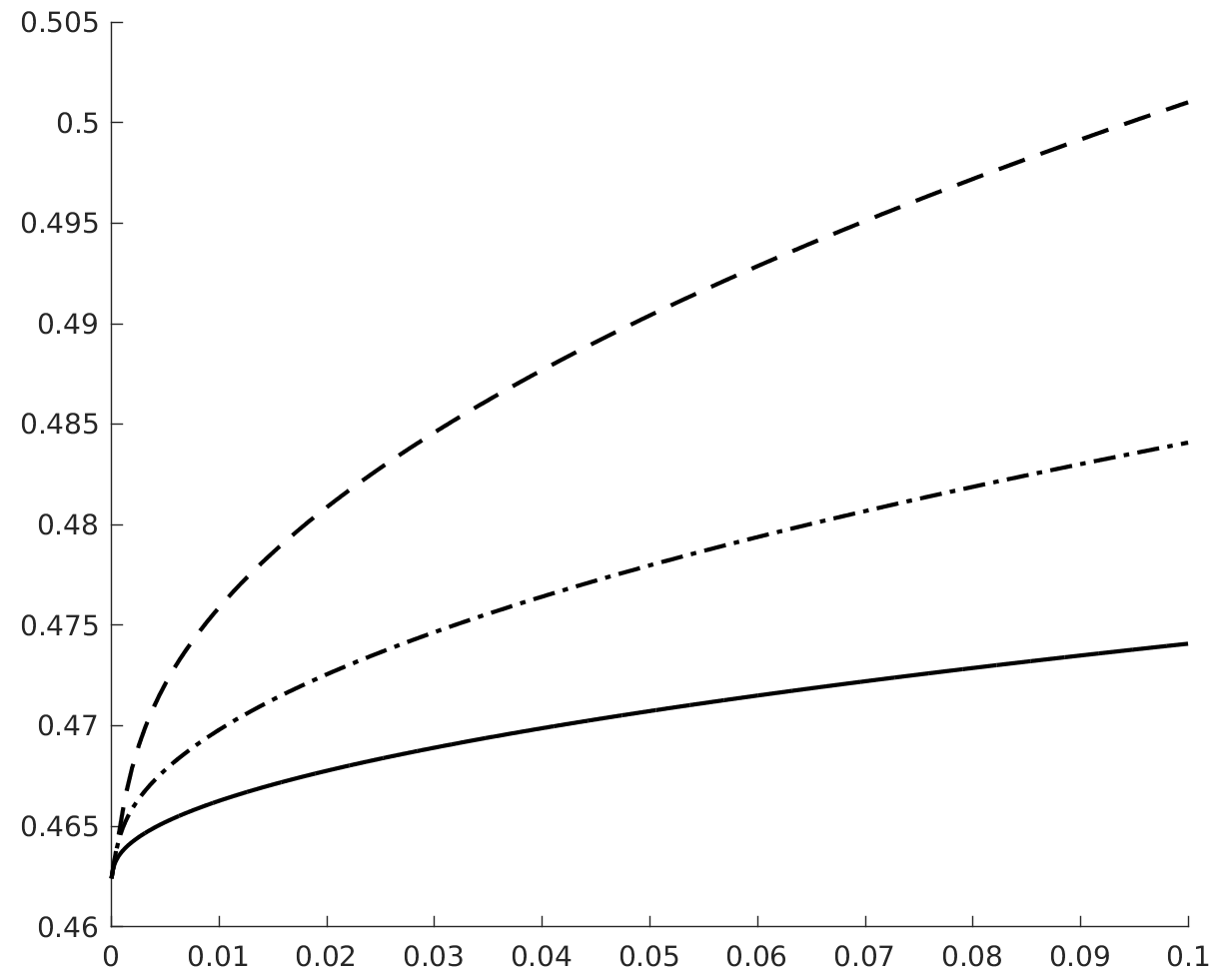}
    \caption{Total entropy over time for the moving interface problem.}
    \label{fig:1D_interface_total_entropy}
\end{figure}

\subsection{Two-species shock-tube problem}
\indent We simulate a shock-tube problem with two species. The initial conditions are given by:
\begin{align*}
\begin{cases}
    (\rho_1, \ \rho_2, \ u, \ p) =& (1, \ 0, \ 0, \ 1), \ 0 \leq x \leq 0.5, \\
    (\rho_1, \ \rho_2, \ u, \ p) =& (0, \ 0.125, \ 0, \ 0.1), \ 0.5 < x \leq 1.0,
\end{cases}
\end{align*}
with $\gamma_1 = 1.4, \ \gamma_2 = 1.6$ and $c_{v1} = c_{v2} = 1$. The exact solution to the multicomponent shock-tube problem is almost the same as the exact solution in the single-component case. The composition of the gas changes across the contact discontinuity but does not change across rarefaction waves and shock waves \cite{Fezoui}. \\
\indent Figures \ref{fig:1D_sod}(a)-(d) show the velocity, pressure, total density and specific heat ratio profiles at $t = 0.2s$. There is a good agreement with the exact solution, and we do not observe pressure and velocity oscillations around the moving contact which separates the two species. 

\begin{figure}[h!]
    \centering
    \subfigure[velocity]{\includegraphics[scale = 0.7]{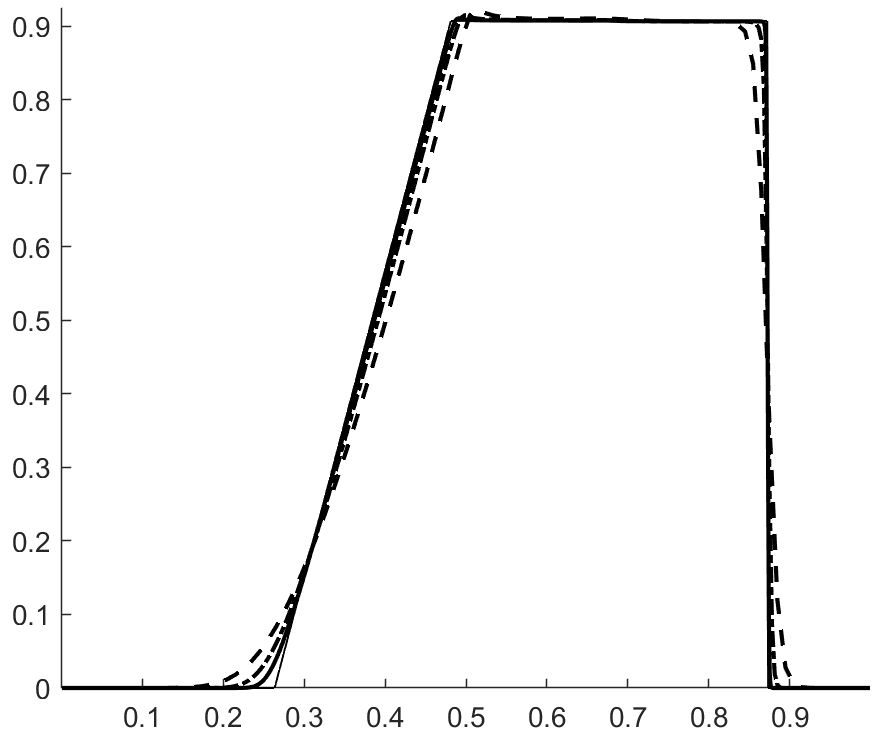}}
    \subfigure[pressure]{\includegraphics[scale = 0.7]{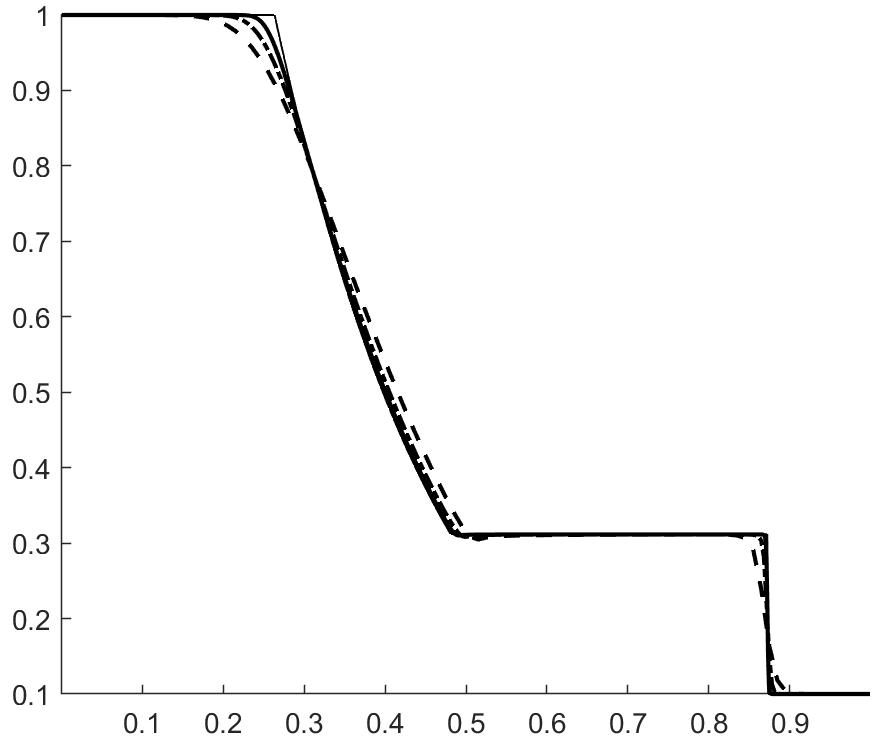}}
    \subfigure[density]{\includegraphics[scale = 0.7]{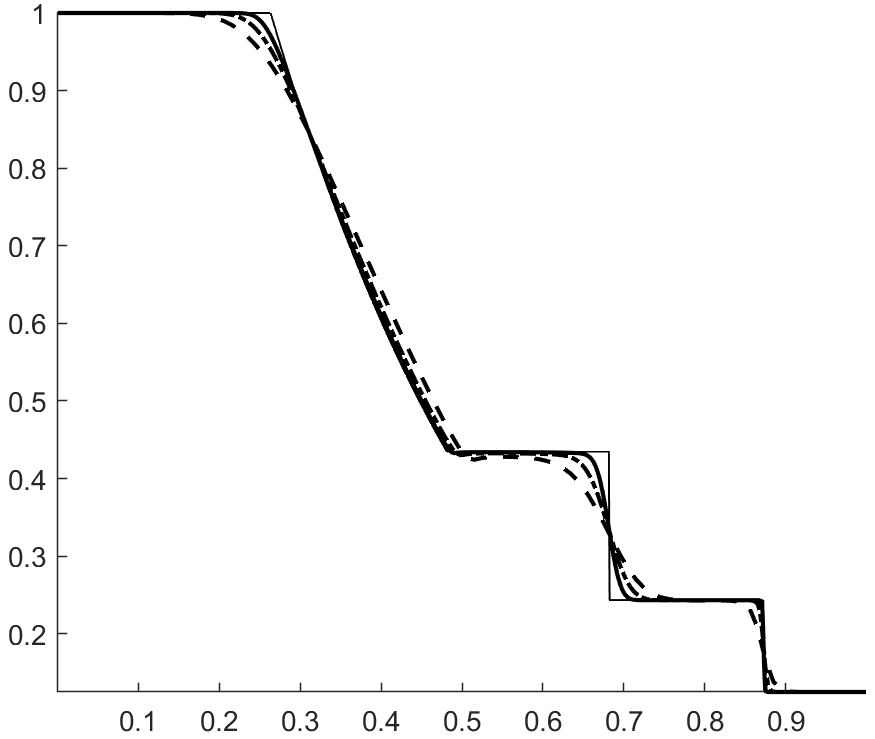}}
    \subfigure[specific heat ratio]{\includegraphics[scale = 0.7]{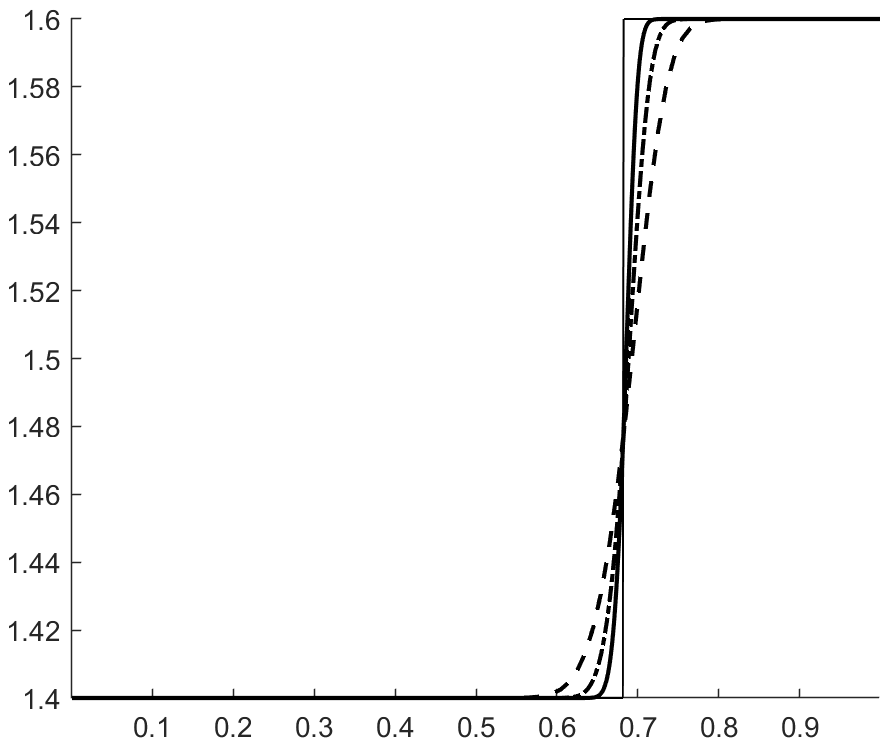}}
    \caption{Solution at t = 0.2 s for the two-species shock tube problem.}
    \label{fig:1D_sod}
\end{figure}

\subsection{Shock-interface interaction}
\indent We simulate a test problem from Quirk \& Karni \cite{Karni} which consists of a shock tube filled with air, where a shock wave moves to the right and eventually meets a stationary bubble of helium at pressure equilibrium. The initial conditions are given by:
\begin{align*}
\begin{cases}
    (\rho_1, \ \rho_2, \ u, \ p) =& \ (1.3765, \ 0, \ 0.3948, \ 1.57), \ 0 \leq x \leq 0.25, \ \mbox{Post-shock, air}, \\
    (\rho_1, \ \rho_2, \ u, \ p) =& \ (1., \ 0, \ 0., \ 1.), \ 0.25 \leq x \leq 0.4, \ \mbox{Pre-shock, air}, \\
    (\rho_1, \ \rho_2, \ u, \ p) =& \ (0., \ 0.139, \ 0., \ 1.), \ 0.4 \leq x \leq 0.6, \ \mbox{Pre-shock, helium bubble}, \\
    (\rho_1, \ \rho_2, \ u, \ p) =& \ (1., \ 0, \ 0., \ 1.), \ 0.6 \leq x \leq 1, \ \mbox{Post-shock, air}.
\end{cases}
\end{align*}
For air $c_{v1} = 0.72, \ \gamma_1 = 1.4 $. For helium $c_{v2} = 2.42, \ \gamma_2 = 1.67$. In \cite{Quirk}, this is problem is used to highlight the better behavior of Karni's non-conservative scheme \cite{Karni} over a conservative scheme using the Roe flux (see figure 2 in \cite{Quirk}). In a similar spirit, we compared our semi-discrete ES scheme with the Roe scheme. Figure \ref{fig:1D_shock_bubble} shows the pressure profile at $t = 0.35s$ obtained with each scheme. As expected, the solution with Roe's scheme is rife with oscillations unlike the solution with the present ES scheme which is free of these oscillations. The cause of this improvement is not entropy stability, but the property of preserving stationary contact discontinuities. Figure \ref{fig:1D_shock_bubble_early} shows the pressure profile before the right-moving shock a couple of instants before it meets the helium bubble. Roe's scheme does not preserve stationary contacts and therefore produces pressure anomalies which eventually pollute the solution at $t = 0.35$ (figure \ref{fig:1D_shock_bubble}). This problem is a good illustration of the importance of treating interfaces properly in the simulation of multicomponent compressible flows. The results also suggest that the current ES scheme will not produce oscillations on shock-interface interaction problems if the interface is stationary. This encouraged us to consider the next problem. 

\begin{figure}[h!]
    \centering
    \subfigure[ES flux]{\includegraphics[scale = 0.55]{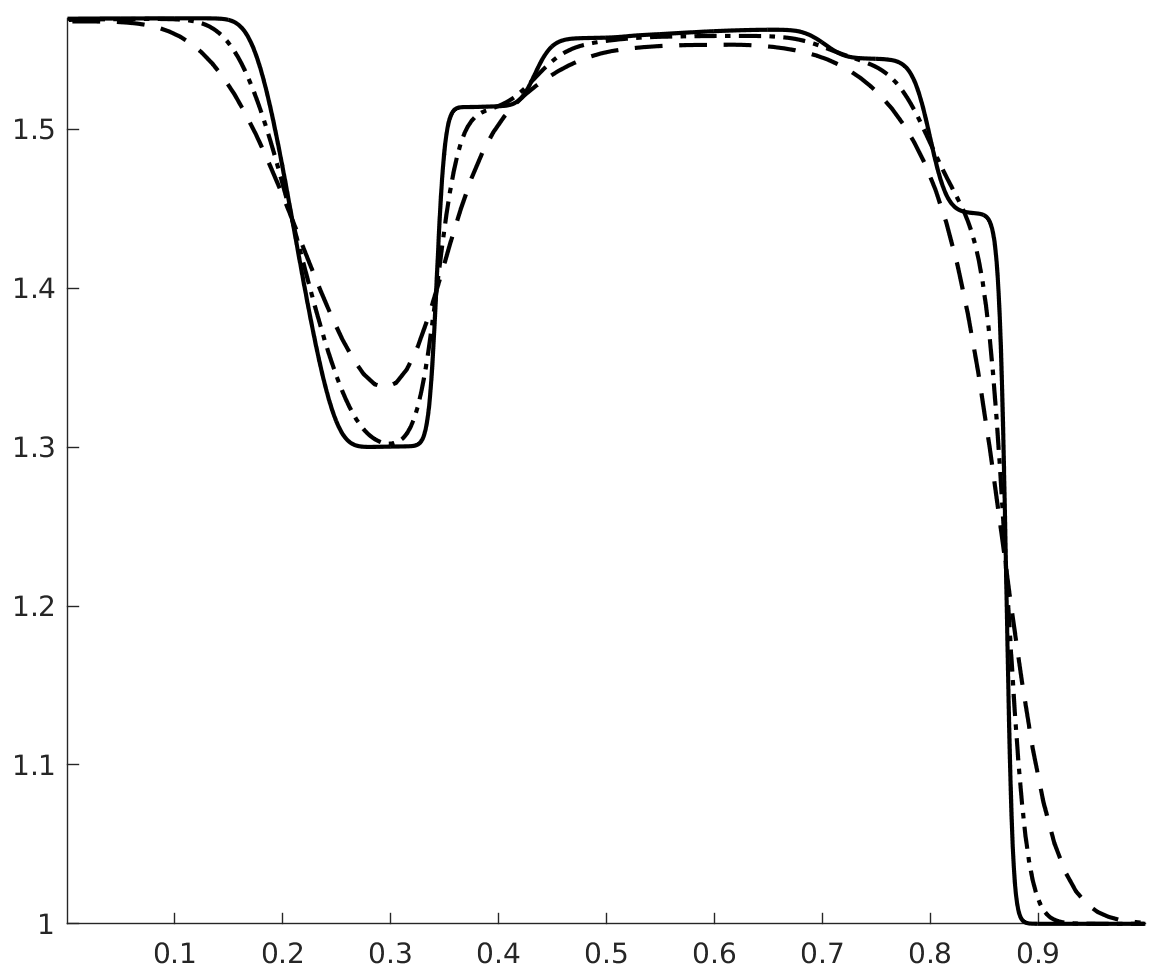}}
    \subfigure[Roe flux]{\includegraphics[scale = 0.55]{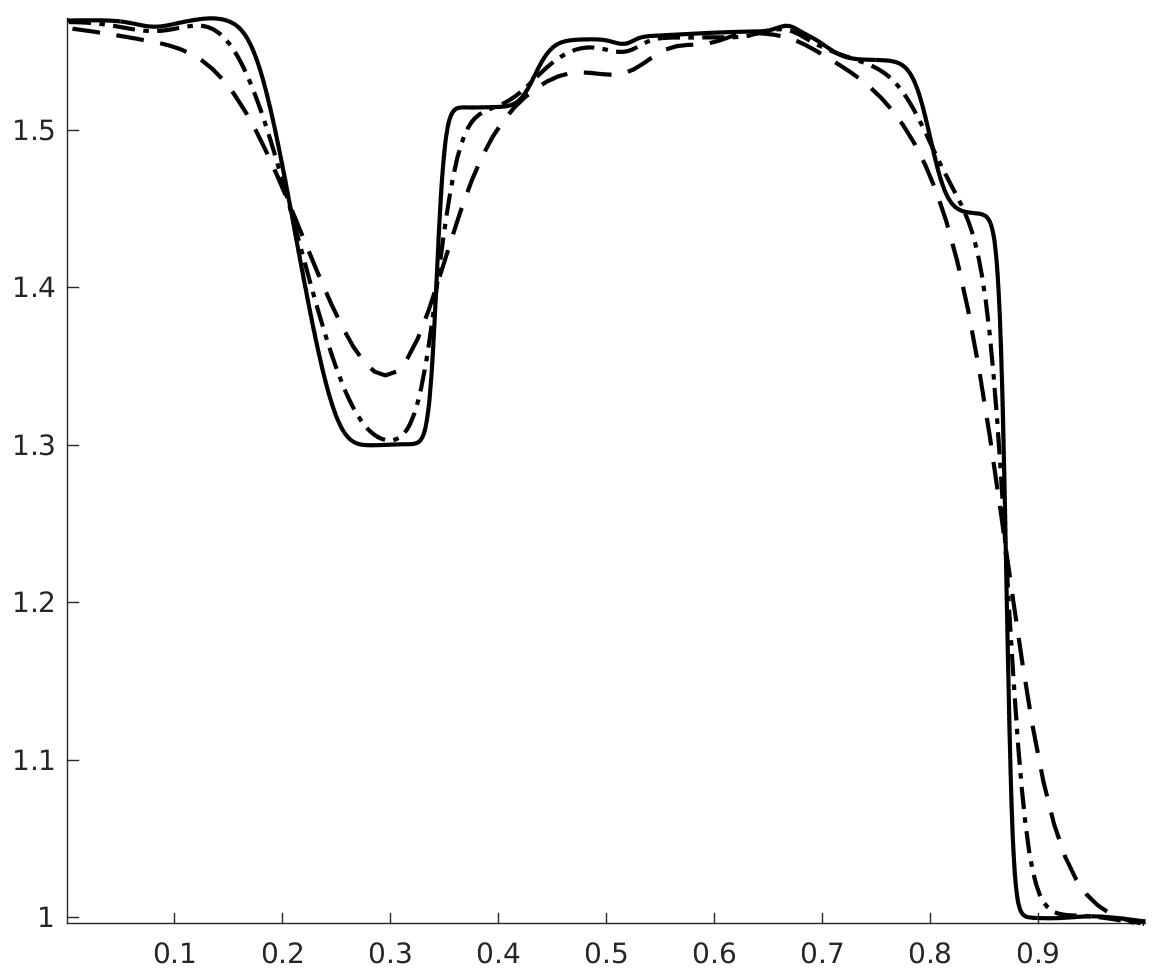}}
    \caption{Pressure profiles for the 1D shock-bubble interaction problem at t = 0.35 s.}
    \label{fig:1D_shock_bubble}
\end{figure}

\begin{figure}[h!]
    \centering
    \subfigure[ES flux]{\includegraphics[scale = 0.55]{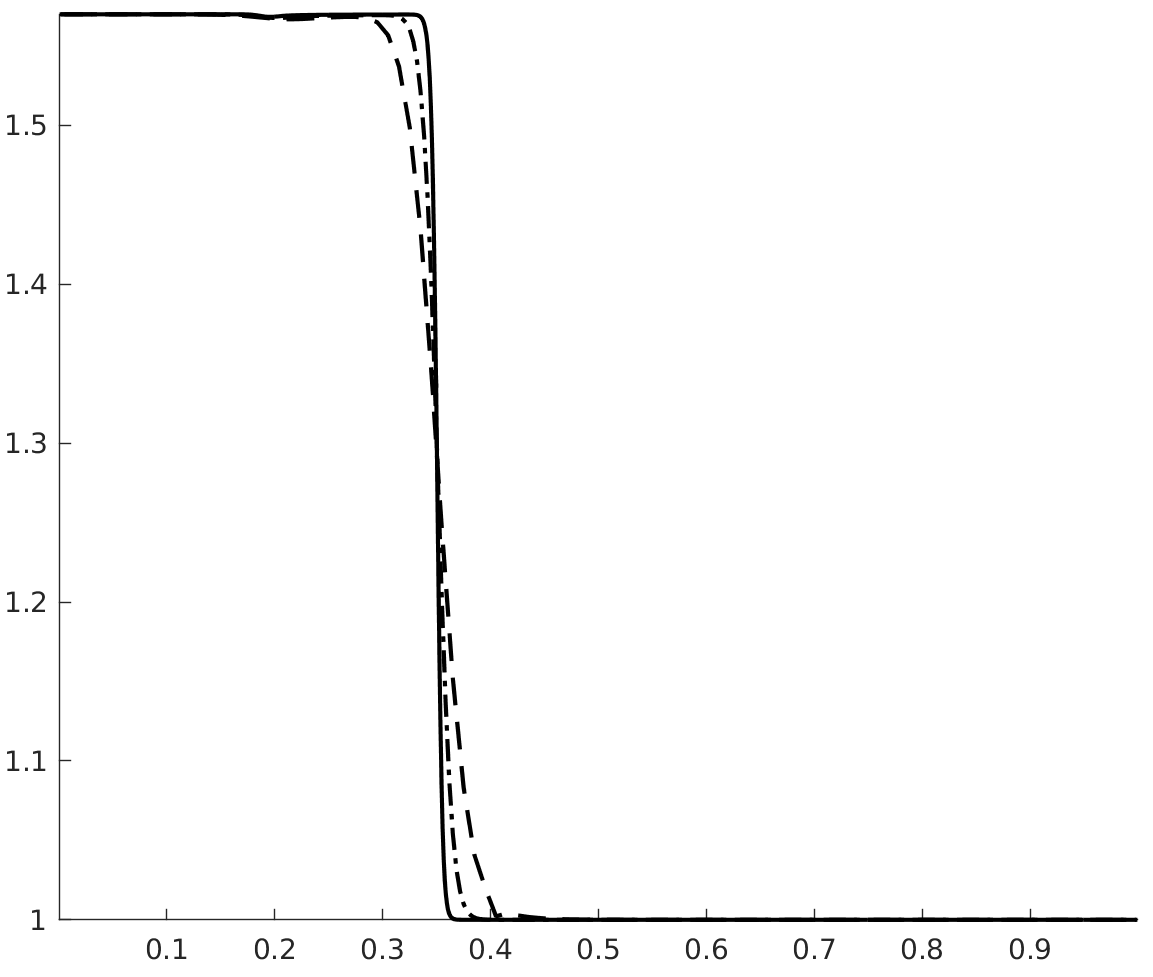}}
    \subfigure[Roe flux]{\includegraphics[scale = 0.55]{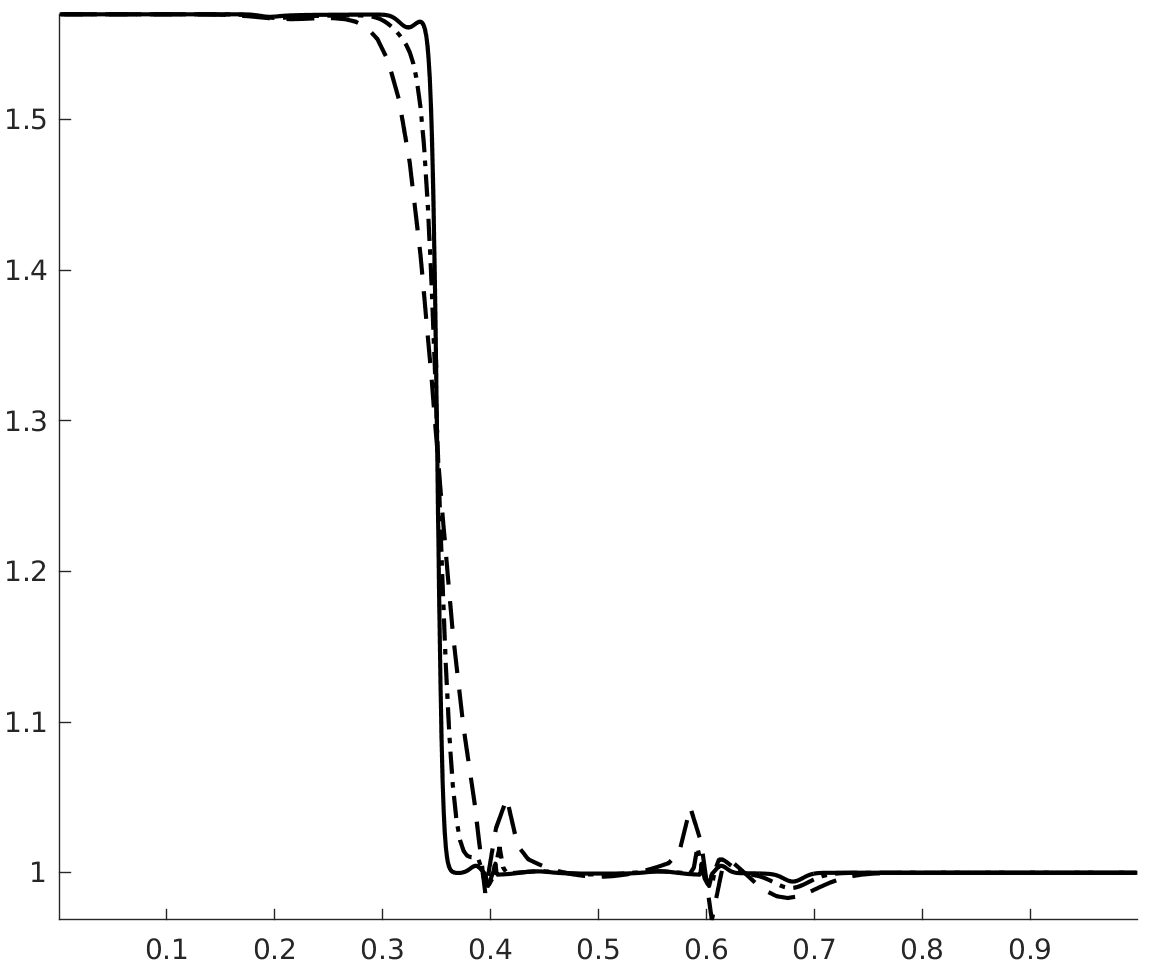}}
    \caption{Pressure profiles for the 1D shock-bubble interaction problem before the shock reaches the bubble, t = 0.069 s.}
    \label{fig:1D_shock_bubble_early}
\end{figure}

\subsection{Shock-bubble interaction}\label{sec:SBI}
\indent A test case that is commonly used in the development of numerical schemes for compressible multicomponent flows \cite{SBI_Kawai, SBI_Marquina, SBI_Johnsen, Quirk} is the interaction of a shock wave with a cylindrical gas inhomogeneity. This problem is a two-dimensional analog of the three-dimensional shock-induced mixing concept proposed by Marble \textit{et al.} \cite{Marble} in the context of supersonic scramjet design. This problem is also used in experimental and computational investigations of the Richtmyer-Meshkov instability \cite{Richtmyer, Meshkov}. \\
\indent Validating the present ES scheme against experimental data is beyond the scope of the present work. In this section, we are essentially interested in the ability of the scheme to simulate the physics relevant to this classic problem. For this purpose, we tried to reproduce the results of Marquina \& Mulet \cite{SBI_Marquina}. The computational domain (ABCD) is shown in figure \ref{fig:2D_bubble_IC}. A Mach $M_S = 1.22$ shock wave, positioned at $x = 275 \ mm$, moves to the left through quiescent air (species 1, $\gamma_1 = 1.4$ and $r_1 = 0.287 \ 10^{3}\ J.kg.^{-1}.K^{-1}$) and eventually meets a cylindrical bubble, centered at $(x, y) = [225, \ 0] \ mm$, filled with helium contaminated with 28\% of air ($\gamma_2 = 1.647, r_2 = 1.578 \ 10^3 \ J.kg^{-1}.K^{-1}$). The flow is assumed to be symmetric about the shock-tube axis (BC), therefore only the upper half of the physical domain is considered. Reflecting boundary conditions are applied on the top (AD) and bottom (BC) boundaries. The boundary conditions upstream (AB) and downstream (CD) the shock are not crucial in this problem \cite{Karni} so we simply extrapolate the flow, as in \cite{SBI_Marquina}. \\
\indent Since the current scheme is unable to guarantee positive partial densities and pressure, the initial conditions from \cite{SBI_Marquina} had to be modified. We set:
\begin{align*}
    \mbox{Region I:} \ \ (\rho_1, \ \rho_2, \ u, \ v, \ p) =& \ (\delta \rho, \ 1.225 (r_1/r_2) - \delta \rho, \ 0., \ 0., \ 101325),  \\
    \mbox{Region II:} \ \ (\rho_1, \ \rho_2, \ u, \ v, \ p) =& \ (1.225 - \delta \rho, \ \delta \rho, \ 0., \ 0., \ 101325), \\ 
    \mbox{Region III:} \ \ (\rho_1, \ \rho_2, \ u, \ v, \ p) =& \ (1.6861 - \delta \rho, \ \delta \rho, \ -113.5243, \ 0., \ 159060 ).
\end{align*}
with $\delta \rho = 0.03$ (units for density, velocity and pressure are $kg/m^3$, $m/s$ and Pa, respectively). This setup differs from the original in two aspects. First, the composition of the gas in regions I, II and III will not be the same. Second, regions I and II are in pressure and temperature equilibrium in the original setup whereas in ours, temperature equilibrium is lost. These differences make quantitative comparisons, notably with the experimental data of Haas \& Sturtevant \cite{SBI_Haas}, difficult to carry out. However, we expect the physics to remain similar qualitatively (see Picone \& Boris \cite{Picone} who studied this problem using a single gas flow model). \\
\indent For this simulation, we used a 4-th order TecNO scheme in space with a 4-th order explicit Runge-Kutta scheme in time. We used a $4000 \times 400$ grid and set the CFL number to $0.3$. Figure \ref{fig:2D_bubble_Helium} shows pseudo-schlieren images of the density gradients at different times after the shock reached the bubble. These are in good qualitative agreement with those produced by Marquina \& Mulet, figure 7 in \cite{SBI_Marquina} (see also Quirk \& Karni \cite{Quirk}, figure 9). We refer to these two references for a detailed discussion of the physical mechanisms at work. \\
\indent In figure \ref{fig:2D_bubble_XT}-(a), we show an x–t diagram of the position of the key features of the shock-bubble interaction. These features are explained in figure \ref{fig:2D_bubble_XT}-(b). The positions of these features are obtained by looking at inflection points of horizontal sections of the shading function $\phi$ used in figure \ref{fig:2D_bubble_Helium}. The upstream bubble interface is tracked on a section at a height $20$ mm from the axis. The incident shock is tracked on a section at $5$ mm from the top wall. The remaining features are tracked on a section along the symmetry line. The x-t diagram from Marquina \& Mulet, figure 5 in \cite{SBI_Marquina}, shows similar trends. The mean velocities of these features are calculated from their visually straight trajectories using linear regression, and displayed in Table \ref{table:V}.

\begin{figure}
    \centering
    \includegraphics[scale=0.85]{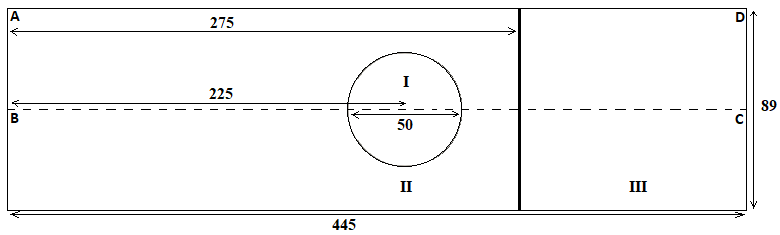}
    \caption{Computational domain (not to scale) for the 2D Shock-bubble interaction problem. Only the top half of the domain (ABCD) is simulated. Lengths in millimeters. Region I: Bubble. Region II: Pre-shock. Region III: Post-shock.}
    \label{fig:2D_bubble_IC}
\end{figure}

\begin{figure}[htbp!]
    \centering
    \subfigure[$t = 23.32 \mu s$]{\includegraphics[scale = 0.6]{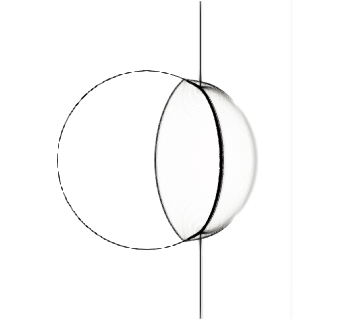}}
    \subfigure[$t = 42.98 \mu s$]{\includegraphics[scale = 0.6]{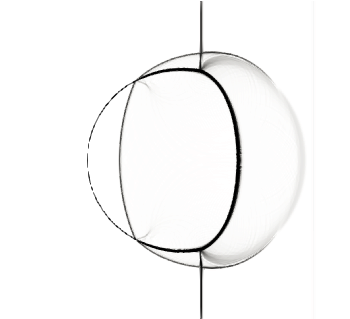}}
    \subfigure[$t = 52.81 \mu s$]{\includegraphics[scale = 0.6]{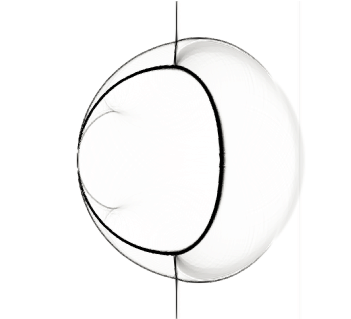}}
    \subfigure[$t = 67.55 \mu s$]{\includegraphics[scale = 0.6]{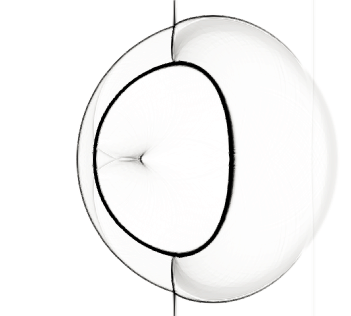}}
    \subfigure[$t = 77.38 \mu s$]{\includegraphics[scale = 0.6]{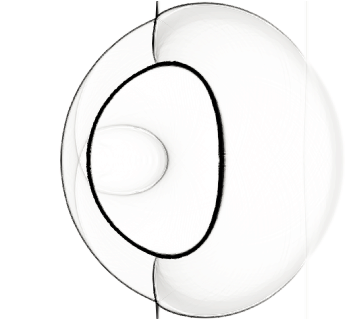}}
    \subfigure[$t = 101.95 \mu s$]{\includegraphics[scale = 0.6]{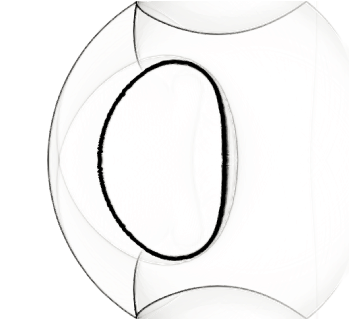}}
    \subfigure[$t = 259.21 \mu s$]{\includegraphics[scale = 0.6]{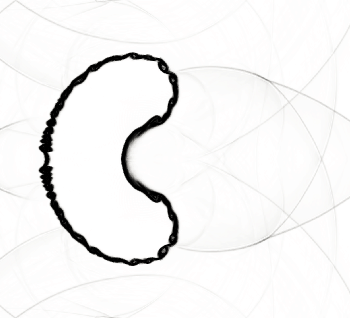}}
    \subfigure[$t = 445.95 \mu s$]{\includegraphics[scale = 0.6]{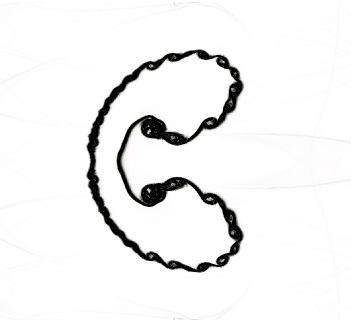}}
    \subfigure[$t = 676.91 \mu s$]{\includegraphics[scale = 0.6]{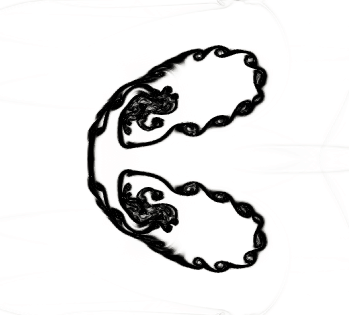}}
    \caption{Pseudo-Schlieren pictures for the shock-bubble interaction problem. $\phi = \exp( - \psi |\nabla \rho| / |\nabla \rho|_{\max}), \ \psi = 10 Y_1 + 150 Y_2$.}
    \label{fig:2D_bubble_Helium}
\end{figure}

\begin{figure}[htbp!]
    \centering
    \subfigure[]{\includegraphics[scale = 0.67]{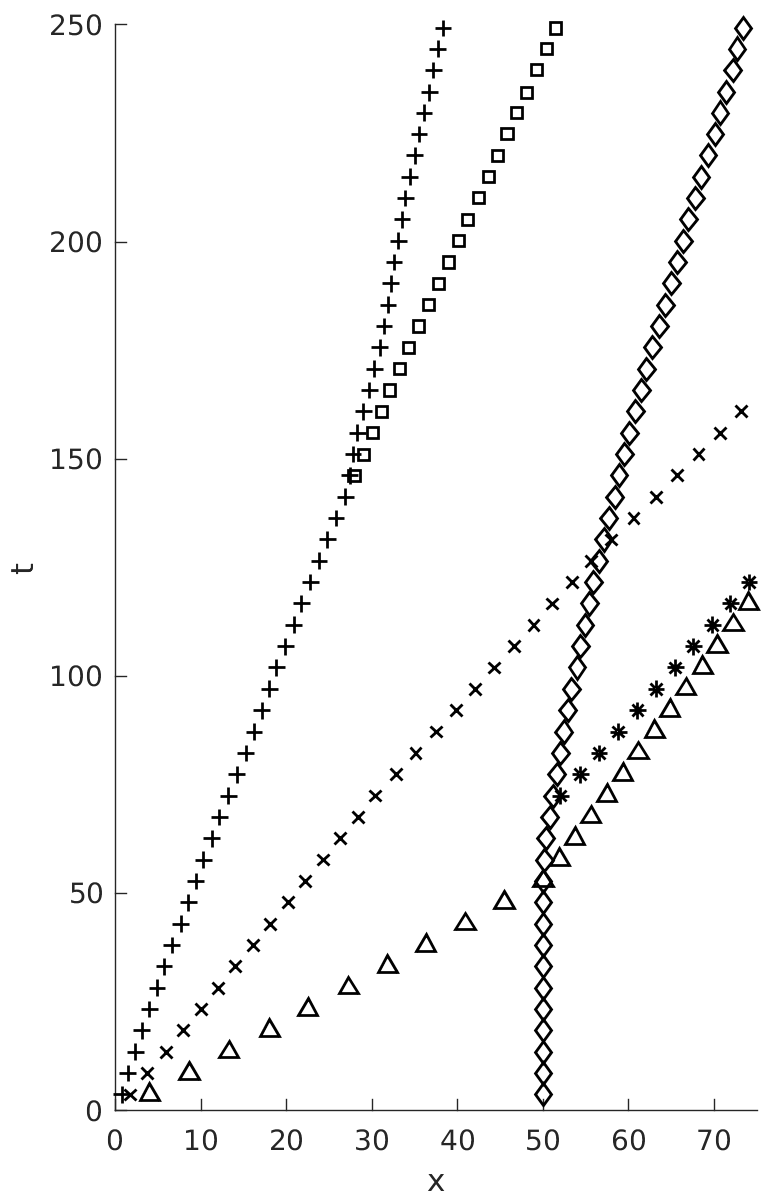}}
    \subfigure[]{\includegraphics[scale = 0.37]{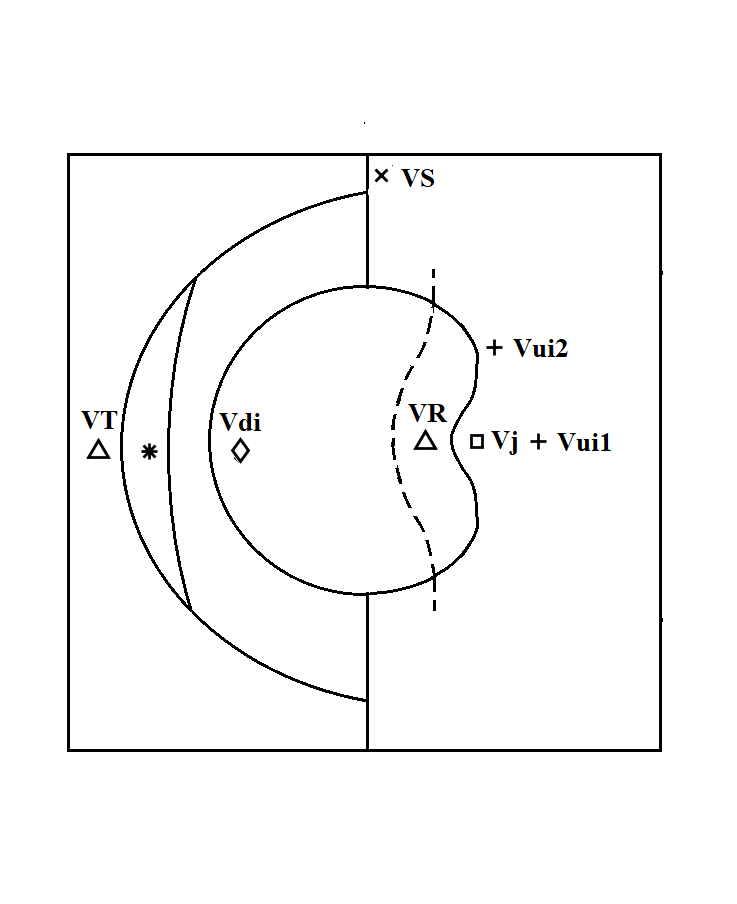}}
    \caption{(a) x–t diagram of the key features explained in (b); (b) VS: incident shock, VR: refracted shock VT: transmitted shock, Vui: upstream border of the bubble, Vdi: downstream border of the bubble, Vj: air jet head}
    \label{fig:2D_bubble_XT}
\end{figure}

\begin{table}
\centering
\begin{tabular}{ c c c c c c c c } 
 \hline
  & VS & VR & VT & Vui1 & Vui2 & Vdi & Vj \\ 
 \hline
 Gouasmi \textit{et al.} & 422 & 954 & 377 & 185 & 105 & 138 & 228  \\ 
 Marquina and Mulet \cite{SBI_Marquina} & 414 & 943 & 373 & 176 & 111 & 153 & 229  \\ 
 \hline
\end{tabular}
\caption{Velocities in $m/s$ of the features explained in figure \ref{fig:2D_bubble_XT}. The time intervals in $\mu s$ for computing each velocity are: VS [3.66, 62.64], VR [3.66, 52.81], VT [52.81, 141.26], Vui1 [3.66, 141.26], Vui2 [146.18, 254.29], Vdi [141.26, 254.29], Vj [146.18, 254.29].}
\label{table:V}
\end{table}

\section{Conclusions}

\indent We formulated entropy-stable schemes for the multicomponent compressible Euler equations. This effort built on the theoretical ground laid out by Chalot \textit{et al.} \cite{Chalot} and Giovangigli \cite{Giovangigli} and followed a procedure pioneered by Tadmor \cite{Tadmor}: we first derived a baseline EC flux and complemented it with an entropy-producing upwind dissipation operator \cite{Roe1}. We showed that in the limit $\rho_k \rightarrow 0$, the EC flux is well-defined. This also holds for the upwind dissipation operator and the TecNO reconstruction provided that the averaged partial densities are well-chosen. Unfortunately, this does not prevent the scheme from producing negative values of partial densities and pressure. We also derived a condition on the averaged state of the dissipation matrix so that the ES scheme can exactly preserve stationary contact discontinuities. \\
\indent It is a well-known issue that conservative schemes are subject to pressure oscillations in moving interface configurations. Numerical experiments showed that the ES scheme we constructed is no exception. We stress that these anomalies, which are not present in the single component case, violate neither entropy stability nor a minimum principle of the specific entropy \cite{Gouasmi4}. The remedies to the pressure oscillations problem typically consist in giving up on conservation of total energy \cite{Abgrall1, Karni, Billet, Abgrall}, which can impair the ability of the scheme to properly capture shocks. A compromise between ensuring entropy stability and the proper treatment of moving interfaces could perhaps be achieved with the EC/ES schemes for non-conservative hyperbolic systems developed by Castro \textit{et al.} \cite{Castro}, even though non-conservative schemes have their own lot of issues \cite{Hou, Abgrall_NC}.  \\
\indent We remain careful to not make any peremptory statement about ES schemes. This work highlighted issues associated with an ES formulation that is more of a default choice than a well-grounded one. The choices of EC flux and dissipation operator are not unique and have to be explored in more depth. Ultimately, we hope that this effort will encourage further work on the proper way to manage entropy locally in a discrete field. 

\section*{Acknowledgments}
\indent Ayoub Gouasmi and Karthik Duraisamy were funded by the AFOSR through grant number FA9550-16-1-030 (Tech. monitor: Fariba Fahroo). Ayoub Gouasmi would like to thank Laslo Diosady and Philip Roe for helpful conversations, and Eitan Tadmor for discussions on the minimum entropy principle \cite{Tadmor2}. \\
\indent Resources supporting this work were provided by the NASA High-End Computing (HEC) Program through the NASA Advanced Supercomputing (NAS) Division at Ames Research Center. 

\section*{References}

\appendix
\section{Additional results - Moving interface problem}
\subsection{Single component case}\label{appendix:interface_Euler}
Here we present numerical results for a moving contact discontinuity in the compressible Euler equations. The initial conditions are given by:
\begin{align*}
\begin{cases}
    (\rho, \ u, \ p) =& (0.1, \ 1., \ 1.), \ -1.0 \leq x \leq 0.0, \\
    (\rho, \ u, \ p) =& ( 1., \ 1., \ 1.), \ 0.0 < x \leq 1.0,
\end{cases}
\end{align*}
with $\gamma = 1.4$ and $c_{v} = 1$. The anomalies observed in the multicomponent case are not present. The velocity and pressure remain constant at all times.
\begin{figure}[h!]
    \centering
    \subfigure[Velocity]{\includegraphics[scale = 0.75]{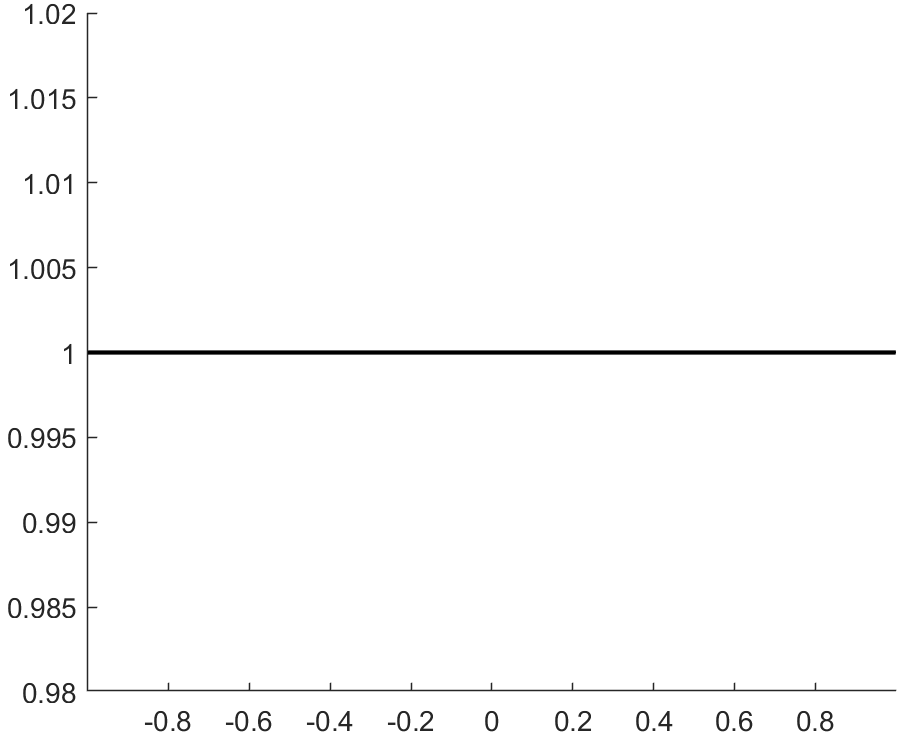}}
    \subfigure[Specific Entropy $s$]{\includegraphics[scale = 0.75]{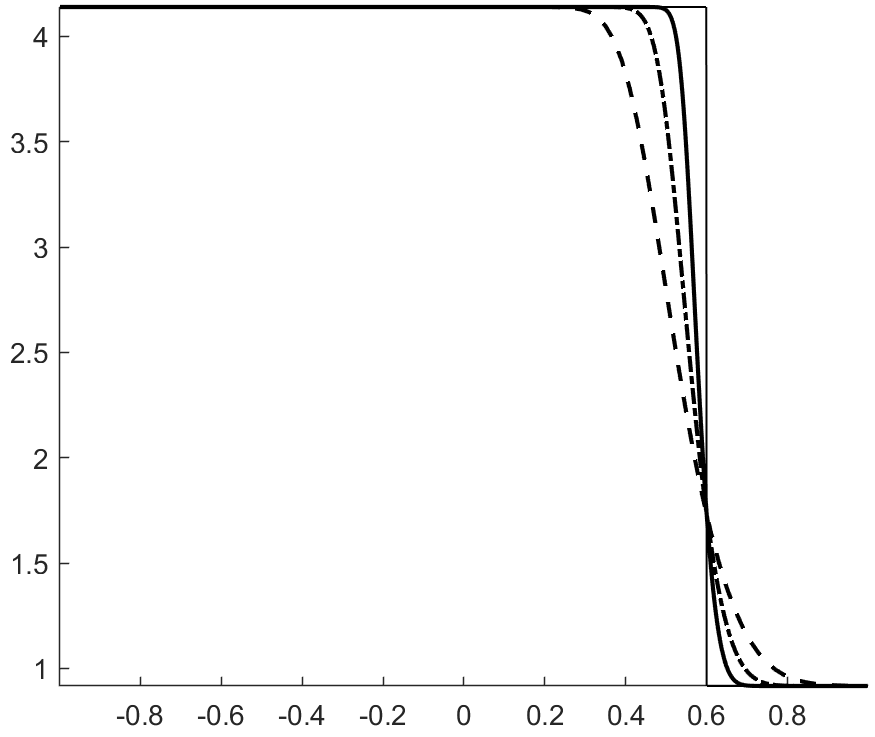}}
    \subfigure[Pressure]{\includegraphics[scale = 0.75]{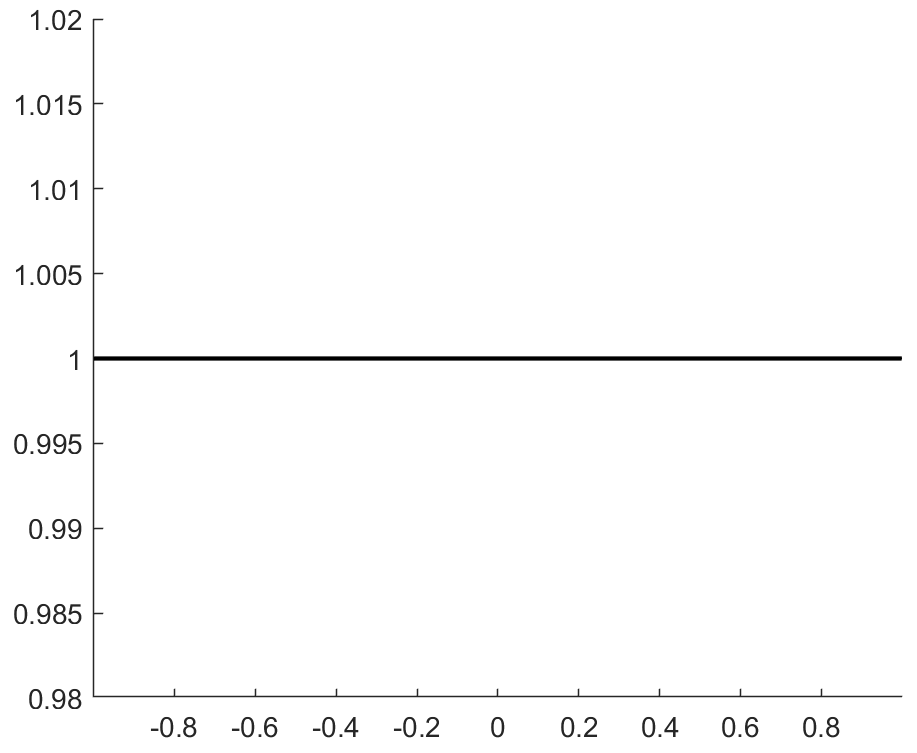}}
    \subfigure[Entropy $\rho s$]{\includegraphics[scale = 0.75]{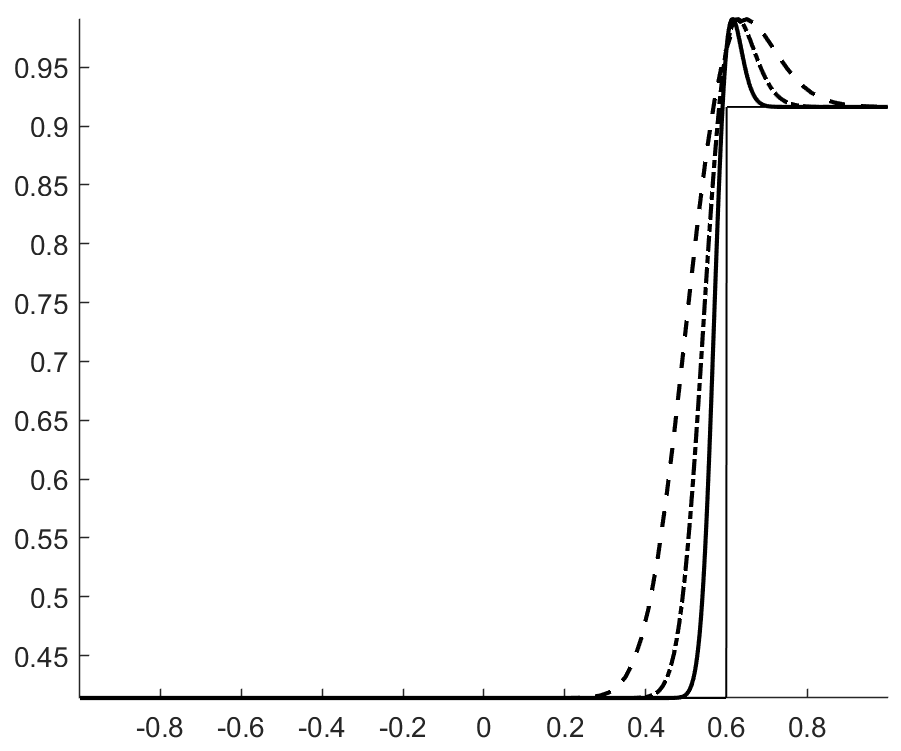}}
    \caption{Solution for the single-component moving contact discontinuity at $t = 0.6$ s. Same legend as in figure 1(a). }
    \label{fig:1D_Euler_moving_interface_velocity}
\end{figure}

\subsection{Conserving entropy in space, producing entropy in time}\label{appendix:interface_BE}
The implicit scheme did not converge in the original setup (most likely due to the generation of negative densities/pressure). We therefore considered a different setup, with non-zero partial densities, for which the pressure oscillations problem is still present. 
\begin{align*}
\begin{cases}
    (\rho_1, \ \rho_2, \ u, \ p) =& (0.3, \ 0.15, \ 1., \ 1.), \ 0 \leq x \leq 0.5, \\
    (\rho_1, \ \rho_2, \ u, \ p) =& (0.15, \ 1., \ 1., \ 1.), \ 0.5 < x \leq 1.0,
\end{cases}
\end{align*}
with $\gamma_1 = 1.4, \ \gamma_2 = 1.6$ and $c_{v1} = c_{v2} = 1$. A grid of 200 cells is used. Figure \ref{fig:1D_BE_moving_interface_pressure} shows the pressure profiles obtained with an EC flux and an ES flux in space, respectively, for two different CFL numbers. In the first case, the entropy production of the scheme only comes from the stabilization of the Backward Euler time scheme, which grows with $\Delta t$. The high frequency oscillations observed are typically observed when too little dissipation is added to EC schemes \cite{Gouasmi1, Zhong}. In the event that the scheme is EC at the fully discrete level (see \cite{Gouasmi1} for an example), these oscillations will be present but will not increase in magnitude over time. This is not desirable.
\begin{figure}[h!]
    \centering
    \subfigure[CFL $=2$]{\includegraphics[scale = 0.75]{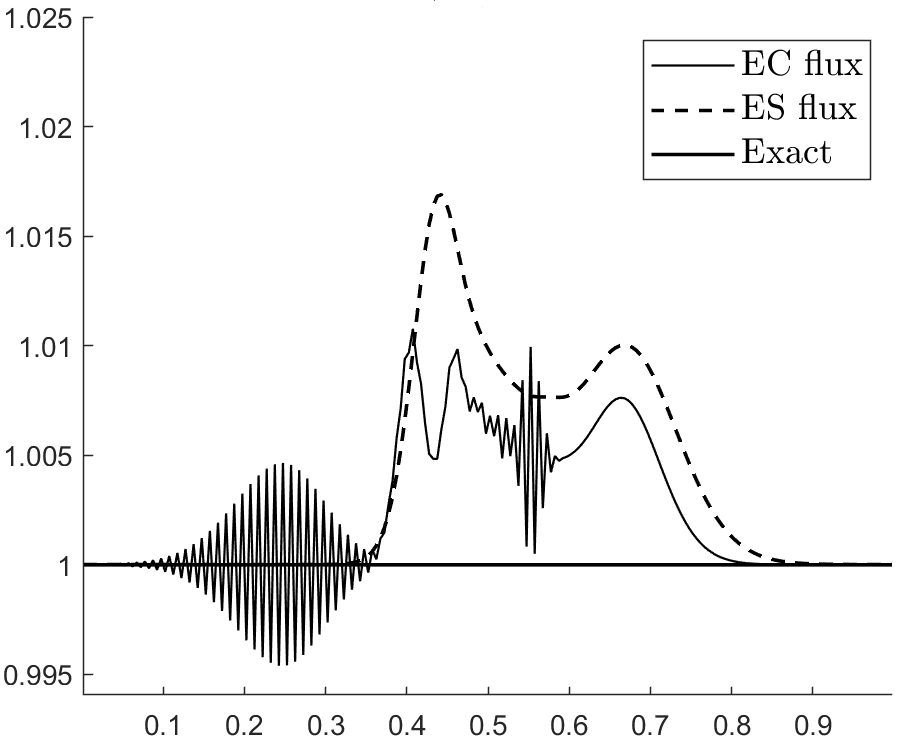}}
    \subfigure[CFL $=8$]{\includegraphics[scale = 0.75]{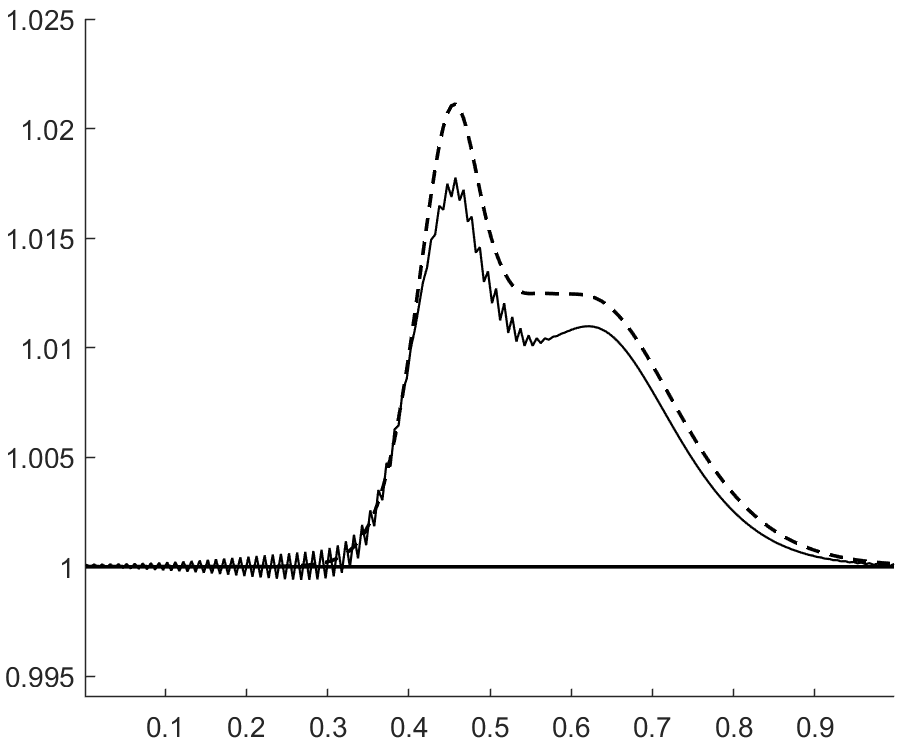}}
    \caption{pressure profiles at t = 0.1 s for a moving interface with BE in time and either an EC flux or an ES flux in space}
    \label{fig:1D_BE_moving_interface_pressure}
\end{figure}

The main conclusion we draw is that the pressure anomalies remain present when the upwind dissipation typically used in space is replaced by the dissipation of Backward Euler. The magnitude of the pressure anomalies is higher when both the interface flux and time scheme are ES.

\section{Three-dimensional version}\label{appendix:3D}
The three-dimensional multicomponent Euler equations are given by:
\begin{equation}\label{eq:3DPDE}
\frac{\partial \mathbf{u}}{\partial t}  + \frac{\partial \mathbf{f_1}}{\partial x_1} + \frac{\partial \mathbf{f_2}}{\partial x_2} + \frac{\partial \mathbf{f_3}}{\partial x_3}  = 0.
\end{equation}
The state vector $\mathbf{u}$ and flux vectors $\mathbf{f_1}, \ \mathbf{f_2}$ and $\mathbf{f_3}$ are defined by:
\begin{align*}
    \mathbf{u} =& \ \begin{bmatrix} \rho_1 & \hdots & \rho_N & \rho u & \rho v & \rho w & \rho e^t \end{bmatrix}^T, \\
    \mathbf{f_1} =& \ \begin{bmatrix} \rho_1 u & \hdots & \rho_N u & \rho u^2 + p & \rho u v & \rho u w &(\rho e^t + p)u \end{bmatrix}^T, \\
    \mathbf{f_2} =& \ \begin{bmatrix} \rho_1 v & \hdots & \rho_N v & \rho u v & \rho v^2 + p & \rho v w & (\rho e^t + p)v \end{bmatrix}^T, \\
    \mathbf{f_3} =& \ \begin{bmatrix} \rho_1 w & \hdots & \rho_N v & \rho u w & \rho v w & \rho w^2 + p & (\rho e^t + p)w \end{bmatrix}^T.
\end{align*}
The conservation equation for entropy writes:
\begin{equation}\label{eq:PDE_Entropy3D}
    \frac{\partial U}{\partial t} +  \frac{\partial F_1}{\partial x_1} + \frac{\partial F_2}{\partial x_2} + \frac{\partial F_3}{\partial x_3}  = 0, \ U = -\rho s, \ F_1 = -u \rho s, \ F_2 = -v \rho s, F_3 = -w \rho s.
\end{equation}
The vector of entropy variables is:
\begin{equation}\label{eq:Entropy_var3D}
    \mathbf{v} = \frac{1}{T} 
    \begin{bmatrix} g_1 - \frac{1}{2} (u^2 + v^2 + w^2) & \hdots & g_N - \frac{1}{2} (u^2 + v^2 + w^2) & u & v & w & -1 \end{bmatrix}.
\end{equation}
The potential flux in space $\mathcal{U}$ is unchanged. There are now three spatial potential functions to work with. They are given by:
\begin{equation}\label{eq:pot3D}
    \mathcal{F}_1 = \mathbf{v} \cdot \mathbf{f_1} - F_1 = \sum_{k=1}^N\frac{R}{m_k} \rho_k u, \ \mathcal{F}_2 = \mathbf{v} \cdot \mathbf{f_2} - F_2 = \sum_{k=1}^N\frac{R}{m_k} \rho_k v, \ \mathcal{F}_3 = \mathbf{v} \cdot \mathbf{f_3} - F_3 = \sum_{k=1}^N\frac{R}{m_k} \rho_k w.
\end{equation}
It can be easily shown that an entropy-conservative flux across an interface of normal $\mathbf{n} = (n_1, n_2, n_3)$, denoted $\mathbf{f^*} $ must satisfy:
\begin{equation*}
    [\mathbf{v}] \cdot \mathbf{f^*} = [n_1 \mathcal{F}_1 + n_2 \mathcal{F}_2 + n_3 \mathcal{F}_3].
\end{equation*}
Using the same method as in the 1D case, one obtains $\mathbf{f^*} = [f_{1,1} \ \dots \ f_{1,N} \ f_2 \ f_3 \ f_4 \ f_5]$ with:
\begin{align}\label{eq:EC_flux3D}
    f_{1,k} =& \  \rho_k^{ln}\overline{u_n}, \nonumber \\
    f_2     =& \ \frac{n_1}{\overline{1/T}} \bigg(\sum_{k=1}^Nr_k \overline{\rho_k}\bigg) + \overline{u} \sum_{k=1}^Nf_{1,k}, \nonumber \\
    f_3     =& \ \frac{n_2}{\overline{1/T}} \bigg(\sum_{k=1}^Nr_k \overline{\rho_k}\bigg) + \overline{v} \sum_{k=1}^Nf_{1,k}, \\
    f_4     =& \ \frac{n_3}{\overline{1/T}} \bigg(\sum_{k=1}^Nr_k \overline{\rho_k}\bigg) + \overline{w} \sum_{k=1}^Nf_{1,k}, \nonumber \\
    f_5     =& \ \sum_{k=1}^N(e_{0k} + c_{vk}\frac{1}{(1/T)^{ln}} - \frac{1}{2}\overline{u^2+v^2+w^2}) f_{1,k} + \overline{u} f_2 + \overline{v} f_3 + \overline{w} f_4. \nonumber
\end{align}
$u_n = n_1 u + n_2 v + n_3 w$ is the velocity normal to the interface. The temporal Jacobian \cite{Giovangigli} is given by:
\begin{gather*}
    H  = 
    \begin{bmatrix}
        \frac{\rho_1}{r_1} &        &             0              &  \frac{\rho_1}{r_1}u &  \frac{\rho_1}{r_1}v &  \frac{\rho_1}{r_1}w & \frac{\rho_1}{r_1}(e_1^t) \\
                             & \ddots &                            &    \vdots &    \vdots  & \vdots &             \vdots                        \\
                            &        &    \frac{\rho_N}{r_N}    &  \frac{\rho_N}{r_N}u &  \frac{\rho_N}{r_N}v & \frac{\rho_N}{r_N}w & \frac{\rho_N}{r_N}(e_N^t) \\
       &  &  &  \rho T + u^2 S_1 &  u v S_1 & u w S_1 & u(\rho T + S_2) \\
       & &  & &  \rho T + v^2 S_1 &  v w S_1 & v(\rho T + S_2) \\
         & & & & &  \rho T + w^2 S_1 &  w(\rho T + S_2) \\
       sym  &  &  &  &  & 
         & \rho T (2k + c_v T) + S_3.
    \end{bmatrix}, \\
    S_1 = \sum_k\frac{ \rho_k}{r_k}, \ S_2 =  \sum_k\frac{\rho_k}{r_k}(e_k^t), \ S_3 = \sum_k\frac{\rho_k}{r_k}(e_k^t)^2 
\end{gather*}
Next is the scaling matrix. Let $A_n$ be the flux Jacobian in the normal direction:
\begin{equation*}
    A_n = n_1 \frac{\partial \mathbf{f_1}}{\partial \mathbf{u}} + n_2 \frac{\partial \mathbf{f_2}}{\partial \mathbf{u}} + n_3 \frac{\partial \mathbf{f_3}}{\partial \mathbf{u}}.
\end{equation*}
A general expression for the eigenvector matrix $R$ such that $A_n = R \Lambda R^{-1}$ is the following:
\begin{gather*}
    R = \begin{bmatrix}
                    1      &        &          &    0   &    0   & Y_1  & Y_1  \\
                          & \ddots &          & \vdots & \vdots & \vdots  & \vdots  \\
                          &        &    1    &    0   &    0   & Y_N  & Y_N  \\
                    u     & \hdots &    u     &        &       & u + a n_1   & u - a n_1  \\
                    v     & \hdots &    v     &   \mathbf{r_I}  &   \mathbf{r_{II}}   & v + a n_2   & v - a n_2  \\
                    w     & \hdots &    w     &      &     & w + a n_3   & w - a n_3  \\
                 k - \frac{d_1}{\gamma-1}  & \hdots & k - \frac{d_N}{\gamma-1} &       &      & h^t + u_n a & h^t - u_n a                     
            \end{bmatrix}, \\ \Lambda = diag(\begin{bmatrix}
                u_n & \hdots & u_n & u_n & u_n+a & u_n-a
            \end{bmatrix}),
\end{gather*}
where $\mathbf{r_{I}}, \mathbf{r_{II}} \in \mathbb{R}^{4 \times 1}$ are such that:
\begin{equation*}
    \mathbf{r_I}, \mathbf{r_{II}} \in span\bigg\{ \mathbf{e_1} = \begin{bmatrix} 0 \\ -an_3 \\ an_2 \\ -a(n_3 v - n_2 w)\end{bmatrix}, \ \mathbf{e_2} =  \begin{bmatrix} -an_3 \\ 0 \\ an_1 \\ -a(n_3 u - n_1 w) \end{bmatrix}, \ \mathbf{e_3} = \begin{bmatrix} -an_2 \\ an_1 \\ 0 \\ -a(n_2 u - n_1 v) \end{bmatrix} \bigg\}, 
\end{equation*}
and $rank(\mathbf{r_I}, \ \mathbf{r_{II}}) = 2 $. If $n_1 \neq 0$, take $(\mathbf{r_{I}}, \mathbf{r_{II}}) = (\mathbf{e_3}, \mathbf{e_2})$. The squared scaling matrix is given by:
\begin{equation}\label{eq:3D_T2}
    T^2 = R^{-1} H R^{-T} = \frac{\rho}{\gamma  r}diag(\begin{bmatrix}
                T^{2Y} & T^{2N} & 1/2 & 1/2
            \end{bmatrix}),
\end{equation}
where $T^{Y} = \mathcal{T}^{Y} (\mathcal{T}^{Y})^T \in \mathbb{R}^{N \times N}$ is the same as in the 1D setting, and $T^{2N} \in \mathbb{R}^2$ is given by:
\begin{equation*}
    T^{2N} = \frac{1}{n_1^2}
            \begin{bmatrix} 
                n_1^2+n_3^2 & -n_2 n_3 \\
                -n_2 n_3 & n_1^2+n_2^2
            \end{bmatrix}.
\end{equation*}
If $n_2^2+n_3^2 = 0$, then $T^{2N} = I_{2 \times 2}$, otherwise $T^{2N} = T^{N} (T^{N})^T$ with:
\begin{equation*}
    T^{N} = \frac{1}{n_1\sqrt{n_2^2+n_3^2}}
            \begin{bmatrix} 
                -n_3 & n_1 n_2 \\
                 n_2 & n_1 n_3
            \end{bmatrix}.
\end{equation*}
If $n_1 = 0$, then if $n_2 \neq 0$, take $(\mathbf{r_{I}}, \mathbf{r_{II}}) = (\mathbf{e_3}, -\mathbf{e_1})$. equation (\ref{eq:3D_T2}) holds with:
\begin{equation*}
    T^{2N} = \frac{1}{n_2^2}
            \begin{bmatrix} 
                n_2^2+n_3^2 & -n_1 n_3 \\
                -n_1 n_3 & n_2^2+n_1^2
            \end{bmatrix}.
\end{equation*}
Since $n_1 = 0$, this simplifies to $T^{2N} = diag([1+(n_3/n_2)^2 \ 1])$ and $T^{N} = diag([\sqrt{1 + (n_3/n_2)^2} \  1])$. If $n_1 = 0$ and $n_2 = 0$, then $n_3 \neq 0$. In this final case, take $(\mathbf{r_{I}}, \mathbf{r_{II}}) = (\mathbf{e_2}, \mathbf{e_1})$. This leads to $T^{2N} = n_3^2 diag([1 \ 1])$ and $T^{N} = n_3 diag([1 \ 1])$. \\
\indent In each case, we can write $T^{2} = \mathcal{T}\mathcal{T}^T$ with:
\begin{equation*}
    \mathcal{T} =  \sqrt{\frac{\rho}{\gamma r}}diag(\begin{bmatrix}
                \mathcal{T}^{Y} & T^{N} & 1/\sqrt{2} & 1/\sqrt{2}
            \end{bmatrix}).
\end{equation*}
The entropy-stable flux $\mathbf{f^*}$ writes:
\begin{equation*}
    \mathbf{f^*} = \mathbf{f_{EC}} - \frac{1}{2}( R \mathcal{T}) |\Lambda| (R \mathcal{T})^T [\mathbf{v}].
\end{equation*}

\section{Some perspective on the fluxes of Tadmor and Barth} \label{appendix:Barth}
In Barth \cite{Barth}, the entropy variables are discretized but the entropy stability of an inviscid interface flux $\mathbf{f^*}$ is defined by the condition that $\mathbf{f^{*}}$ produces more entropy than the so-called Symmetric Mean-Value (SMV) flux he proposed. This flux actually has two distinct expressions \cite{Barth, Barth2}. In chronological order, they are given by:
\begin{align*}
    \mathbf{f_{SMV, I}} =& \ \frac{1}{2}(\mathbf{f}(\mathbf{v}_L) + \mathbf{f}(\mathbf{v}_R)) - \frac{1}{2}\bigg( \int_{0}^1 (1 - \theta) (|A(\mathbf{\overline{v}}(\theta))|_H \ + \ |A(\mathbf{\overline{\overline{v}}}(\theta))|_H) d\theta \bigg) [\mathbf{v}], \\
    \mathbf{f_{SMV, II}} =& \ \frac{1}{2}(\mathbf{f}(\mathbf{v}_L) + \mathbf{f}(\mathbf{v}_R)) - \frac{1}{2}\bigg( \int_{0}^1  |A(\mathbf{\overline{v}}(\theta))|_H d\theta \bigg) [\mathbf{v}], \\
    \mathbf{\overline{v}}(\theta) =& \ \mathbf{v}_L + \theta [\mathbf{v}], \ \mathbf{\overline{\overline{v}}}(\theta) = \mathbf{v}_R - \theta [\mathbf{v}], \ |A|_H = |A|H = R |\Lambda| R^T.
\end{align*}
Barth \cite{Barth, Barth2} defines an entropy-stable flux by the condition that it produces more entropy than the SMV flux:
\begin{equation*}
    [\mathbf{v}] \cdot \mathbf{f^*} \leq [\mathbf{v}] \cdot \mathbf{f_{SMV}}.
\end{equation*}
The SMV flux is entropy stable, therefore it does not define a genuine threshold for entropy stability. 
Tadmor \cite{Tadmor} defines an entropy-stable flux by comparison with an entropy-conservative flux $\mathbf{f_{EC}}$ he derived. This condition writes:
\begin{equation*}
    [\mathbf{v}] \cdot \mathbf{f^*} \leq [\mathbf{v}] \cdot \mathbf{f_{EC}} = [\mathcal{F}], \ \mathbf{f_{EC}} = \int_0^1 \mathbf{f}(\mathbf{\overline{v}}(\theta)) d\theta.
\end{equation*}
Using integration by parts, this flux can be cast in viscosity form \cite{Tadmor}:
\begin{align*}
    \mathbf{f_{EC}} =& \ (\theta - \frac{1}{2}) \mathbf{f}(\mathbf{\overline{v}}(\theta)) \bigg |_{\theta = 0}^{\theta = 1} - \int_0^1 (\theta - \frac{1}{2}) \frac{d}{d\theta} \mathbf{f}(\mathbf{\overline{v}}(\theta)) d\theta \\
                               =& \ \frac{1}{2}(\mathbf{f}(\mathbf{v}_L) + \mathbf{f}(\mathbf{v}_R) ) - \frac{1}{2} \bigg( \int_{0}^1 (2 \theta - 1) A(\mathbf{\overline{v}}(\theta)) H(\mathbf{\overline{v}}(\theta)) d\theta \bigg) [\mathbf{v}].
\end{align*}
Let $Q_{SMV}$ and $Q_{EC}$ be the matrices such that:
\begin{equation*}
    \mathbf{f_{EC}} = \frac{1}{2}(\mathbf{f}(\mathbf{v}_L) + \mathbf{f}(\mathbf{v}_R) ) - \frac{1}{2} Q_{EC} [\mathbf{v}], \ 
    \mathbf{f_{SMV}} = \frac{1}{2}(\mathbf{f}(\mathbf{v}_L) + \mathbf{f}(\mathbf{v}_R) ) - \frac{1}{2} Q_{SMV} [\mathbf{v}].
\end{equation*}
Using:
\begin{equation*}
    [\mathbf{v}] \cdot AH [\mathbf{v}] = [\mathbf{v}] \cdot R \Lambda R^{T} [\mathbf{v}] \leq  [\mathbf{v}] \cdot R |\Lambda| R^{T} [\mathbf{v}] = [\mathbf{v}] \cdot |A|_{H} [\mathbf{v}], 
\end{equation*}
it can be easily shown that:
\begin{equation*}
    [\mathbf{v}] \cdot Q_{SMV, II} [\mathbf{v}] \geq [\mathbf{v}] \cdot Q_{EC} [\mathbf{v}] \ \rightarrow [\mathbf{v}] \cdot \mathbf{f_{SMV, II}} \leq [\mathbf{v}] \cdot \mathbf{f_{EC}}. 
\end{equation*}
Note that a flux similar to $\mathbf{f_{SMV, II}}$ was hinted at by Tadmor (Example 3.2 in \cite{Tadmor1}). Noting that $\mathbf{\overline{v}}(\theta) = \mathbf{\overline{\overline{v}}}(1-\theta)$, and applying the variable change $\theta \leftarrow 1 - \theta$ to half of $Q_{EC}$ leads to another expression for $Q_{EC}$:
\begin{align*}
    Q_{EC} = \int_{0}^1 (\theta - \frac{1}{2}) A(\mathbf{\overline{v}}(\theta)) H(\mathbf{\overline{v}}(\theta)) d\theta + 
                \int_{0}^1 (\frac{1}{2} - \theta) A(\mathbf{\overline{\overline{v}}}(\theta)) H(\mathbf{\overline{\overline{v}}}(\theta)) d\theta,
\end{align*}
which resembles $Q_{SMV, I}$. From here, it is not clear whether
\begin{equation*}
    [\mathbf{v}] \cdot Q_{EC} [\mathbf{v}] \leq [\mathbf{v}] \cdot Q_{SMV, I} [\mathbf{v}],
\end{equation*}
which would imply:
\begin{equation*}
    [\mathbf{v}] \cdot \mathbf{f_{SMV, I}} \leq [\mathbf{v}] \cdot \mathbf{f_{EC}}.
\end{equation*}
In a later publication \cite{BarthMHD}, Barth defines an EC/ES condition:
\begin{equation}\label{eq:ECfluxBarth}
    [\mathbf{v}] \cdot (\mathbf{f^*} - \mathbf{f}(\mathbf{v}(\theta)) ) \leq 0, \ \forall \theta \in [0 \ 1],
\end{equation}
which is slightly different from Tadmor's:
\begin{equation}\label{eq:ECfluxTadmor}
    [\mathbf{v}] \cdot \mathbf{f^*} - [\mathbf{\mathcal{F}}] \leq 0.
\end{equation}
However using the definition of the potential function $\mathcal{F}$:
\begin{equation*}
    \frac{\partial \mathcal{F}}{\partial \mathbf{v}}^T  = \mathbf{f}
\end{equation*}
and a corollary of the fundamental theorem of calculus:
\begin{equation*}
    [\mathbf{\mathcal{F}} ] = [\mathbf{v}] \cdot \bigg (\int_{0}^1 \frac{\partial \mathcal{F}}{\partial \mathbf{v}}^T (\mathbf{\overline{v}}(\theta))d\theta \bigg).
\end{equation*}
Tadmor's condition (\ref{eq:ECfluxTadmor}) can be rewritten as:
\begin{equation}\label{eq:ECfluxTadmor2}
    \int_0^1 [\mathbf{v}] \cdot (\mathbf{f}^* - \mathbf{f}(\mathbf{\overline{v}}(\theta))) d\theta \leq 0,
\end{equation}
which is the requirement that some path-integral is negative. Barth's condition (\ref{eq:ECfluxBarth}) is the more stringent requirement that the integrand in (\ref{eq:ECfluxTadmor2}) is negative. 


\begin{thebibliography}{9}
\bibitem{Friedrichs}
Friedrichs, K. O., and Lax, P. D. : Systems of Conservation Equations with a Convex Extension, P. Natl. Acad. Sci. U.S.A, 68 (8) pp. 1686-1688, 1971.

\bibitem{Lax}
Lax, P. : Shock Waves and Entropy. Contributions to Nonlinear Functional Analysis, pp. 603–634, 1971. 

\bibitem{Harten}
Harten A. : On the symmetric form of systems of conservation laws with entropy, J. Comput. Phys., 49 (1) pp. 151-164 , 1983.

\bibitem{Mock}
Mock, M. S. : Systems of conservation laws of mixed type, J. Hyperbol. Differ. Eq., 70 (1) pp. 70-88, 1980.

\bibitem{NES1}
Chiodaroli, E., A counterexample to well-posedness of entropy solutions to the compressible euler system, J. Hyperbol. Differ. Eq., 11, pp. 493-519, 2014.

\bibitem{NES2}
de Lellis, D., and Szekelyhidi, L. : On admissibility criteria for weak solutions of the euler equations, Arch. Ration. Mech. An., 195, pp. 225-260, 2010.

\bibitem{NES3}
Elling, V., The carbuncle phenomenon is incurable, Acta Math. Sci., 29 (6), pp. 1647-1656, 2009.

\bibitem{Hughes}
Hughes, T.J. R., Franca, L. P., and Mallet, M. : A new finite element formulation for computational fluid dynamics: I. Symmetric forms of the compressible Euler and Navier-Stokes equations and the second law of thermodynamics, Comput. Method. Appl. M., 54 (2) pp. 223-234, 1986.

\bibitem{Reed}
Reed, W. H., and Hill, T. R. : Triangular mesh methods for the neutron transport equation, Technical Report LA-UR-73-479, Los Alamos Scientific Laboratory, 1973.

\bibitem{Cockburn}
Cockburn B., Karniadakis G.E., and Shu C.W. : The Development of Discontinuous Galerkin Methods. In: Discontinuous Galerkin Methods. Lecture Notes in Computational Science and Engineering, vol 11. Springer, Berlin, Heidelberg, 2000.

\bibitem{ShuOsher}
Shu, C.W., and Osher, S. : Efficient implementation of essentially non-oscillatory shock capturing schemes, J. Comput. Phys., 77 (2) pp. 439-471, 1988.

\bibitem{Giovangigli}
Giovangigli, V. : Multicomponent flow modeling, Chapters 1 and 8, Birkhauser, Boston, 1999.

\bibitem{Giovangigli_Mat}
Giovangigli, V., and Matuszewski, L. : Structure of Entropies in Dissipative Multicomponent Fluids, Kin. Rel. Models, 6 pp. 373-406, 2013.

\bibitem{Chalot}
Chalot, F., Hughes, T.J.R. and Shakib, F. : Symmetrization of conservation laws with entropy for high-temperature hypersonic computations, Comput. Syst. Engrg., 1 (2-4) pp. 495-521, 1990.

\bibitem{Tadmor}
Tadmor, E. : The numerical viscosity of entropy stable schemes for systems of conservation laws. I, Math. Comput., 49 pp. 91-103, 1987.

\bibitem{Tadmor1}
Tadmor, E. : The entropy dissipation by numerical viscosity in nonlinear conservative difference schemes, in: Nonlinear Hyperbolic Problems. Lecture Notes in Mathematics, 1270. Springer, Berlin, Heidelberg, 1987.

\bibitem{Tadmor2}
Tadmor, E. : A minimum entropy principle in the gas dynamics equations, Appl. Numer. Math., 2 (3-5) pp. 211-219, 1986.

\bibitem{Tadmor_acta}
Tadmor, E. : Entropy stability theory for difference approximations of nonlinear conservation laws and related time-dependent problems, Acta Numer., 12, pp. 451–512, 2003

\bibitem{Zhong}
Zhong, W., and Tadmor, E. : Entropy Stable Approximations of Navier-Stokes equations with no artificial numerical viscosity, J. Hyperbol. Differ. Eq., 3 (3) pp. 529-559, 2006.

\bibitem{Guermond_IDP}
Guermond, J.L., Popov, B. : Invariant Domain and First-Order Continuous Finite Element Approximation for hyperbolic systems, SIAM J. Numer. Anal., 54 (4), pp. 2466-2489, 2016.

\bibitem{Guermond_IDP2}
Guermond, J.L., Nazarov, M., Popov, B., Tomas, I. : Second-order invariant domain preserving approximation of the Euler equations using convex limiting, SIAM J. Sci. Comput., 40 (5), pp. 3211–3239, 2018.

\bibitem{Guermond_speed}
Guermond, J.L., and Popov, B. : Fast estimation from above of the maximum wave speed in the Riemann problem for the Euler equations, J. Comput. Phys., 328 pp. 908-926, 2016.

\bibitem{Roe}
Roe, P. L. : Approximate Riemann Solvers, Parameter vectors, and difference schemes, J. Comput. Phys., 43 (2) pp. 357-372, 1981.

\bibitem{Roe1}
Roe, P.L. : Affordable, entropy consistent flux functions. In: Eleventh International Conference on Hyperbolic Problems: Theory, Numerics and Applications, 2006.

\bibitem{Merriam}
Merriam, M. : An Entropy-Based Approach to Nonlinear Stability, NASA Technical Memorandum 101086, 1989.

\bibitem{Godunov}
Godunov, S.K. : An interesting class of quasilinear systems, Dokl. Akad. Nauk. SSSR, 139 pp. 521-523, 1961.


\bibitem{Barth}
Barth, T.J. : Numerical Methods for Gasdynamic Systems on Unstructured Meshes. In: Kröner D., Ohlberger M., Rohde C. (eds) An Introduction to Recent Developments in Theory and Numerics for Conservation Laws. Lecture Notes in Computational Science and Engineering, vol 5. Springer, Berlin, Heidelberg, 1999.

\bibitem{Barth2}
Barth, T.J. : Simplified Discontinuous Galerkin Methods for Systems of Conservation Laws with Convex Extension. In: Cockburn B., Karniadakis G.E., Shu CW. (eds) Discontinuous Galerkin Methods. Lecture Notes in Computational Science and Engineering, vol 11. Springer, Berlin, Heidelberg, 2000.

\bibitem{BarthMHD}
Barth, T.J. : On the Role of Involutions in the Discontinuous Galerkin Discretization of Maxwell and Magnetohydrodynamic Systems, in: Arnold D.N., Bochev P.B., Lehoucq R.B., Nicolaides R.A., Shashkov M. (eds) Compatible Spatial Discretizations. The IMA Volumes in Mathematics and its Applications, vol 142. Springer, New York, 2006.

\bibitem{Ismail}
Ismail, F., and Roe, P.L. : Affordable, entropy-consistent Euler flux functions II: Entropy production at shocks, J. Comput. Phys., 228 (15) pp. 5410-5436, 2009.

\bibitem{IsmailThesis}
Ismail, F. : Toward a reliable prediction of shocks in hypersonic flow: resolving carbuncles with entropy and vorticity control, PhD thesis, University of Michigan, 2006. 

\bibitem{Fjordholm}
Fjordholm, U.S., Mishra, S., and Tadmor, E. : Arbitrarily High-order Accurate Entropy Stable Essentially Nonoscillatory Schemes for Systems of Conservation Laws, SIAM J. Numer. Anal., 50 (2) pp. 544-573, 2012.

\bibitem{Fjordholm2}
Fjordholm, U.S., Mishra, S., and Tadmor, E. : ENO  Reconstruction and ENO Interpolation Are Stable, Found. Comput. Math., 13 (2) pp. 139-159, 2012.

\bibitem{LeFloch}
LeFloch, P.G., Mercier, J. M., and Rhode, C. : Fully Discrete, Entropy Conservative Schemes of Arbitrary Order, SIAM J. Numer. Anal., 40 (5) pp. 1968-1992, 2002.

\bibitem{HartenENO}
Harten, A., Engquist, B., Osher, S., and Chakravarthy, S. : Uniformly high order essentially non-oscillatory schemes III, J. Comput. Phys., 71 (2) pp. 231-303, 1987.

\bibitem{Osher}
Osher, S. : Riemann Solvers, the Entropy Condition, and Difference, SIAM J. Numer. Anal., 21 (2) pp. 217–235, 1984.

\bibitem{Abgrall0}
Abgrall, R. : Généralisation du solveur de Roe pour le calcul d’écoulements de mélanges de gaz parfaits à concentrations variables. La Recherche Aérospatiale, 6, pp 31–43, 1988.

\bibitem{Billet}
Billet, G., and Abgrall, R. : An adaptive shock-capturing algorithm for solving unsteady reactive flows, Comput. Fluids, 32 (10) 1473-1495, 2003.

\bibitem{Fernandez}
Fernandez, G., and Larrouturou, B. : Hyperbolic Schemes for Multi-Component Euler Equations. In: Ballmann J., Jeltsch R. (eds) Nonlinear Hyperbolic Equations — Theory, Computation Methods, and Applications. Notes on Numerical Fluid Mechanics, vol 24. Vieweg-Teubner Verlag, 1989.

\bibitem{Larrouturou}
Larrouturou, B. : How to preserve the mass fractions positivity when computing compressible multi-component flows, J. Comput. Phys., 95 (1) 59-84, 2001.

\bibitem{Fezoui}
Larrouturou, B., and Fezoui, L. : On the equations of multi-component perfect of real gas inviscid flow, Nonlinear Hyperbolic Problems, pp.69-98, 1989.

\bibitem{Habbal}
Habbal, A., Dervieux, A., Guillard, H., and Larrouturou, B. : Explicit calculation of reactive flows with an upwind finite element hydro-dynamical code. Technical Report 690, INRIA, 1987.

\bibitem{Abgrall}
Abgrall, R., and Karni, S. : Computations of Compressible Multifluids, J. Comput. Phys., 169 (2) pp. 594-623, 2000.

\bibitem{Karni}
Karni, S. : Multicomponent flow calculation by a consistent primitive algorithm. J. Comput. Phys., 112 (1) pp. 31-43, 1994.

\bibitem{Abgrall1}
Abgrall, R. : How to prevent pressure oscillations in multicomponent flow calculations: a quasi-conservative approach. J. Comput. Phys., 125 (1) pp. 150-160, 1996.

\bibitem{Richtmyer}
Richtmyer, R.D. : Taylor instability in shock acceleration of compressible fluids, Comm. Pure Appl. Math., 13 (2) pp. 297-319, 1960.

\bibitem{Meshkov}
Meshkov, E. E. : Instability of the interface of two gases accelerated by a shock wave, Fluid Dynam., 4 (5) pp. 101-104, 1969.

\bibitem{Picone}
Picone, J. M., and Boris, J. P. : Vorticity generation by shock propagation through bubbles in a gas, J. Fluid Mech., 189 pp. 23-51, 1988.

\bibitem{Marble}
Marble, F. E, Hendricks, G.J., and Zukoski, E. : Progress towards shock enhancement of supersonic combustion processes, 23rd Joint Propulsion Conference, 1987.

\bibitem{Quirk}
Karni, S., and Quirk, J.J. : On the dynamics of a shock-bubble interaction, J. Fluid Mech., 318 pp. 129-163, 1996.

\bibitem{Quirk_carbuncle}
Quirk, J.J. : A contribution to the great Riemann solver debate, Int. J. Numer. Fl., 18 pp. 555-574, 1994.

\bibitem{SBI_Kawai}
Kawai, S., and Terashima, H. : A high-resolution scheme for compressible multicomponent flows with shock waves, Int. J. Numer. Fl., 66 (10) pp. 1207-1225, 2011.

\bibitem{SBI_Haas}
Haas, J.F., and Sturtevant, B. : Interactions of weak shock waves with cylindrical and spherical gas inhomogeneities, J. Fluid Mech., 181 pp. 41-76, 1987.

\bibitem{SBI_Marquina}
Marquina, A., and Mulet, P. : A flux-split algorithm applied to conservative models for multicomponent compressible flows, J. Comput. Phys.,
185 (1) pp. 120-138, 2003.

\bibitem{SBI_Johnsen}
Johnsen, E., and Colonius, T. : Implementation of WENO schemes in compressible multicomponent flow problems, J. Comput. Phys., 219 (2) pp. 715-732, 2006.

\bibitem{Jameson}
Jameson, A. : Formulation  of  kinetic  energy  preserving  conservative  schemes  for  gas  dynamics and direct numerical simulation of one-dimensional viscous compressible flow in a shock tube using entropy and kinetic energy preserving schemes, J. Sci. Comput., 34(2) pp. 188–208, 2008.

\bibitem{Chandrasekhar}
Chandrasekhar, P. : Kinetic energy preserving and entropy stable finite volume schemes for compressible Euler and Navier–Stokes equations, Commun. Comput. Phys., 14 pp. 1252–1286, 2013.

\bibitem{Subbareddy}
Subbareddy, P., and Candler, G.V. : A fully discrete, kinetic energy consistent finite-volume scheme for compressible flows, J. Comput. Phys., 228 (5) pp. 1347-1364, 2009.



\bibitem{Derigs}
Derigs, D., Winters, A. R., Gassner, G. J., and Walch, S. : A novel high-order, entropy stable, 3D AMR MHD solver with guaranteed positive pressure, J. Comput. Phys., 317 pp. 223-256, 2016.


\bibitem{Ihme}
Ma, P.C., Lv, Y., and Ihme, M. : An entropy-stable hybrid scheme for simulations of transcritical real-fluid flows, J. Comput. Phys., 340 pp. 330-357, 2017.


\bibitem{Castro}
Castro, M.J., Fjordholm, U.S., Mishra, S. and Parés, C. : Entropy conservative and entropy stable schemes for nonconservative hyperbolic systems, SIAM J. Numer. Anal., 51(3) pp. 1371-1391, 2012.

\bibitem{Gouasmi1}
Gouasmi, A., Murman, S.M., and Duraisamy, K. : Entropy conservative schemes and the receding flow problem, J. Sci. Comput., 78 (2) pp. 971-994, 2018.

\bibitem{Gouasmi2}
Gouasmi, A., Duraisamy, K. and Murman, S.M. : On entropy stable temporal fluxes, arXiv:1807.03483, 2018.

\bibitem{Gouasmi4}
Gouasmi, A., Duraisamy, K. and Murman, S.M., and Tadmor, E. : A minimum entropy principle in the compressible multicomponent Euler equations, ESAIM-Math. Model. Num., \textit{accepted}, 2019.

\bibitem{MurmanCTR}
Murman, S.M., and Frontin, C. : Analysis of numerical dissipation in entropy-stable schemes for turbulent flows, Center for Turbulence Research, Proceedings of the Summer Program 2018.

\bibitem{eddy0}
Murman, S.M., Diosady, L.T., Garai, A., and Ceze, M. : A Space-Time Discontinuous-Galerkin Approach for Separated Flows, 54th AIAA Aerospace Sciences Meeting, 2016.

\bibitem{Diosady}
Diosady, L. T., and Murman, S. M. : Higher-Order Methods for Compressible Turbulent Flows Using Entropy Variables, 53rd AIAA Aerospace Sciences Meeting, 2015.

\bibitem{eddy}
Carton de Wiart, C., Diosady, L.T., Garai, A., Burgess, N., Blonigan, P., and Murman, S.M. : Design of a modular monolithic implicit solver for multi-physics applications, AIAA SciTech Forum, 2018.

\bibitem{Pazner}
Pazner, W., and Persson, P-O : Analysis and Entropy Stability of the Line Based Discontinuous Galerkin Method, J. Sci. Comput., 80 (1) pp. 376-402, 2019.

\bibitem{Fernandez_ES}
Fernandez, P., Nguyen, N-C, and Peraire, J. : Entropy-stable hybridized discontinuous Galerkin methods for the compressible Euler and Navier-Stokes equations, arXiv:1808.05066, 2018.

\bibitem{Fisher}
Fisher, T.C. and Carpenter, M.H. : High-order entropy stable finite difference schemes for nonlinear conservation laws: Finite domains, J. Comput. Phys., 252(1) pp 518-557, 2013.

\bibitem{Fried}
Friedrich, L., Schnücke, G., Winters, A.R., Del Rey Fernández, D.C., Gassner, G. J., and Carpenter, M.H. : Entropy Stable Space–Time Discontinuous Galerkin Schemes with Summation-by-Parts Property for Hyperbolic Conservation Laws, J. Sci. Comput., 2019.


\bibitem{DeepThesis}
Ray, D. : Entropy-stable finite difference and finite-volume schemes for compressible flows, PhD thesis, TIFR, 2017.

\bibitem{Hou}
Hou, T.Y., and Le Floch : Why nonconservative schemes converge to wrong solutions: error analysis, Math. Comput., 62 (206), pp. 497-530, 1994.

\bibitem{Abgrall_NC}
Abgrall, R., and Karni, S. : A comment on the computation of non-conservative products, J. Comput. Phys., 229(8), pp. 2759-2763, 2010.

\bibitem{Zakerzadeh}
Zakerzadeh, H., and, Fjordholm, U.S. : High-order accurate, fully discrete entropy stable schemes for scalar conservation laws, IMA J. Numer. Anal., 36 (2), pp. 633–654, 2016.

\bibitem{HO}
Slotnick J, Khodadoust A, Alonso J, Darmofal D, Gropp W, et al. : CFD Vision 2030 study: a path to revolutionary computational aerosciences. NASA Tech. Rep. CR-2014-218178, Langley Res. Cent., Hampton, VA.
\end{thebibliography}
\end{document}